\documentclass{amsart}
\usepackage[nobysame]{amsrefs}
\usepackage{amssymb}
\usepackage{mathrsfs}
\usepackage[usesnames,svgnames]{xcolor}
\usepackage{tikz,pgf}
	\pgfdeclarelayer{background}
	\pgfsetlayers{background,main}
\usepackage{mathpazo}
	\linespread{1.05}
\usepackage{todonotes}
\usepackage[figuresright]{rotating}
	\newenvironment{amssidewaysfigure}{\begin{sidewaysfigure}\vspace*{.5\textwidth}\begin{minipage}{\textheight}\centering}{\end{minipage}\end{sidewaysfigure}}\usepackage{enumerate}
\usepackage{subfigure}
\usepackage{etoolbox}
\usepackage{ifthen}
\usepackage[colorlinks=true,
	urlcolor=MidnightBlue,
	citecolor=DarkGreen]{hyperref}
%% Theorems
\newtheorem{theorem}{Theorem}[section]
\newtheorem{proposition}[theorem]{Proposition}

\newtheorem{lemma}[theorem]{Lemma}
\newtheorem{corollary}[theorem]{Corollary}

\newtheorem{question}[theorem]{Question}
\theoremstyle{remark}
\newtheorem{remark}[theorem]{Remark}

%% Commands

\newcommand{\ie}{\text{i.e.}\;}
\newcommand{\alert}[1]{{\color{DarkGreen}\emph{#1}}}
\newcommand{\rk}{\text{rk}}
\newcommand{\Bigll}{\Bigl(\!\Bigl(}
\newcommand{\Bigrr}{\Bigr)\!\Bigr)}
\newcommand{\colint}[2]{#1^{(#2)}}
\newcommand{\colref}[3]{\Bigll\colint{#1}{0}\;\colint{#2}{#3}\Bigrr}
\newcommand{\nc}[2]{\ifstrequal{#2}{1}{N\!C_{#1}}{N\!C_{#1}^{(#2)}}}
\newcommand{\pnc}[2]{\ifstrequal{#2}{1}{\mathcal{N\!C}_{#1}}{\mathcal{N\!C}_{#1}^{(#2)}}}
\newcommand{\DD}{\mathcal{D}}
\newcommand{\PP}{\mathcal{P}}
\newcommand{\QQ}{\mathcal{Q}}
\newcommand{\RR}{\mathcal{R}}
\renewcommand{\th}{^{\text{th}}}
\newcommand{\underindex}[2]{\underset{\underset{#1}{\uparrow}}{#2}}
%% Metadata
\author{Henri M{\"u}hle}
\address{LIX, {\'E}cole Polytechnique, 91128 Palaiseau, France}
\email{henri.muehle@lix.polytechnique.fr}
\thanks{This work is supported by a Public Grant overseen by the French National Research Agency (ANR) as part of the ``Investissements d'Avenir'' Program (Reference: ANR-10-LABX-0098), and by Digiteo project PAAGT (Nr. 2015-3161D)}
\title[Symmetric Decompositions and the Strong Sperner Property]{Symmetric Decompositions and the Strong Sperner Property for Noncrossing Partition Lattices}
\keywords{noncrossing partition lattices, well-generated complex reflection groups, symmetric chain decompositions, symmetric Boolean decompositions, strongly Sperner posets}
\subjclass[2010]{06A07 (primary), and 20F55 (secondary)}

\begin{document}

\begin{abstract}
	We prove that the noncrossing partition lattices associated with the complex reflection groups $G(d,d,n)$ for $d,n\geq 2$ admit symmetric decompositions into Boolean subposets.  As a result, these lattices have the strong Sperner property and their rank-generating polynomials are symmetric, unimodal, and $\gamma$-nonnegative.  We use computer computations to complete the proof that every noncrossing partition lattice associated with a well-generated complex reflection group is strongly Sperner, thus answering affirmatively a question raised by D.~Armstrong.
\end{abstract}

\maketitle

\section{Introduction}
	\label{sec:introduction}
The lattice $\pnc{n}{1}$ of noncrossing partitions of an $n$-element set, introduced by G.~Kreweras \cite{kreweras72sur}, is a remarkable lattice with beautiful enumerative and structural properties.  It is a pure-shellable, (locally) self-dual and complemented lattice, whose cardinality is given by the $n\th$ Catalan number; see \cites{edelman80chain,kreweras72sur} for other interesting enumerative invariants.  Its maximal chains encode parking functions of length $n-1$, and afford a natural transitive action (called the Hurwitz action) of the braid group.  The lattice $\pnc{n}{1}$ has appeared and proved to be significant in a variety of contexts, such as algebraic topology, geometry, free probability, and representation theory; see \cites{mccammond06noncrossing,simion00noncrossing} for surveys on this topic.

Noncrossing partitions can be viewed as elements of the symmetric group.  This connection was perhaps first made explicit by P.~Biane \cite{biane97some} via a map which transforms blocks of a set partition into cycles of a permutation.  An algebraization of this connection allowed for defining analogous posets for every well-generated complex reflection group \cites{bessis03dual,brady01partial,brady02artin,brady02partial,reiner97non}; this definition is recalled in Section~\ref{sec:noncrossing_partitions}.  It was shown that these posets are always lattices, but to date no uniform proof of this fact is available.  A beautiful geometric argument by T.~Brady and C.~Watt establishes this result simultaneously for all real reflection groups~\cite{brady08non}, but for the remaining well-generated complex reflection groups we still have to rely on a case-by-case verification~\cite{bessis06non,bessis15finite}.  Consequently, bearing the prototypical example of $\pnc{n}{1}$ in mind, these posets are called \alert{noncrossing partition lattices} associated with a well-generated complex reflection group $W$; denoted by $\pnc{W}{1}$.  

It is a natural question to ask which properties of $\pnc{n}{1}$ survive the transition to $\pnc{W}{1}$, and whether these properties can be proved without using the classification of irreducible well-generated complex reflection groups or not.  For instance, there is a uniform formula for the cardinality of $\pnc{W}{1}$ in terms of the degrees of $W$, see~\cite{reiner97non}*{Remark~2} and \cite{bessis15finite}*{Section~13}, but no uniform proof is available.  There are formulas for the M{\"o}bius number~\cites{athanasiadis07shellability,armstrong09generalized}, and for the number of maximal chains of $\pnc{W}{1}$~\cite{chapuy14counting,reading08chains}.  The shellability of the order complex of $\pnc{W}{1}$ was established uniformly when $W$ is a real reflection group~\cite{athanasiadis07shellability}, and case-by-case for the remaining groups~\cite{muehle15el}.  Likewise, the transitivity of the Hurwitz action of the braid group on the maximal chains of $\pnc{W}{1}$ was shown uniformly when $W$ is a real reflection group~\cite{deligne74letter}, and case-by-case for the remaining groups~\cite{bessis15finite}.  It seems that the main obstacle for extending the known uniform proofs for real reflection groups to the remaining well-generated complex reflection groups is the absence of a well-behaved root system in the latter groups\footnote{Here, the emphasis is on \emph{well-behaved}; root systems do exist for complex reflection groups.  See for instance~\cite{lehrer09unitary}*{Chapter~2, Section~6}.}.  

The main purpose of this article is to show that yet another property of $\pnc{n}{1}$, namely the \alert{strong Sperner property}, holds in $\pnc{W}{1}$ for every well-generated complex reflection group $W$.  The origin of this property goes back to a classical result of E.~Sperner, who showed in \cite{sperner28ein} that the maximum size of a family of pairwise incomparable subsets of an $n$-element set is $\tbinom{n}{\lfloor\tfrac{n}{2}\rfloor}$.  This result was generalized by P.~Erd{\H o}s, who showed that the maximum size of a $k$-family, \ie a family of subsets of an $n$-element set that does not contain $k+1$ pairwise comparable sets, is the sum of the $k$ largest binomial coefficients~\cite{erdos45on}*{Theorem~5}.  The case $k=1$ clearly yields Sperner's original result.  These notions can be easily rephrased as poset properties as follows.  Given a graded poset $\PP=(P,\leq)$, a $k$-family is a set $X\subseteq P$ that does not contain a chain of length $k+1$.  Then, $\PP$ is \alert{$k$-Sperner} if the size of a $k$-family does not exceed the sum of the $k$ largest rank numbers, and $\PP$ is \alert{strongly Sperner} if it is $k$-Sperner for all $k>0$.  Examples of strongly Sperner posets include the Boolean lattices, the lattices of divisors of an integer~\cite{bruijn51on}, and the Bruhat posets associated with parabolic quotients of finite Coxeter groups other than $H_{4}$~\cite{stanley80weyl}\footnote{This does not mean that the Bruhat poset associated with parabolic quotients of $H_{4}$ does not have the strong Sperner property, though.  The methods applied in \cite{stanley80weyl} simply did not work for $H_{4}$.}.  Some posets that in general lack the Sperner property are geometric lattices~\cite{dilworth71counterexample}, in particular lattices of set partitions of a sufficiently large set~\cites{canfield78on,jichang84superantichains,shearer79simple}.

Despite the fact that the lattice of all set partitions need not be strongly Sperner, the restriction to noncrossing set partitions does have this property.  This result was established by R.~Simion and D.~Ullman in \cite{simion91on}, by showing that the lattice $\pnc{n}{1}$ admits a decomposition into saturated chains that are symmetric about the middle ranks.  (In fact, they proved the stronger result that $\pnc{n}{1}$ can be decomposed into Boolean lattices that are symmetric about the middle ranks.)  Such a \alert{symmetric chain decomposition} (respectively \alert{symmetric Boolean decomposition}) implies the strong Sperner property, while the converse is not necessarily true~\cites{griggs80on,leclerc94finite}.  The key property to constructing a symmetric chain or Boolean decomposition for $\pnc{n}{1}$ is the fact that its intervals are isomorphic to direct products of smaller noncrossing partition lattices, and that the existence of a symmetric chain or Boolean decomposition is preserved under direct products.  Then, R.~Simion and D.~Ullman grouped the elements of $\pnc{n}{1}$ according to the smallest integer that lies in the same block as the integer $1$, and showed that the resulting decomposition is already symmetric, and can be further decomposed into symmetric chains or Boolean lattices.  See \cite{simion91on}*{Section~2} for the details. 

It is therefore reasonable to ask the following questions.

\begin{question}\label{qu:nc_sbd_scd}
	Does the lattice of $W$-noncrossing partitions always admit a symmetric decomposition into saturated chains or Boolean lattices for every well-generated complex reflection group $W$?
\end{question}

\begin{question}[See also \cite{armstrong09generalized}*{Open Problem~3.5.12}]\label{qu:nc_ssp}
	Is the lattice of $W$-noncrossing partitions always strongly Sperner for every well-generated complex reflection group $W$?
\end{question}

An affirmative answer to Question~\ref{qu:nc_sbd_scd} automatically yields an affirmative answer to Question~\ref{qu:nc_ssp}.  Strictly speaking, noncrossing partition lattices are only defined for irreducible well-generated complex reflection groups, but this definition can be extended to all well-generated complex reflection groups by taking the direct product of the noncrossing partition lattices associated with the irreducible factors of the group.  According to \cite{shephard54finite}, the irreducible well-generated complex reflection groups fall into four categories: they are either isomorphic to $G(1,1,n)$ for some $n\geq 1$, to $G(d,1,n)$ for some $d\geq 2$ and some $n\geq 1$, to $G(d,d,n)$ for some $d,n\geq 2$, or they are one of $26$ exceptional groups.  We review this classification in Section~\ref{sec:complex_reflection_groups} in more detail.  For the moment it shall suffice to note that the group $G(1,1,n)$ is isomorphic to the symmetric group of rank $n$, which yields $\pnc{G(1,1,n)}{1}\cong\pnc{n}{1}$.  D.~Bessis and R.~Corran showed in \cite{bessis06non} that $\pnc{G(2,1,n)}{1}\cong\pnc{G(d,1,n)}{1}$ for all $d\geq 2$ and all $n\geq 1$, and V.~Reiner showed in \cite{reiner97non}*{Theorem~13} that the lattice $\pnc{G(2,1,n)}{1}$ admits a symmetric chain decomposition for all $n\geq 1$, a result that was extended by P.~Hersh in \cite{hersh99deformation}*{Theorems~5~and~7} to symmetric Boolean decompositions.  Hence Questions~\ref{qu:nc_sbd_scd} and \ref{qu:nc_ssp} need to be answered for the groups $G(d,d,n)$ and each $d,n\geq 2$, as well as for the exceptional groups.  Our first main result is an affirmative answer for the groups $G(d,d,n)$.

\begin{theorem}\label{thm:gddn_sbd}
	For every $d,n\geq 2$, the lattice $\pnc{G(d,d,n)}{1}$ of $G(d,d,n)$-noncrossing partitions admits a symmetric Boolean decomposition.  Consequently, it admits a symmetric chain decomposition, it is strongly Sperner, and the sequence of its rank numbers is symmetric, unimodal, and $\gamma$-nonnegative.
\end{theorem}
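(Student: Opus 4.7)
The plan is to adapt the strategy of Simion and Ullman \cite{simion91on}, proceeding by induction on $n$. Two general principles drive the argument: first, the existence of a symmetric Boolean decomposition is preserved under direct products; second, every interval $[u,v]$ in a $W$-noncrossing partition lattice is isomorphic to a product of noncrossing partition lattices associated with the irreducible factors of the parabolic subgroup fixing the flat corresponding to $v$. Hence if I can cover $\pnc{G(d,d,n)}{1}$ by intervals whose parabolic factors are of the form $G(1,1,k)$, $G(d,1,k)$, or $G(d,d,k)$ with $k<n$, then induction combined with the results of Simion--Ullman \cite{simion91on} and Hersh \cite{hersh99deformation} equips each such interval with a symmetric Boolean decomposition.

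I would begin by fixing an explicit combinatorial model for $\pnc{G(d,d,n)}{1}$, for instance via noncrossing partitions of a regular $n$-gon whose blocks are decorated by elements of $\mathbb{Z}/d\mathbb{Z}$ subject to a zero-sum condition that cuts $G(d,d,n)$ out of $G(d,1,n)$. In such a model the parabolic subgroups that arise as interval stabilizers are precisely products of symmetric groups together with at most one extra factor of the form $G(d,1,k)$ or $G(d,d,k)$, which matches the inductive framework set up above.

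Next I would introduce an analogue of the Simion--Ullman grouping: partition the elements of $\pnc{G(d,d,n)}{1}$ according to a single combinatorial datum attached to a distinguished reference vertex, for instance the index (and root-of-unity label, where relevant) of the smallest other vertex sharing a block with the reference one. Each fibre of this statistic should be order-isomorphic to an interval of $\pnc{G(d,d,n)}{1}$---possibly after tensoring with a small Boolean factor recording the decoration---so by the previous paragraph carries a symmetric Boolean decomposition by induction. Taking the union of these local decompositions over all fibres then yields a Boolean decomposition of the whole lattice.

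The main obstacle, and the step where the specific structure of $G(d,d,n)$ really enters, is to verify that this decomposition is in fact symmetric, that is, each Boolean piece is centred about the middle rank of $\pnc{G(d,d,n)}{1}$. In the classical case of $\pnc{n}{1}$ this is almost automatic from the cyclic symmetry of $\{1,\ldots,n\}$, but for $G(d,d,n)$ the zero-sum constraint breaks part of the symmetry enjoyed by the ambient $G(d,1,n)$-model, so the fibres of the grouping statistic must be paired explicitly---most naturally via a Kreweras-type antiautomorphism of $\pnc{G(d,d,n)}{1}$---in order to demonstrate that their Boolean pieces sit at complementary ranks. Once this rank-symmetry has been established, the remaining assertions---the symmetric chain decomposition, the strong Sperner property, and the symmetry, unimodality, and $\gamma$-nonnegativity of the rank-generating polynomial---are standard consequences of a symmetric Boolean decomposition.
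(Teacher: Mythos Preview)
Your overall plan---induction on $n$, grouping elements by the image of a distinguished reference point, and using that each fibre is (essentially) an interval isomorphic to a product of smaller noncrossing partition lattices---matches the paper's approach closely.  Two points, however, separate your sketch from a working proof.

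First, a minor correction: the irreducible parabolic subgroups of $G(d,d,n)$ are of type $G(1,1,k)$ or $G(d,d,k)$ only; no factor of type $G(d,1,k)$ ever appears (see Lemma~\ref{lem:gddn_parabolic_subgroups}).  So you will not need Hersh's result for $G(d,1,k)$ in the inductive step.

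Second, and more seriously, your proposed remedy for the non-symmetry of the fibres---pairing them via a Kreweras-type antiautomorphism---does not actually work here.  When one groups by the image of $\colint{1}{0}$, the resulting fibres $R_{i}^{(s)}$ are almost all symmetric intervals, but two of them, $R_{1}^{(1)}$ and $R_{2}^{(d-1)}$, sit asymmetrically: both are isomorphic to $\pnc{G(1,1,n-2)}{1}$, with the first occupying ranks $2$ through $n-1$ and the second ranks $1$ through $n-2$ (Lemma~\ref{lem:fifth_chunk}).  These two pieces are rank-complementary, so Kreweras complementation would naturally pair them---but the induced subposet $\RR_{1}^{(1)}\uplus\RR_{2}^{(d-1)}$ is \emph{disconnected} in $\pnc{G(d,d,n)}{1}$ for $n>2$ (see the discussion following Lemma~\ref{lem:fifth_chunk} and Figure~\ref{fig:g553_nc}).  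Hence merely pairing them does not produce a subposet admitting a symmetric Boolean decomposition; there is no way to glue Boolean pieces across the two halves.

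The paper's fix is different and more delicate: one removes two specific sub-intervals $E_{1}$ and $E_{2}$ from a \emph{third} fibre, namely $R_{n}^{(d-1)}$, and grafts them onto $R_{1}^{(1)}$ and $R_{2}^{(d-1)}$ respectively.  The resulting pieces $D_{1}=R_{1}^{(1)}\uplus E_{1}$ and $D_{2}=R_{2}^{(d-1)}\uplus E_{2}$ are each isomorphic to $\mathbf{2}\times\pnc{G(1,1,n-2)}{1}$ and now sit symmetrically (Lemma~\ref{lem:chunk_rearrangement}), while the leftover part $D=R_{n}^{(d-1)}\setminus(E_{1}\uplus E_{2})$ is handled by a direct appeal to the classical Simion--Ullman decomposition (Lemma~\ref{lem:middle_chunk}).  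This redistribution of elements among fibres is the missing idea in your sketch.
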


Our proof of Theorem~\ref{thm:gddn_sbd} uses an idea similar to the aforementioned construction of a symmetric chain decomposition of $\pnc{n}{1}$ due to R.~Simion and D.~Ullman.  We view the elements of $\pnc{G(d,d,n)}{1}$ as permutations of a set consisting of $n$ integers each occurring in $d$ different colors, and we group them according to the image of the first integer in the first color.  A careful analysis of this decomposition shows that it produces parts that sit not symmetrically in $\pnc{G(d,d,n)}{1}$.  We overcome this issue by slightly modifying this decomposition, and eventually prove Theorem~\ref{thm:gddn_sbd} by induction.  We remark that according to \cite{athanasiadis04noncrossing}, E.~Tzanaki previously constructed a symmetric chain decomposition of $\pnc{G(2,2,n)}{1}$, which unfortunately did not appear in print.

A consequence of our decomposition of $\pnc{G(d,d,n)}{1}$ is a recursive formula for its rank vector, see Proposition~\ref{prop:gddn_rank_recursion}.  If we had an explicit formula for this vector, we could derive an explicit formula for the corresponding $\gamma$-vector, and since Theorem~\ref{thm:gddn_sbd} asserts that this vector consists of nonnegative integers, it is a natural question to ask for combinatorial interpretations of these numbers.  A combinatorial interpretation for the entries of the $\gamma$-vector of $\pnc{n}{1}$ is for instance given in \cite{petersen15eulerian}*{Section~4.3}.  See also \cite{petersen15eulerian}*{Section~4.7} for more background on $\gamma$-vectors and symmetric Boolean decompositions.

For the exceptional irreducible well-generated complex reflection groups, we approach Questions~\ref{qu:nc_sbd_scd} and \ref{qu:nc_ssp} with the help of a computer.  The main obstacle in answering Question~\ref{qu:nc_sbd_scd} is that it is extremely difficult from a computational perspective; it essentially amounts to checking certain properties for each antichain of the poset.  We managed to complete this computation for almost all exceptional groups of rank at most $4$.  A priori, an answer to Question~\ref{qu:nc_ssp} is equally difficult, however we used a decomposition argument (Proposition~\ref{prop:strongly_sperner_rank_removal}) to simplify the computation.  This enabled us to use \textsc{Sage} \cites{sage,sagecombinat} to provide an affirmative answer to Question~\ref{qu:nc_ssp}.

\begin{theorem}\label{thm:nc_strongly_sperner}
	Let $W$ be a well-generated complex reflection group.  The lattice $\pnc{W}{1}$ of $W$-noncrossing partitions is strongly Sperner, and the sequence of its rank numbers is symmetric, unimodal and $\gamma$-nonnegative.
\end{theorem}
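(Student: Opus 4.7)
The plan is to combine the Shephard--Todd classification of irreducible well-generated complex reflection groups with the previously known and newly established results, reducing the claim to a computer-verifiable check for the exceptional groups. First, observe that if $W$ is reducible then $\pnc{W}{1}$ is by definition the direct product of the noncrossing partition lattices of its irreducible factors; since the existence of a symmetric Boolean decomposition (and of a symmetric chain decomposition) is preserved under direct products, and since strong Spernerness, symmetry, unimodality, and $\gamma$-nonnegativity of the rank-generating polynomial are all preserved by taking products of symmetric chain (or Boolean) decompositions, it suffices to treat the irreducible case.

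Next I would run through the four families from Shephard--Todd. The family $G(1,1,n)$ recovers the classical lattice $\pnc{n}{1}$, which admits a symmetric Boolean decomposition by Simion--Ullman \cite{simion91on}. For the family $G(d,1,n)$ with $d\geq 2$, the isomorphism $\pnc{G(d,1,n)}{1}\cong\pnc{G(2,1,n)}{1}$ of Bessis--Corran together with Hersh's symmetric Boolean decomposition of $\pnc{G(2,1,n)}{1}$ settles the case. For the family $G(d,d,n)$ with $d,n\geq 2$, Theorem~\ref{thm:gddn_sbd} just proved supplies a symmetric Boolean decomposition, hence a symmetric chain decomposition, and hence all the desired consequences on the rank-generating polynomial.

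The remaining irreducible cases are the $26$ exceptional well-generated complex reflection groups. For these I would turn to a computer calculation in \textsc{Sage}. Rather than searching directly for symmetric chain or Boolean decompositions (which is computationally prohibitive in general, since it essentially amounts to a search over antichains), I would invoke the rank-removal decomposition argument of Proposition~\ref{prop:strongly_sperner_rank_removal}: this lets one certify the strong Sperner property from a much cheaper combinatorial check on the rank sizes and on certain covering data. The symmetry of the rank numbers in each $\pnc{W}{1}$ is known from the self-duality of the lattice, and unimodality plus $\gamma$-nonnegativity can be read off directly from the tabulated rank-generating polynomials for the finitely many exceptional groups.

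The main obstacle is genuinely the exceptional case: strong Spernerness is \emph{a priori} an expensive property to verify, and an exhaustive antichain search is out of reach already in rank $4$. The crucial input that makes the proof go through is Proposition~\ref{prop:strongly_sperner_rank_removal}, which replaces the antichain search by a much smaller inductive computation; with this in hand the remaining work is a finite, mechanical enumeration, while all uniform families are covered by Theorem~\ref{thm:gddn_sbd} and the prior results recalled above.
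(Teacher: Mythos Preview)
Your overall strategy matches the paper's: handle the three infinite families via symmetric Boolean decompositions (Simion--Ullman, Hersh, and Theorem~\ref{thm:gddn_sbd}), handle the exceptional groups by computer using Proposition~\ref{prop:strongly_sperner_rank_removal}, and read off rank-symmetry, rank-unimodality and $\gamma$-nonnegativity from the tabulated rank vectors.

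There is, however, a genuine gap in your reduction to the irreducible case. You justify the reduction by saying that symmetric chain and Boolean decompositions are preserved under direct products, and that the desired properties then follow. But for the exceptional groups you do \emph{not} establish a symmetric chain or Boolean decomposition; you only certify the strong Sperner property via Proposition~\ref{prop:strongly_sperner_rank_removal}. So if $W$ has an exceptional irreducible factor, your argument gives no mechanism for transporting strong Spernerness to the product $\pnc{W}{1}$. This matters, because the strong Sperner property by itself is \emph{not} preserved under direct products.

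The paper closes this gap differently: after establishing, for each irreducible factor, that $\pnc{W_i}{1}$ is strongly Sperner, rank-symmetric, and rank-unimodal, it invokes a product theorem (Proctor, \cite{proctor80product}*{Theorem~3.2}) asserting that these three properties together \emph{are} stable under direct products. Rank-$\gamma$-nonnegativity of the product is then obtained from the multiplicativity of rank-generating polynomials and the fact that products of $\gamma$-nonnegative polynomials remain $\gamma$-nonnegative. You should replace your ``SCD is preserved under products'' reduction by this argument.
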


In his thesis~\cite{armstrong09generalized}, D.~Armstrong generalized the noncrossing partition lattice associated with a Coxeter group $W$ by adding a parameter $m$ so that one obtains a certain partial order on the multichains of length $m$ of $\pnc{W}{1}$, and this construction naturally extends to well-generated complex reflection groups.  Let us denote the resulting poset by $\pnc{W}{m}$.  Among many other things, he posed the question whether these \alert{$m$-divisible noncrossing partition posets} are strongly Sperner for any positive integer $m$, see \cite{armstrong09generalized}*{Open Problem~3.5.12}.  Our Theorem~\ref{thm:nc_strongly_sperner} yields an affirmative answer for the case $m=1$, but it remains open for $m\geq 2$.

\smallskip

The rest of the article is organized as follows.  In Section~\ref{sec:preliminaries} we recall the necessary notions, in particular the definitions of strongly Sperner posets and symmetric decompositions (Section~\ref{sec:posets}), complex reflection groups (Section~\ref{sec:complex_reflection_groups}), and noncrossing partitions (Section~\ref{sec:noncrossing_partitions}).  In Section~\ref{sec:decomposition} we prove Theorem~\ref{thm:gddn_sbd}, and the proof of Theorem~\ref{thm:nc_strongly_sperner} is assembled in Section~\ref{sec:remaining_cases}.

\section{Preliminaries}
	\label{sec:preliminaries}
In this section we recall the necessary definitions that we use in this article.  For further background on partially ordered sets we recommend \cite{davey02introduction}, an excellent introduction to the Sperner property and related subjects is \cite{anderson02combinatorics}.  An extensive textbook on complex reflection groups is \cite{lehrer09unitary}, and a recent exposition on Coxeter elements is \cite{reiner14on}.  Throughout the paper we use the abbreviation $[n]=\{1,2,\ldots,n\}$ for an integer $n$, and we consider only finite posets.

\subsection{Partially Ordered Sets}
	\label{sec:posets}
Let $\PP=(P,\leq)$ be a partially ordered set (\alert{poset} for short).  Given two elements $p,q\in P$, we say that $q$ \alert{covers} $p$ if $p<q$ and there exists no $x\in P$ with $p<x<q$.  In this case, we also say that $p$ \alert{is covered by} $q$ or that $p$ and $q$ \alert{form a covering}, and we usually write $p\lessdot q$.  If $\PP$ has a least element, say $\hat{0}$, then every $p\in P$ with $\hat{0}\lessdot p$ is an \alert{atom} of $\PP$.  Dually, if $\PP$ has a greatest element, say $\hat{1}$, then every $p\in P$ with $p\lessdot\hat{1}$ is a \alert{coatom} of $\PP$.  A \alert{closed interval} of $\PP$ is a subset of $P$ that can be written in the form $[p,q]=\{x\in P\mid p\leq x\leq q\}$ for some $p,q\in P$ with $p\leq q$.  A \alert{chain} of $\PP$ is a subset of $P$ that can be written as $C=\{p_{1},p_{2},\ldots,p_{s}\}$ such that $p_{1}<p_{2}<\cdots<p_{s}$, and the length of a chain is its cardinality minus one.  A chain is \alert{saturated} if it is a sequence of coverings.  A saturated chain is \alert{maximal} if it contains a minimal and a maximal element of $\PP$.  A poset is \alert{graded} if all maximal chains have the same length, which we call the \alert{rank} of $\PP$ and denote by $\rk(\PP)$.  We can now define the \alert{rank function} of $\PP$ by
\begin{displaymath}
	\rk:P\to\mathbb{N},\quad x\mapsto\begin{cases}0, & \text{if}\;x\;\text{is a minimal element},\\\rk\bigl([m,x]\bigr), & \text{otherwise, for some minimal element}\;m<x.\end{cases}
\end{displaymath}
A \alert{lattice} is a poset in which any two elements have a least upper bound and a greatest lower bound.  

Given two posets $\PP=(P,\leq_{P})$ and $\QQ=(Q,\leq_{Q})$, their \alert{direct product} is the poset $\PP\times\QQ=(P\times Q,\leq)$, where $(p_{1},q_{1})\leq (p_{2},q_{2})$ if and only if $p_{1}\leq_{P}p_{2}$ and $q_{1}\leq_{Q}q_{2}$.  

%If we denote disjoint set union by $\uplus$, then, by abuse of notation, the \alert{disjoint union} is the poset $\PP\uplus\QQ=(P\uplus Q,\leq)$, where $x\leq y$ if and only if either $x,y\in P$ and $x\leq_{P}y$ or $x,y\in Q$ and $x\leq_{Q}y$.  

A \alert{decomposition} of $\PP=(P,\leq)$ is a partition of $P$ with the property that if $D$ is a part of this partition, then $D$ cannot be written as a disjoint union of two or more nonempty subposets of $\PP$, and each cover relation in $(D,\leq)$ is a cover relation in $\PP$.  A decomposition of a graded poset is \alert{symmetric} if for each part $D$ there is a bijection from the minimal elements of $(D,\leq)$ to the maximal elements of $(D,\leq)$, and if $p$ is a minimal with corresponding maximal element $q$, then $\rk(p)+\rk(q)=\rk(\PP)$.  A \alert{symmetric chain decomposition} is a symmetric decomposition of $\PP$ into chains, and a \alert{symmetric Boolean decomposition} is a symmetric decomposition of $\PP$ into Boolean lattices.  See Figure~\ref{fig:poset_decompositions} for some examples.  Observe that the poset in Figure~\ref{fig:poset_decomposition_3} does not admit a symmetric Boolean decomposition, since it does not have enough elements.  The following observation follows from the fact that each Boolean lattice admits a symmetric chain decomposition.

\begin{proposition}[\cite{petersen13on}*{Observation~10}]\label{prop:sbd_implies_scd}
	A graded poset that admits a symmetric Boolean decomposition also admits a symmetric chain decomposition.
\end{proposition}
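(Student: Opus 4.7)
The plan is to refine the given symmetric Boolean decomposition by replacing each Boolean part with a symmetric chain decomposition (SCD) of that Boolean lattice, and then to check that the resulting chains are still symmetric in the ambient poset $\PP$.

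The key ingredient is the classical theorem of de Bruijn, Tengbergen, and Kruyswijk that every Boolean lattice $B_{k}$ admits an SCD, i.e.\ a partition $B_{k}=\bigsqcup_{j}C_{j}$ into saturated chains whose minimum sits at internal rank $r_{j}$ and whose maximum sits at internal rank $k-r_{j}$ for some $0\leq r_{j}\leq k/2$. I would either invoke this directly or recall the standard parenthesis-matching argument on subsets of $[k]$; either way, this is the only nontrivial input to the whole proof.

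Now write $\PP=(P,\leq)$ as graded of rank $n$, and let $P=\bigsqcup_{i}D_{i}$ be the assumed symmetric Boolean decomposition. For each $i$ let $\hat{0}_{i}$ and $\hat{1}_{i}$ denote the minimum and maximum of $D_{i}$; then $D_{i}\cong B_{k_{i}}$ with $k_{i}=\rk(\hat{1}_{i})-\rk(\hat{0}_{i})$, and by the symmetry hypothesis $\rk(\hat{0}_{i})+\rk(\hat{1}_{i})=n$. Applying the $B_{k_{i}}$-SCD inside $D_{i}$ produces saturated chains whose minimum and maximum have internal ranks $r$ and $k_{i}-r$, hence absolute ranks $\rk(\hat{0}_{i})+r$ and $\rk(\hat{0}_{i})+(k_{i}-r)=\rk(\hat{1}_{i})-r$ in $\PP$, summing to $n$. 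So each chain is symmetric about the middle rank of $\PP$.

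Finally, saturation transfers: the definition of a decomposition requires that each covering in $(D_{i},\leq)$ is a covering in $\PP$, so saturated chains in $D_{i}$ remain saturated in $\PP$. Collecting these chains over all $i$ yields the desired symmetric chain decomposition of $\PP$. There is essentially no obstacle beyond the classical SCD for $B_{k}$; the symmetry and saturation conditions transfer from each $D_{i}$ to $\PP$ immediately from the definitions.
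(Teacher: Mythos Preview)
Your argument is correct and is exactly the approach indicated in the paper: the paper does not give a detailed proof but simply remarks that the observation follows from the fact that each Boolean lattice admits a symmetric chain decomposition. Your write-up spells this out precisely, including the rank computation showing that symmetry transfers and the observation that saturation is preserved because cover relations in each part are cover relations in $\PP$.
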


\begin{figure}
	\centering
	\subfigure[A decomposition of a graded poset into three parts.]{\label{fig:poset_decomposition_1}
		\begin{tikzpicture}\small
			\def\x{1};
			\def\y{.75};
			\def\s{.67};
			\draw(.5*\x,2*\y) node{};
			\draw(3.5*\x,2*\y) node{};
			\draw(2*\x,1*\y) node[draw,circle,scale=\s](n1){};
			\draw(1*\x,2*\y) node[draw,circle,scale=\s](n2){};
			\draw(2*\x,2*\y) node[draw,circle,scale=\s](n3){};
			\draw(3*\x,2*\y) node[draw,circle,scale=\s](n4){};
			\draw(1*\x,3*\y) node[draw,circle,scale=\s](n5){};
			\draw(1.75*\x,3*\y) node[draw,circle,scale=\s](n6){};
			\draw(2.25*\x,3*\y) node[draw,circle,scale=\s](n7){};
			\draw(3*\x,3*\y) node[draw,circle,scale=\s](n8){};
			\draw(1*\x,4*\y) node[draw,circle,scale=\s](n9){};
			\draw(2*\x,4*\y) node[draw,circle,scale=\s](n10){};
			\draw(3*\x,4*\y) node[draw,circle,scale=\s](n11){};
			\draw(2*\x,5*\y) node[draw,circle,scale=\s](n12){};
			\draw(n1) -- (n2) -- (n6) -- (n11) -- (n12);
			\draw(n1) -- (n4) -- (n7) -- (n9) -- (n12);
			\draw(n1) -- (n3) -- (n6) -- (n10) -- (n12);
			\draw(n2) -- (n5) -- (n9);
			\draw(n3) -- (n7) -- (n10);
			\draw(n4) -- (n8) -- (n11);
			\begin{pgfonlayer}{background}
				\fill[green!50!black,opacity=.5](n1) circle(5pt);
				\fill[green!50!black,opacity=.5](n3) circle(5pt);
				\fill[green!50!black,opacity=.5](n4) circle(5pt);
				\fill[green!50!black,opacity=.5](n6) circle(5pt);
				\fill[green!50!black,opacity=.5](n8) circle(5pt);
				\draw[rounded corners,green!50!black,opacity=.4,cap=round,line width=5pt](n1) -- (n3) -- (n6);
				\draw[rounded corners,green!50!black,opacity=.4,cap=round,line width=5pt](n1) -- (n4) -- (n8);
				\fill[orange!50!black,opacity=.5](n2) circle(5pt);
				\fill[orange!50!black,opacity=.5](n5) circle(5pt);
				\fill[orange!50!black,opacity=.5](n9) circle(5pt);
				\draw[rounded corners,orange!50!black,opacity=.4,cap=round,line width=5pt](n2) -- (n5) -- (n9);
				\fill[red!50!black,opacity=.5](n7) circle(5pt);
				\fill[red!50!black,opacity=.5](n10) circle(5pt);
				\fill[red!50!black,opacity=.5](n11) circle(5pt);
				\fill[red!50!black,opacity=.5](n12) circle(5pt);
				\draw[rounded corners,red!50!black,opacity=.4,cap=round,line width=5pt](n7) -- (n10) -- (n12) -- (n11);
			\end{pgfonlayer}
		\end{tikzpicture}
	}\hspace*{1cm}
	\subfigure[A symmetric decomposition of the poset in Figure~\ref{fig:poset_decomposition_1}.]{\label{fig:poset_decomposition_2}
		\begin{tikzpicture}\small
			\def\x{1};
			\def\y{.75};
			\def\s{.67};
			\draw(.5*\x,2*\y) node{};
			\draw(3.5*\x,2*\y) node{};
			\draw(2*\x,1*\y) node[draw,circle,scale=\s](n1){};
			\draw(1*\x,2*\y) node[draw,circle,scale=\s](n2){};
			\draw(2*\x,2*\y) node[draw,circle,scale=\s](n3){};
			\draw(3*\x,2*\y) node[draw,circle,scale=\s](n4){};
			\draw(1*\x,3*\y) node[draw,circle,scale=\s](n5){};
			\draw(1.75*\x,3*\y) node[draw,circle,scale=\s](n6){};
			\draw(2.25*\x,3*\y) node[draw,circle,scale=\s](n7){};
			\draw(3*\x,3*\y) node[draw,circle,scale=\s](n8){};
			\draw(1*\x,4*\y) node[draw,circle,scale=\s](n9){};
			\draw(2*\x,4*\y) node[draw,circle,scale=\s](n10){};
			\draw(3*\x,4*\y) node[draw,circle,scale=\s](n11){};
			\draw(2*\x,5*\y) node[draw,circle,scale=\s](n12){};
			\draw(n1) -- (n2) -- (n6) -- (n11) -- (n12);
			\draw(n1) -- (n4) -- (n7) -- (n9) -- (n12);
			\draw(n1) -- (n3) -- (n6) -- (n10) -- (n12);
			\draw(n2) -- (n5) -- (n9);
			\draw(n3) -- (n7) -- (n10);
			\draw(n4) -- (n8) -- (n11);
			\begin{pgfonlayer}{background}
				\fill[green!50!black,opacity=.5](n4) circle(5pt);
				\fill[green!50!black,opacity=.5](n5) circle(5pt);
				\fill[green!50!black,opacity=.5](n7) circle(5pt);
				\fill[green!50!black,opacity=.5](n8) circle(5pt);
				\fill[green!50!black,opacity=.5](n9) circle(5pt);
				\draw[rounded corners,green!50!black,opacity=.4,cap=round,line width=5pt](n8) -- (n4) -- (n7) -- (n9) -- (n5);
				\fill[orange!50!black,opacity=.5](n1) circle(5pt);
				\fill[orange!50!black,opacity=.5](n2) circle(5pt);
				\fill[orange!50!black,opacity=.5](n3) circle(5pt);
				\fill[orange!50!black,opacity=.5](n6) circle(5pt);
				\fill[orange!50!black,opacity=.5](n10) circle(5pt);
				\fill[orange!50!black,opacity=.5](n11) circle(5pt);
				\fill[orange!50!black,opacity=.5](n12) circle(5pt);
				\draw[rounded corners,orange!50!black,opacity=.4,cap=round,line width=5pt](n3) -- (n1) -- (n2) -- (n6) -- (n11) -- (n12) -- (n10);
			\end{pgfonlayer}
		\end{tikzpicture}
	}\hspace*{1cm}
	\subfigure[A symmetric chain decomposition of the poset in Figure~\ref{fig:poset_decomposition_1} into four chains, one of which is a singleton sitting on the middle rank.]{\label{fig:poset_decomposition_3}
		\begin{tikzpicture}\small
			\def\x{1};
			\def\y{.75};
			\def\s{.67};
			\draw(.5*\x,2*\y) node{};
			\draw(3.5*\x,2*\y) node{};
			\draw(2*\x,1*\y) node[draw,circle,scale=\s](n1){};
			\draw(1*\x,2*\y) node[draw,circle,scale=\s](n2){};
			\draw(2*\x,2*\y) node[draw,circle,scale=\s](n3){};
			\draw(3*\x,2*\y) node[draw,circle,scale=\s](n4){};
			\draw(1*\x,3*\y) node[draw,circle,scale=\s](n5){};
			\draw(1.75*\x,3*\y) node[draw,circle,scale=\s](n6){};
			\draw(2.25*\x,3*\y) node[draw,circle,scale=\s](n7){};
			\draw(3*\x,3*\y) node[draw,circle,scale=\s](n8){};
			\draw(1*\x,4*\y) node[draw,circle,scale=\s](n9){};
			\draw(2*\x,4*\y) node[draw,circle,scale=\s](n10){};
			\draw(3*\x,4*\y) node[draw,circle,scale=\s](n11){};
			\draw(2*\x,5*\y) node[draw,circle,scale=\s](n12){};
			\draw(n1) -- (n2) -- (n6) -- (n11) -- (n12);
			\draw(n1) -- (n4) -- (n7) -- (n9) -- (n12);
			\draw(n1) -- (n3) -- (n6) -- (n10) -- (n12);
			\draw(n2) -- (n5) -- (n9);
			\draw(n3) -- (n7) -- (n10);
			\draw(n4) -- (n8) -- (n11);
			\begin{pgfonlayer}{background}
				\fill[green!50!black,opacity=.5](n1) circle(5pt);
				\fill[green!50!black,opacity=.5](n2) circle(5pt);
				\fill[green!50!black,opacity=.5](n5) circle(5pt);
				\fill[green!50!black,opacity=.5](n9) circle(5pt);
				\fill[green!50!black,opacity=.5](n12) circle(5pt);
				\draw[rounded corners,green!50!black,opacity=.4,cap=round,line width=5pt](n1) -- (n2) -- (n5) -- (n9) -- (n12);
				\fill[orange!50!black,opacity=.5](n3) circle(5pt);
				\fill[orange!50!black,opacity=.5](n6) circle(5pt);
				\fill[orange!50!black,opacity=.5](n10) circle(5pt);
				\draw[rounded corners,orange!50!black,opacity=.4,cap=round,line width=5pt](n3) -- (n6) -- (n10);
				\fill[blue!50!black,opacity=.5](n7) circle(5pt);
				\fill[red!50!black,opacity=.5](n4) circle(5pt);
				\fill[red!50!black,opacity=.5](n8) circle(5pt);
				\fill[red!50!black,opacity=.5](n11) circle(5pt);
				\draw[rounded corners,red!50!black,opacity=.4,cap=round,line width=5pt](n4) -- (n8) -- (n11);
			\end{pgfonlayer}
		\end{tikzpicture}
	}
	\subfigure[A symmetric Boolean decomposition into four parts.]{\label{fig:poset_decomposition_4}
		\begin{tikzpicture}\small
			\def\x{1};
			\def\y{.75};
			\def\s{.67};
			\draw(6*\x,1*\y) node[draw,circle,scale=\s](n1){};
			\draw(3.5*\x,2*\y) node[draw,circle,scale=\s](n2){};
			\draw(4.5*\x,2*\y) node[draw,circle,scale=\s](n3){};	%a
			\draw(5.5*\x,2*\y) node[draw,circle,scale=\s](n4){};	%b
			\draw(6.5*\x,2*\y) node[draw,circle,scale=\s](n5){};	%c
			\draw(7.5*\x,2*\y) node[draw,circle,scale=\s](n6){};	%d
			\draw(8.5*\x,2*\y) node[draw,circle,scale=\s](n7){};
			\draw(1*\x,3*\y) node[draw,circle,scale=\s](n8){};
			\draw(2*\x,3*\y) node[draw,circle,scale=\s](n9){};
			\draw(3*\x,3*\y) node[draw,circle,scale=\s](n10){};		%ab
			\draw(4*\x,3*\y) node[draw,circle,scale=\s](n11){};		%ac
			\draw(5*\x,3*\y) node[draw,circle,scale=\s](n12){};		%bc
			\draw(6*\x,3*\y) node[draw,circle,scale=\s](n13){};
			\draw(7*\x,3*\y) node[draw,circle,scale=\s](n14){};		%ad
			\draw(8*\x,3*\y) node[draw,circle,scale=\s](n15){};		%bd
			\draw(9*\x,3*\y) node[draw,circle,scale=\s](n16){};		%cd
			\draw(10*\x,3*\y) node[draw,circle,scale=\s](n17){};
			\draw(11*\x,3*\y) node[draw,circle,scale=\s](n18){};
			\draw(3.5*\x,4*\y) node[draw,circle,scale=\s](n19){};
			\draw(4.5*\x,4*\y) node[draw,circle,scale=\s](n20){};	%abc
			\draw(5.5*\x,4*\y) node[draw,circle,scale=\s](n21){};	%abd
			\draw(6.5*\x,4*\y) node[draw,circle,scale=\s](n22){};	%acd
			\draw(7.5*\x,4*\y) node[draw,circle,scale=\s](n23){};	%bcd
			\draw(8.5*\x,4*\y) node[draw,circle,scale=\s](n24){};
			\draw(6*\x,5*\y) node[draw,circle,scale=\s](n25){};
			\draw(n1) -- (n2) -- (n8) -- (n19) -- (n25);
			\draw(n1) -- (n3) -- (n10) -- (n20) -- (n25);
			\draw(n1) -- (n4) -- (n10) -- (n21) -- (n25);
			\draw(n1) -- (n5) -- (n11) -- (n22) -- (n25);
			\draw(n1) -- (n6) -- (n15) -- (n23) -- (n25);
			\draw(n1) -- (n7) -- (n17) -- (n24) -- (n25);
			\draw(n2) -- (n9) -- (n19);
			\draw(n2) -- (n10) -- (n19);
			\draw(n2) -- (n13);
			\draw(n3) -- (n11) -- (n20);
			\draw(n3) -- (n14) -- (n21);
			\draw(n4) -- (n12) -- (n20);
			\draw(n4) -- (n15) -- (n21);
			\draw(n5) -- (n12) -- (n23);
			\draw(n5) -- (n16) -- (n22);
			\draw(n6) -- (n14) -- (n22);
			\draw(n6) -- (n16) -- (n23);
			\draw(n7) -- (n16) -- (n24);
			\draw(n7) -- (n18) -- (n24);
			\draw(n7) -- (n14);
			\draw(n12) -- (n19);
			\draw(n13) -- (n24);
			\begin{pgfonlayer}{background}
				\fill[green!50!black,opacity=.5](n1) circle(5pt);
				\fill[green!50!black,opacity=.5](n3) circle(5pt);
				\fill[green!50!black,opacity=.5](n4) circle(5pt);
				\fill[green!50!black,opacity=.5](n5) circle(5pt);
				\fill[green!50!black,opacity=.5](n6) circle(5pt);
				\fill[green!50!black,opacity=.5](n10) circle(5pt);
				\fill[green!50!black,opacity=.5](n11) circle(5pt);
				\fill[green!50!black,opacity=.5](n12) circle(5pt);
				\fill[green!50!black,opacity=.5](n14) circle(5pt);
				\fill[green!50!black,opacity=.5](n15) circle(5pt);
				\fill[green!50!black,opacity=.5](n16) circle(5pt);
				\fill[green!50!black,opacity=.5](n20) circle(5pt);
				\fill[green!50!black,opacity=.5](n21) circle(5pt);
				\fill[green!50!black,opacity=.5](n22) circle(5pt);
				\fill[green!50!black,opacity=.5](n23) circle(5pt);
				\fill[green!50!black,opacity=.5](n25) circle(5pt);
				\draw[rounded corners,green!50!black,opacity=.4,cap=round,line width=5pt](n1) -- (n3) -- (n10) -- (n20) -- (n25);
				\draw[rounded corners,green!50!black,opacity=.4,cap=round,line width=5pt](n1) -- (n4) -- (n10) -- (n21) -- (n25);
				\draw[rounded corners,green!50!black,opacity=.4,cap=round,line width=5pt](n1) -- (n5) -- (n11) -- (n22) -- (n25);
				\draw[rounded corners,green!50!black,opacity=.4,cap=round,line width=5pt](n1) -- (n6) -- (n15) -- (n23) -- (n25);
				\draw[rounded corners,green!50!black,opacity=.4,cap=round,line width=5pt](n3) -- (n11) -- (n20);
				\draw[rounded corners,green!50!black,opacity=.4,cap=round,line width=5pt](n3) -- (n14) -- (n21);
				\draw[rounded corners,green!50!black,opacity=.4,cap=round,line width=5pt](n4) -- (n12) -- (n20);
				\draw[rounded corners,green!50!black,opacity=.4,cap=round,line width=5pt](n4) -- (n15) -- (n21);
				\draw[rounded corners,green!50!black,opacity=.4,cap=round,line width=5pt](n5) -- (n12) -- (n23);
				\draw[rounded corners,green!50!black,opacity=.4,cap=round,line width=5pt](n5) -- (n16) -- (n22);
				\draw[rounded corners,green!50!black,opacity=.4,cap=round,line width=5pt](n6) -- (n14) -- (n22);
				\draw[rounded corners,green!50!black,opacity=.4,cap=round,line width=5pt](n6) -- (n16) -- (n23);
				\fill[blue!50!black,opacity=.5](n13) circle(5pt);
				\fill[orange!50!black,opacity=.5](n2) circle(5pt);
				\fill[orange!50!black,opacity=.5](n8) circle(5pt);
				\fill[orange!50!black,opacity=.5](n9) circle(5pt);
				\fill[orange!50!black,opacity=.5](n19) circle(5pt);
				\draw[rounded corners,orange!50!black,opacity=.4,cap=round,line width=5pt](n2) -- (n8) -- (n19);
				\draw[rounded corners,orange!50!black,opacity=.4,cap=round,line width=5pt](n2) -- (n9) -- (n19);
				\fill[red!50!black,opacity=.5](n7) circle(5pt);
				\fill[red!50!black,opacity=.5](n17) circle(5pt);
				\fill[red!50!black,opacity=.5](n18) circle(5pt);
				\fill[red!50!black,opacity=.5](n24) circle(5pt);
				\draw[rounded corners,red!50!black,opacity=.4,cap=round,line width=5pt](n7) -- (n17) -- (n24);
				\draw[rounded corners,red!50!black,opacity=.4,cap=round,line width=5pt](n7) -- (n18) -- (n24);
			\end{pgfonlayer}
		\end{tikzpicture}
	}
	\caption{Examples of poset decompositions.}
	\label{fig:poset_decompositions}
\end{figure}

In the remainder of this section we collect a few structural consequences of the existence of a symmetric decomposition into chains or Boolean lattices.  For that let $\PP=(P,\leq)$ be a graded poset of rank $n$, and let
\begin{displaymath}
	\RR_{\PP}(t)=r_{0}(\PP)+r_{1}(\PP)t+\cdots+r_{n}(\PP)t^{n}
\end{displaymath} 
be its \alert{rank-generating polynomial}, \ie the polynomial whose coefficients are defined by $r_{k}(\PP) = \bigl\lvert\{x\in P\mid\rk(x)=k\}\bigr\rvert$.  The sequence of coefficients of $\RR_{\PP}(t)$ is the \alert{rank vector} of $\PP$.  We call $\PP$ \alert{rank-symmetric} if $r_{j}(\PP)=r_{n-j}(\PP)$ for $j\in\{0,1,\ldots,\lfloor\tfrac{n}{2}\rfloor\}$, and we call $\PP$ \alert{rank-unimodal} if there exists some $j\in\{0,1,\ldots,n\}$ such that $r_{0}(\PP)\leq r_{1}(\PP)\leq\cdots\leq r_{j}(\PP)\geq r_{j+1}(\PP)\geq\cdots\geq r_{n}(\PP)$.  If $\PP$ is rank-symmetric, then we can write 
\begin{displaymath}
	\RR_{\PP}(t) = \sum_{j=0}^{\lfloor\frac{n}{2}\rfloor}{\gamma_{j}t^{j}(1+t)^{n-2j}},
\end{displaymath}
and we call the sequence $(\gamma_{0},\gamma_{1},\ldots,\gamma_{\lfloor\tfrac{n}{2}\rfloor})$ the \alert{$\gamma$-vector} of $\PP$.  If the $\gamma$-vector of $\PP$ does not contain negative entries, then $\PP$ is \alert{rank-$\gamma$-nonnegative}.  For example, the poset in Figures~\ref{fig:poset_decomposition_1}--\ref{fig:poset_decomposition_3} has rank-generating polynomial 
\begin{displaymath}
	\RR(t) = 1 + 3t + 4t^2 + 3t^3 + t^4 = (1+t)^4 - t(1+t)^2.
\end{displaymath}
Therefore, its $\gamma$-vector is $(1,-1,0)$.  On the other hand, the poset in Figure~\ref{fig:poset_decomposition_4} is rank-$\gamma$-nonnegative, since its rank-generating polynomial is
\begin{displaymath}
	\RR(t) = 1 + 6t + 11t^2 + 6t^3 + t^4 = (1+t)^4 + 2t(1+t)^2 + t^2, 
\end{displaymath}
which yields the $\gamma$-vector $(1,2,1)$.

A \alert{$k$-family} is a subset of $P$ that does not contain a chain of length $k+1$.  A $1$-family is usually called an \alert{antichain}.  A graded poset is \alert{$k$-Sperner} if the size of a maximal $k$-family equals the sum of the $k$ largest rank numbers, and it is \alert{strongly Sperner} if it is $k$-Sperner for all $k\in[n]$.  See Figure~\ref{fig:sperner_posets} for some examples.  The next result states the connection between symmetric chain decompositions and the strong Sperner property.

\begin{proposition}[\cite{engel97sperner}*{Lemma~5.1.1~and~Theorem~5.1.4}]\label{prop:symmetric_chains_strongly_sperner}
	If a graded poset admits a symmetric chain decomposition, then it is strongly Sperner, rank-symmetric, and rank-unimodal. 
\end{proposition}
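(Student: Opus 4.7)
The plan is to read off all three conclusions directly from the combinatorics of a symmetric chain decomposition $C_{1}, C_{2}, \ldots, C_{m}$ of $\PP$. By definition, each chain $C_{i}$ occupies exactly the ranks $r_{i}, r_{i}+1, \ldots, n - r_{i}$ for some integer $r_{i}$, contributing one element to each such rank and none elsewhere.

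First I would dispose of rank-symmetry and rank-unimodality in one stroke. The chain $C_{i}$ contributes to rank $j$ precisely when $r_{i} \leq \min(j, n-j)$, and this condition is invariant under the swap $j \leftrightarrow n-j$; summing over $i$ yields $r_{j}(\PP) = r_{n-j}(\PP)$. Moreover, $\min(j, n-j)$ is weakly nondecreasing as $j$ moves toward $\lfloor n/2 \rfloor$, so the number of chains meeting rank $j$ only grows, which gives rank-unimodality.

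For the strong Sperner property I would prove matching upper and lower bounds on the size of a $k$-family. For the upper bound, any $k$-family $F$ meets each $C_{i}$ in a chain of at most $k$ elements, giving $|F| \leq \sum_{i=1}^{m} \min(k, |C_{i}|)$. For the lower bound, let $X$ be the set of elements of $\PP$ whose rank lies in the $k$ ranks closest to $\lfloor n/2 \rfloor$. Then $X$ is itself a $k$-family, since every chain meets each rank in at most one element. Because each $C_{i}$ is symmetric about rank $n/2$, its intersection with this central band has size exactly $\min(k, |C_{i}|)$: all of $C_{i}$ sits inside the band if $|C_{i}| \leq k$, while otherwise the band trims $C_{i}$ to its $k$ middle elements. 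Hence $|X|$ attains the upper bound, and rank-symmetry combined with rank-unimodality (already proved) identify $|X|$ with the sum of the $k$ largest rank numbers.

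I do not anticipate any real obstacle: the entire argument is bookkeeping once the definition of a symmetric chain decomposition is unpacked. The only point that requires a touch of care is the parity analysis when isolating ``the $k$ central ranks'' in the case where $n$ and $k$ have opposite parity, but rank-symmetry makes the precise choice of the central band immaterial.
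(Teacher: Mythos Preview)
Your argument is correct and is essentially the standard proof of this fact. The paper does not supply its own proof of this proposition; it simply cites Engel's textbook (Lemma~5.1.1 and Theorem~5.1.4), so there is nothing to compare at the level of technique. Your write-up would serve perfectly well as an inline proof were one desired, and the parity wrinkle you flag is indeed harmless: choosing the band to start at rank $\lfloor (n-k+1)/2\rfloor$ works uniformly, since a symmetric chain of length at least $k$ then contains the whole band while a shorter one is contained in it.
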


The existence of a symmetric Boolean decomposition has an even stronger consequence. 

\begin{proposition}[\cite{petersen13on}*{Observation~11}]\label{prop:symmetric_boolean_gamma_nonnegative}
	If a graded poset of rank $n$ admits a symmetric Boolean decomposition, then it is rank-$\gamma$-nonnegative.  In fact, the $j\th$ entry in the $\gamma$-vector equals the number of parts in this decomposition with cardinality $2^{n-2j}$.  
\end{proposition}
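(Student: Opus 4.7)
The plan is to turn the symmetric-Boolean-decomposition hypothesis into a direct polynomial identity for $\RR_{\PP}(t)$, from which both claims of the proposition follow by comparing coefficients. The first step is to analyze a single Boolean part $B$ of the decomposition. If $B$ has rank $r$ as a Boolean lattice, then $\lvert B\rvert = 2^{r}$. Because cover relations in $B$ are cover relations in $\PP$ (from the definition of a decomposition), the ambient rank function of $\PP$ restricted to $B$ differs from the intrinsic Boolean rank on $B$ by a constant: writing $j = \rk(m_{B})$ for the rank of the minimum of $B$, every element of $B$ at internal rank $k$ has $\PP$-rank $j+k$. Hence $B$ contributes $t^{j}(1+t)^{r}$ to $\RR_{\PP}(t)$. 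Symmetry of the decomposition forces $\rk(m_{B}) + \rk(M_{B}) = n$ for the maximum $M_{B}$ of $B$; combined with $\rk(M_{B}) - \rk(m_{B}) = r$ this gives $r = n - 2j$, and in particular $\lvert B\rvert = 2^{n-2j}$.

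Summing these contributions over all parts of the decomposition yields
\begin{equation*}
	\RR_{\PP}(t) = \sum_{j=0}^{\lfloor n/2\rfloor} b_{j}\, t^{j}(1+t)^{n-2j},
\end{equation*}
where $b_{j}$ denotes the number of parts of cardinality $2^{n-2j}$. Since each summand $t^{j}(1+t)^{n-2j}$ is palindromic of degree $n$, this already shows that $\RR_{\PP}(t)$ is rank-symmetric, so the $\gamma$-vector is defined. Next I would observe that the family $\bigl\{t^{j}(1+t)^{n-2j}\bigr\}_{0 \leq j \leq \lfloor n/2\rfloor}$ is linearly independent, because the lowest-degree term of $t^{j}(1+t)^{n-2j}$ is $t^{j}$; consequently the expansion of a rank-symmetric polynomial of degree $n$ in this family is unique. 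Comparing the above display with the defining relation of the $\gamma$-vector then forces $\gamma_{j} = b_{j}$, which is a nonnegative integer and gives precisely the claimed combinatorial interpretation.

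The proof is almost entirely bookkeeping; there is no genuine obstacle. The one place to tread carefully is the rank-translation step inside a part $B$: the decomposition axiom that covers in $B$ are covers in $\PP$ is exactly what is needed so that the $\PP$-rank function on $B$ differs from the intrinsic Boolean rank by only a global shift, and hence that $B$ contributes the clean shifted polynomial $t^{j}(1+t)^{n-2j}$ rather than something more delicate that would spoil the alignment with the $\gamma$-basis.
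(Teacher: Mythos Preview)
Your argument is correct and is precisely the standard proof of this observation: each symmetric Boolean part of rank $r=n-2j$ contributes $t^{j}(1+t)^{n-2j}$ to the rank-generating polynomial, and linear independence of the family $\{t^{j}(1+t)^{n-2j}\}_{j}$ forces $\gamma_{j}$ to equal the number $b_{j}$ of such parts.

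Note that the paper itself does not supply a proof of this proposition; it is quoted as Observation~11 from \cite{petersen13on}. So there is no in-paper proof to compare against, but what you have written is exactly the argument one finds in Petersen's work, and there are no gaps.
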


\begin{figure}
	\centering
	\subfigure[A strongly Sperner poset.]{\label{fig:sperner_posets_1}
		\begin{tikzpicture}\small
			\def\x{1};
			\def\y{.75};
			\def\s{.67};
			\draw(.5*\x,2*\y) node{};
			\draw(3.5*\x,2*\y) node{};
			\draw(2*\x,1*\y) node[draw,circle,scale=\s](n1){};
			\draw(1*\x,2*\y) node[draw,circle,scale=\s](n2){};
			\draw(2*\x,2*\y) node[draw,circle,scale=\s](n3){};
			\draw(3*\x,2*\y) node[draw,circle,scale=\s](n4){};
			\draw(1*\x,3*\y) node[draw,circle,scale=\s](n5){};
			\draw(1.75*\x,3*\y) node[draw,circle,scale=\s](n6){};
			\draw(2.25*\x,3*\y) node[draw,circle,scale=\s](n7){};
			\draw(3*\x,3*\y) node[draw,circle,scale=\s](n8){};
			\draw(1*\x,4*\y) node[draw,circle,scale=\s](n9){};
			\draw(2*\x,4*\y) node[draw,circle,scale=\s](n10){};
			\draw(3*\x,4*\y) node[draw,circle,scale=\s](n11){};
			\draw(2*\x,5*\y) node[draw,circle,scale=\s](n12){};
			\draw(n1) -- (n2) -- (n6) -- (n11) -- (n12);
			\draw(n1) -- (n4) -- (n7) -- (n9) -- (n12);
			\draw(n1) -- (n3) -- (n6) -- (n10) -- (n12);
			\draw(n2) -- (n5) -- (n9);
			\draw(n3) -- (n7) -- (n10);
			\draw(n4) -- (n8) -- (n11);
		\end{tikzpicture}
	}\hspace*{1cm}
	\subfigure[A Sperner poset that is not $2$-Sperner.]{\label{fig:sperner_posets_2}
		\begin{tikzpicture}\small
			\def\x{1};
			\def\y{.75};
			\def\s{.67};
			\draw(.5*\x,2*\y) node{};
			\draw(3.5*\x,2*\y) node{};
			\draw(2*\x,1*\y) node[draw,circle,scale=\s](n1){};
			\draw(1*\x,2*\y) node[draw,circle,scale=\s](n2){};
			\draw(2*\x,2*\y) node[draw,circle,scale=\s](n3){};
			\draw(3*\x,2*\y) node[draw,circle,scale=\s](n4){};
			\draw(1*\x,3*\y) node[draw,circle,scale=\s](n5){};
			\draw(1.75*\x,3*\y) node[draw,circle,scale=\s](n6){};
			\draw(2.25*\x,3*\y) node[draw,circle,scale=\s](n7){};
			\draw(3*\x,3*\y) node[draw,circle,scale=\s](n8){};
			\draw(1*\x,4*\y) node[draw,circle,scale=\s](n9){};
			\draw(2*\x,4*\y) node[draw,circle,scale=\s](n10){};
			\draw(3*\x,4*\y) node[draw,circle,scale=\s](n11){};
			\draw(2*\x,5*\y) node[draw,circle,scale=\s](n12){};
			\draw(n1) -- (n2) -- (n5) -- (n9) -- (n12);
			\draw(n1) -- (n3) -- (n6) -- (n9);
			\draw(n1) -- (n4) -- (n7) -- (n10) -- (n12);
			\draw(n4) -- (n8) -- (n11) -- (n12);
		\end{tikzpicture}
	}\hspace*{1cm}
	\subfigure[A $2$-Sperner poset that is not Sperner.]{\label{fig:sperner_posets_3}
		\begin{tikzpicture}\small
			\def\x{1};
			\def\y{.75};
			\def\s{.67};
			\draw(.5*\x,2*\y) node{};
			\draw(3.5*\x,2*\y) node{};
			\draw(2*\x,1*\y) node[draw,circle,scale=\s](n1){};
			\draw(1*\x,2*\y) node[draw,circle,scale=\s](n2){};
			\draw(2*\x,2*\y) node[draw,circle,scale=\s](n3){};
			\draw(3*\x,2*\y) node[draw,circle,scale=\s](n4){};
			\draw(1*\x,4*\y) node[draw,circle,scale=\s](n5){};
			\draw(2*\x,4*\y) node[draw,circle,scale=\s](n6){};
			\draw(3*\x,4*\y) node[draw,circle,scale=\s](n7){};
			\draw(2*\x,5*\y) node[draw,circle,scale=\s](n8){};
			\draw(n1) -- (n2) -- (n5) -- (n8);
			\draw(n1) -- (n3) -- (n5);
			\draw(n1) -- (n4) -- (n6) -- (n8);
			\draw(n4) -- (n7) -- (n8);
		\end{tikzpicture}
	}
	\caption{Some illustrations on the Sperner property.}
	\label{fig:sperner_posets}
\end{figure}

\subsection{Complex Reflection Groups}
	\label{sec:complex_reflection_groups}
A \alert{reflection} is a unitary transformation $t$ on an $n$-dimensional complex vector space $V$ that has finite order and fixes a subspace of $V$ of codimension $1$, the so-called \alert{reflecting hyperplane} associated with $t$.  A subgroup $W$ of the group of all unitary transformations on $V$ is a \alert{complex reflection group} if it is generated by reflections.  If $W$ does not preserve a proper subspace of $V$, then $W$ is \alert{irreducible}, and the \alert{rank} of $W$ is the codimension of the space fixed by $W$.  If $W$ has rank $n$ and can be generated by $n$ reflections, then we call $W$ \alert{well-generated}.  Any maximal subgroup of $W$ that fixes a subspace of $V$ pointwise is a \alert{parabolic subgroup} of $W$.

An important property that distinguishes complex reflection groups from other finite groups is that its algebra of invariant polynomials is again a polynomial algebra~\cites{chevalley55invariants,shephard54finite}.  Moreover, if we choose the generators of this algebra homogeneously, then the corresponding degrees become group invariants, and will simply be called the \alert{degrees} of $W$.  We usually denote the degrees of $W$ by $d_{1},d_{2},\ldots,d_{n}$, where we implicitly assume that their values increase weakly.

According to the classification of irreducible complex reflection groups due to G.~C.~Shephard and J.~A.~Todd~\cite{shephard54finite}, there is one infinite family of such groups, parametrized by three integers $d,e,n$, whose members are usually denoted by $G(de,e,n)$, as well as $34$ exceptional groups, usually denoted by $G_{4},G_{5},\ldots,G_{37}$.  The groups $G(de,e,n)$ can be realized as groups of monomial $n\times n$ matrices, \ie matrices with a unique non-zero entry in each row and in each column.  For a monomial matrix to belong to $G(de,e,n)$ its non-zero entries need to be $(de)\th$ roots of unity, while the product of all its non-zero entries needs to be a $d\th$ root of unity.  Consequently these groups possess a wreath product structure,  see~\cite{lehrer09unitary}*{Chapter~2.2} for the details.  It follows from \cite{orlik80unitary}*{Table~2} that there are three infinite families of irreducible well-generated complex reflection groups, namely $G(1,1,n)$ for some $n\geq 1$, $G(d,1,n)$ for some $n\geq 1$ and some $d\geq 2$, and $G(d,d,n)$ for some $d,n\geq 2$, as well as $26$ exceptional irreducible well-generated complex reflection groups.

We remark that the finite irreducible Coxeter groups are the irreducible well-generated complex reflection groups realizable over a real vector space; we have the following correspondences:
\begin{itemize}
	\item the group $G(1,1,n)$ is isomorphic to the Coxeter group $A_{n-1}$ (which in turn is isomorphic to the symmetric group of rank $n$),
	\item the group $G(2,1,n)$ is isomorphic to the Coxeter group $B_{n}$ (which in turn is isomorphic to the hyperoctahedral group of rank $n$),
	\item the group $G(2,2,n)$ is isomorphic to the Coxeter group $D_{n}$,
	\item the group $G(d,d,2)$ is isomorphic to the Coxeter group $I_{2}(d)$ (which in turn is isomorphic to the dihedral group of order $2d$), \quad and
	\item the groups $G_{23},G_{28},G_{30},G_{35},G_{36},G_{37}$ are isomorphic to the Coxeter groups $H_{3},F_{4},H_{4},E_{6},E_{7},E_{8}$, respectively.	
\end{itemize}

\subsection{Coxeter Elements}
	\label{sec:coxeter_elements}
A vector $\mathbf{v}\in V$ is \alert{regular} if it does not lie in any of the reflecting hyperplanes of $W$.  If $\zeta$ is an eigenvalue of $w\in W$, and the corresponding eigenspace contains a regular vector, then we say that $w$ is \alert{$\zeta$-regular}.  The multiplicative order $d$ of $\zeta$ is a \alert{regular number} for $W$.  The $\zeta$-regular elements of $W$ form a single conjugacy class~\cite{springer74regular}*{Theorem~4.2}.  

If $W$ is irreducible and well-generated, then there exist some well-behaved regular elements.  More precisely, in that case it follows from \cite{lehrer99reflection}*{Theorem~C} that the largest degree is always a regular number for $W$, and we usually write $h$ instead of $d_{n}$, and call it the \alert{Coxeter number} of $W$.  A \alert{Coxeter element} is a $\zeta$-regular element of order $h$ for some $h\th$ root of unity $\zeta$~\cite{reiner14on}*{Definition~1.1}.

\subsection{Noncrossing Partitions}
	\label{sec:noncrossing_partitions}
Let $W$ be an irreducible well-generated complex reflection group, and let $T\subseteq W$ denote the set of all reflections of $W$.  The \alert{absolute length} of $w\in W$ is defined by
\begin{equation}\label{eq:absolute_length}
	\ell_{T}(w) = \min\{k\mid w=t_{1}t_{2}\cdots t_{k}\;\text{for}\;t_{i}\in T\}.
\end{equation}
The \alert{absolute order} on $W$ is the partial order $\leq_{T}$ defined by
\begin{equation}\label{eq:absolute_order}
	u\leq_{T}v\quad\text{if and only if}\quad\ell_{T}(v)=\ell_{T}(u)+\ell_{T}(u^{-1}v),
\end{equation}
for all $u,v\in W$.  For every Coxeter element $\gamma\in W$, define the set of \alert{$W$-noncrossing partitions} by
\begin{equation}\label{eq:noncrossing_partitions}
	\nc{W}{1}(\gamma) = \{u\in W\mid \varepsilon\leq_{T}u\leq_{T}\gamma\},
\end{equation}
where $\varepsilon$ denotes the identity of $W$.  The poset $\pnc{W}{1}(\gamma)=\bigl(\nc{W}{1}(\gamma),\leq_{T}\bigr)$ is the \alert{lattice of $W$-noncrossing partitions}, and its structure does not depend on the choice of $\gamma$ as the next result shows.  We thus suppress the Coxeter element in the notation whenever it is not necessary.  

\begin{proposition}[\cite{reiner14on}*{Corollary~1.6}]\label{prop:noncrossing_partitions_independent}
	Let $W$ be an irreducible well-generated complex reflection group.  The posets $\pnc{W}{1}(\gamma)$ and $\pnc{W}{1}(\gamma')$ are isomorphic for all Coxeter elements $\gamma,\gamma'\in W$.	
\end{proposition}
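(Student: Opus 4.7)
The plan is to exhibit a bijection $\phi\colon W\to W$ that permutes the reflection set $T$ and sends $\gamma$ to $\gamma'$. Since the absolute length $\ell_T$ and the absolute order $\leq_T$ are defined purely in terms of $T$, any such $\phi$ is automatically an automorphism of $(W,\leq_T)$, so restricting it to $[\varepsilon,\gamma]$ yields the desired isomorphism $\pnc{W}{1}(\gamma)\to\pnc{W}{1}(\gamma')$. Two obvious sources of such bijections are available: inner automorphisms (conjugation by $w\in W$), which preserve $T$ because $T$ is stable under conjugation, and Galois automorphisms acting entrywise on the matrices of $W$, which preserve $T$ because ``being a reflection'' is a Galois-invariant property.

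First, I would handle the case in which $\gamma$ and $\gamma'$ are both $\zeta$-regular for the \emph{same} primitive $h\th$ root of unity $\zeta$. By Springer's theorem, cited earlier in the paper, the $\zeta$-regular elements form a single $W$-conjugacy class, so conjugation by a suitable $w\in W$ produces the required poset automorphism. For the general case, write $\gamma$ as $\zeta$-regular and $\gamma'$ as $\zeta'$-regular with $\zeta'=\zeta^k$ for some $k$ coprime to $h$. A standard Springer-type observation gives that $\gamma^k$ is $\zeta'$-regular, hence $W$-conjugate to $\gamma'$ by the previous step. It therefore suffices to construct an isomorphism $\pnc{W}{1}(\gamma)\to\pnc{W}{1}(\gamma^k)$. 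For this I would appeal to the Galois automorphism $\sigma_k\colon\zeta\mapsto\zeta^k$ of $\mathbb{Q}(\zeta_h)$: since the reflection representation of an irreducible well-generated complex reflection group is defined over this cyclotomic field, $\sigma_k$ acts entrywise on the matrices of $W$ and induces a group automorphism that permutes $T$. Its action on eigenvalues sends $\zeta$ to $\zeta^k$, so $\sigma_k(\gamma)$ is $\zeta^k$-regular and therefore $W$-conjugate to $\gamma^k$ by Springer's theorem once again. Composing with an inner automorphism finishes the argument.

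The main obstacle is the Galois step: one must verify that $\sigma_k$ really does stabilize $W$ set-wise, rather than carrying it to a distinct Galois-conjugate reflection group, which ultimately rests on the fact that well-generated irreducible complex reflection groups are ``rational'' over $\mathbb{Q}(\zeta_h)$. Once this is granted, the composition of a Galois automorphism and a suitable inner automorphism of $W$ supplies the required isomorphism for an arbitrary pair of Coxeter elements, and the assertion that $\pnc{W}{1}$ does not depend on the choice of $\gamma$ follows immediately.
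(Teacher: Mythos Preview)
The paper does not supply its own proof of this proposition; it simply cites \cite{reiner14on}*{Corollary~1.6} and uses the result as a black box.  So there is no ``paper's proof'' to compare against.

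Your sketch is, in fact, a faithful outline of the argument in the cited reference.  The two ingredients---Springer's theorem that $\zeta$-regular elements form a single conjugacy class, and the Galois action on the reflection representation---are exactly what Reiner, Ripoll, and Stump use.  You have also correctly located the non-trivial step: one must know that the field of definition $K_W$ of an irreducible well-generated complex reflection group is contained in $\mathbb{Q}(\zeta_h)$, so that the Galois automorphism $\sigma_k$ for $k$ coprime to $h$ really does stabilize $W$ (rather than mapping it to a different reflection group).  This containment is established case-by-case in the cited paper and is the substantive content behind the proposition; everything else in your outline is routine once it is granted.

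One small remark: your detour through $\gamma^k$ is unnecessary.  Once you know $\sigma_k(\gamma)$ is $\zeta^k$-regular of order $h$, it is by definition a Coxeter element, and Springer's theorem applied to $\zeta'=\zeta^k$ already makes it conjugate to $\gamma'$.  The intermediate comparison with $\gamma^k$ adds nothing.
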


The fact that $\pnc{W}{1}$ is a lattice was shown by a collaborative effort of several authors \cites{athanasiadis04noncrossing,bessis03dual,bessis06non,bessis15finite,brady01partial,brady02artin,kreweras72sur,reiner97non}.  To date a uniform proof of the lattice property of $\pnc{W}{1}$ is only available for the real reflection groups~\cite{brady08non}.

%In \cite{brady01partial} the lattice property was shown for $\pnc{G(1,1,n)}{1}$, where it was also shown that this lattice coincides with the lattice of noncrossing set partitions of $[n]$ introduced by G.~Kreweras in \cite{kreweras72sur}.  (Hence the name!)  The same observation was made by P.~Biane in \cite{biane97some}, however, without explicitly using the algebraic definition in \eqref{eq:absolute_order}.  In \cite{brady02artin} it was shown that $\pnc{G(2,1,n)}{1}$ and $\pnc{(G,2,2,n)}{1}$ are lattices, and it was observed that $\pnc{G(2,1,n)}{1}$ coincides with the type-$B$ lattice of noncrossing set partitions introduced by V.~Reiner in \cite{reiner97non}.  A combinatorial model for $\pnc{G(2,2,n)}{1}$ was later given by C.~Athanasiadis and V.~Reiner in \cite{athanasiadis04noncrossing}.  D.~Bessis used a different perspective to show that $\pnc{W}{1}$ is a lattice whenever $W$ is a real reflection group (\ie whenever $W$ can be realized as a finite reflection group acting on a real vector space)~\cite{bessis03dual}.  Together with R.~Corran he gave a combinatorial model for $\pnc{G(d,d,n)}{1}$ and proved that it is a lattice~\cite{bessis06non}.  Finally, he proved the lattice property for all remaining well-generated complex reflection groups in \cite{bessis15finite} in a case-by-case fashion.  To date a uniform proof of the lattice property of $\pnc{W}{1}$ is only available for the real reflection groups~\cite{brady08non}.

The noncrossing partition lattices enjoy many nice structural properties.  It is straightforward from the definition that they are graded, atomic, (locally) self-dual, and (locally) complemented, and it is a little more involved to show that they are also EL-shellable~\cites{athanasiadis07shellability,bjorner80shellable,muehle15el,reiner97non}.  Another striking property is that the cardinality of $\pnc{W}{1}$ is given by the corresponding $W$-Catalan number, defined by
\begin{equation}\label{eq:coxeter_catalan_number}
	\text{Cat}_{W} = \prod_{i=1}^{n}{\frac{d_{i}+h}{d_{i}}}.
\end{equation}
This was observed for the real reflection groups in \cites{athanasiadis04noncrossing,chapoton04enumerative,reiner97non}, and in the general case in \cites{bessis06non,bessis15finite}.  Again, to date a uniform proof of this fact is not available.

For later use, we record the following observation

\begin{proposition}[\cite{ripoll10orbites}*{Proposition~6.3(i),(ii)}]\label{prop:parabolic_coxeter_elements}
	Let $W$ be an irreducible well-generated complex reflection group, and let $w\in W$.  Let $T$ denote the set of all reflections of $W$.  The following are equivalent:
	\begin{enumerate}[(i)]
		\item $w$ is a Coxeter element in a parabolic subgroup of $W$, and
		\item there is a Coxeter element $\gamma_{w}\in W$ such that $w\leq_{T}\gamma_{w}$.
	\end{enumerate}
\end{proposition}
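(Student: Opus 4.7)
The plan is to prove the two implications using the standard dictionary linking the absolute order, fixed spaces, and parabolic subgroups of a well-generated complex reflection group. The main background tool is the Carter--Bessis lemma: for every $w \in W$ one has $\ell_T(w) = \mathrm{codim}\,\mathrm{Fix}(w)$, and, moreover, for any $T$-reduced factorization $w = t_1 \cdots t_k$ the fixed space of $w$ equals $\bigcap_{i=1}^{k} H_{t_i}$, where $H_{t_i}$ denotes the reflecting hyperplane of $t_i$. Combined with the Steinberg fixed-point theorem — that the pointwise stabilizer $W_{\mathrm{Fix}(w)}$ of $\mathrm{Fix}(w)$ is a parabolic subgroup of $W$ of rank $\mathrm{codim}\,\mathrm{Fix}(w)$ — this already identifies a canonical parabolic subgroup housing $w$.

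For (ii)$\Rightarrow$(i), assume $w \leq_T \gamma_w$ and fix a $T$-reduced factorization $w = t_1 \cdots t_k$ with $k = \ell_T(w)$. Each $t_i$ fixes $\mathrm{Fix}(w)$ pointwise (its reflecting hyperplane contains $\mathrm{Fix}(w)$ by Carter--Bessis), so every $t_i$ lies in the parabolic $W' := W_{\mathrm{Fix}(w)}$, whose rank is precisely $k$. Hence $w$ is a product of $\mathrm{rk}(W')$ reflections of $W'$. To upgrade this to $w$ being a Coxeter element of $W'$, one complements $w$ inside $\gamma_w$: writing $\gamma_w = w \cdot u$ with $\ell_T(\gamma_w) = \ell_T(w) + \ell_T(u)$, apply the Lehrer--Springer machinery described in Section~\ref{sec:coxeter_elements} to the pair $(W', W)$. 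Because $\gamma_w$ is $\zeta$-regular of order $h(W)$, the factor $w$ inherits a $\zeta'$-regular eigenvector for an $h(W')\th$ root of unity, so $w$ is a Coxeter element of $W'$.

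For (i)$\Rightarrow$(ii), suppose $w$ is a Coxeter element of a parabolic subgroup $W' = W_{V'}$ of rank $k$, with $T'$-reduced expression $w = t_1 \cdots t_k$ in $W'$. Since $T' \subseteq T$, we have $\ell_T(w) \leq k$; equality holds because $\ell_T(w) = \mathrm{codim}\,\mathrm{Fix}(w) \geq \mathrm{codim}\,V' = k$. It remains to extend $t_1 \cdots t_k$ to a $T$-reduced factorization $t_1 \cdots t_n$ of a Coxeter element of $W$. This is carried out by choosing any Coxeter element of the quotient reflection structure of $W$ acting on $V'$ and concatenating $T$-reduced expressions, a procedure guaranteed by Bessis's theory of dual braid monoids (and the Hurwitz transitivity recalled in the introduction).

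The genuine obstacle sits in both directions: controlling when an element with $\ell_T(w) = \mathrm{rk}(W_{\mathrm{Fix}(w)})$ is actually a Coxeter element of $W_{\mathrm{Fix}(w)}$, and not merely some product of the right number of reflections. For real reflection groups this is settled by Carter's classification of conjugacy classes, while for the remaining well-generated complex reflection groups one must rely on the case-by-case analysis of Bessis. Once this regularity statement is in place, both implications follow by the bookkeeping above.
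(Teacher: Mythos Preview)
The paper does not prove this proposition at all: it is quoted verbatim from \cite{ripoll10orbites}*{Proposition~6.3(i),(ii)}, with only a remark that the more general notion of Coxeter element used here is covered by the results of \cite{reiner14on}. So there is no ``paper's own proof'' to compare against; your proposal is a standalone attempt.

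As such, your sketch is honest about its own incompleteness, and that honesty is warranted. In the direction (ii)$\Rightarrow$(i) you correctly place $w$ inside the parabolic $W'=W_{\mathrm{Fix}(w)}$ via Steinberg and the codimension formula, but the sentence ``the factor $w$ inherits a $\zeta'$-regular eigenvector for an $h(W')\th$ root of unity'' is not an argument: nothing in the Lehrer--Springer theory you invoke produces a regular eigenvector for $w$ from one for $\gamma_w$ without substantial further work. In the direction (i)$\Rightarrow$(ii), the ``quotient reflection structure of $W$ acting on $V'$'' is not a well-defined object in general (there is no natural complement on which a reflection group acts), so the concatenation step is not justified as stated. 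Your final paragraph concedes exactly these points and defers to case-by-case arguments; that is accurate, but it means what you have written is an outline of where the difficulties lie rather than a proof. If you want to cite this result, cite Ripoll as the paper does; if you want to reprove it, the substantive content you are missing is precisely what \cite{ripoll10orbites} supplies.
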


A \alert{parabolic Coxeter element} is any $w\in W$ that satisfies one of the properties stated in Proposition~\ref{prop:parabolic_coxeter_elements}.  

\begin{remark}
	We need to be a little careful when using Proposition~\ref{prop:parabolic_coxeter_elements} together with our definition of Coxeter elements from Section~\ref{sec:coxeter_elements}.  The reference \cite{ripoll10orbites} uses a less general definition of a Coxeter element (it considers only those coming from the regular number $e^{2i\pi/h}$).  However, the results in \cite{reiner14on} guarantee that Proposition~\ref{prop:parabolic_coxeter_elements} also holds in the more general setting.	
%	statement of Proposition~\ref{prop:parabolic_coxeter_elements} remains true in the more general setting.  Proposition~1.5 in \cite{reiner14on} states that any two Coxeter elements of an irreducible well-generated complex reflection group are related by a reflection automorphism, \ie a group automorphism that fixes the set of all reflections.  It is immediate from the definition that a reflection automorphism $\psi$ of $W$ that sends a Coxeter element $\gamma$ to a Coxeter element $\gamma'$ constitutes an isomorphism of lattices from $\pnc{W}{1}(\gamma)$ to $\pnc{W}{1}(\gamma')$.  Let us therefore assume that $\gamma$ is a Coxeter element according to the definition used in \cite{ripoll10orbites}.  If we pick some $w\leq_{T}\gamma$ and denote by $W'$ the parabolic subgroup of $W$ in which $w$ is a Coxeter element, then it follows from the fact that $\psi$ is a reflection automorphism that $\psi(W')$ is a parabolic subgroup of $W$ in which $\psi(w)$ is a Coxeter element and vice versa.
\end{remark}

\section{Decompositions of $\pnc{G(d,d,n)}{1}$}
	\label{sec:decomposition}
\subsection{The Setup}
	\label{sec:setup}
In this section, we focus on the irreducible well-generated complex reflection group $G(d,d,n)$ for some fixed choice of $d,n\geq 2$.  Recall from Section~\ref{sec:complex_reflection_groups} that the elements of $G(d,d,n)$ can be realized as monomial $n\times n$ matrices whose non-zero entries are $d\th$ roots of unity and for which the product of all non-zero entries is $1$.  In this representation, we can view $G(d,d,n)$ as a subgroup of the symmetric group $\mathfrak{S}_{dn}$ acting on the set
\begin{displaymath}
	\Bigl\{\colint{1}{0},\colint{2}{0},\ldots,\colint{n}{0},\colint{1}{1},\colint{2}{1},\ldots,\colint{n}{1},\ldots,\colint{1}{d-1},\colint{2}{d-1},\ldots,\colint{n}{d-1}\Bigr\}
\end{displaymath}
of $n$ integers with $d$ colors, where $\colint{k}{s}$ represents the column vector whose $k\th$ entry is $\zeta^{s}$ for some primitive $d\th$ root of unity $\zeta$, and whose other entries are zero.  In particular, $w\in G(d,d,n)$ satisfies
\begin{displaymath}
	w\Bigl(\colint{k}{s}\Bigr) = \colint{\pi(k)}{s+t_{k}}\quad\text{and}\quad\sum_{i=1}^{k}{t_{k}}\equiv 0\pmod{d},
\end{displaymath}
where $\pi\in\mathfrak{S}_{n}$ and the numbers $t_{k}$ depend only on $w$ and $k$.  (Here, addition in the superscript is considered modulo $d$.)  More precisely, the permutation $\pi$ is given by the permutation matrix that is derived from $w$ by replacing each non-zero entry by $1$, and the number $t_{k}$ is determined by the non-zero value in position $(k,\pi(k))$ of $w$.  Consequently, we can decompose the elements of $G(d,d,n)$ into generalized cycles of the following form
\begin{multline*}
	\Bigll\colint{k_{1}}{t_{1}}\;\colint{k_{2}}{t_{2}}\;\ldots\;\colint{k_{r}}{t_{r}}\Bigrr = \Bigl(\colint{k_{1}}{t_{1}}\;\colint{k_{2}}{t_{2}}\;\ldots\;\colint{k_{r}}{t_{r}}\Bigr)\Bigl(\colint{k_{1}}{t_{1}+1}\;\colint{k_{2}}{t_{2}+1}\;\ldots\;\colint{k_{r}}{t_{r}+1}\Bigr)\\
		\cdots\Bigl(\colint{k_{1}}{t_{1}+d-1}\;\colint{k_{2}}{t_{2}+d-1}\;\ldots\;\colint{k_{r}}{t_{r}+d-1}\Bigr),
\end{multline*}
and
\begin{multline*}
	\Bigl[\colint{k_{1}}{t_{1}}\;\colint{k_{2}}{t_{2}}\;\ldots\;\colint{k_{r}}{t_{r}}\Bigr]_{s} = \Bigl(\colint{k_{1}}{t_{1}}\;\colint{k_{2}}{t_{2}}\;\ldots\;\colint{k_{r}}{t_{r}}\;\colint{k_{1}}{t_{1}+s}\;\colint{k_{2}}{t_{2}+s}\\
		\ldots\;\colint{k_{r}}{t_{r}+s}\;\ldots\;\colint{k_{1}}{t_{1}(d-1)s}\;\colint{k_{2}}{t_{2}+(d-1)s}\;\ldots\;\colint{k_{r}}{t_{r}+(d-1)s}\Bigr),
\end{multline*}
for $s\in[d-1]$.  We call the first type a \alert{simultaneous cycle} and the second type a \alert{balanced cycle}, and we usually suppress the subscript $1$.  In each case we say that $r$ is the \alert{length} of such a generalized cycle.  Now we recall some general statements.  

\begin{lemma}[\cite{broue98complex}*{Table~5}]\label{lem:gddn_parabolic_subgroups}
	Let $W$ be a parabolic subgroup of $G(d,d,n)$.  If $W$ is irreducible, then it is either isomorphic to $G(1,1,n')$ or to $G(d,d,n')$ for $n'\leq n$.  If $W$ is reducible, then it is isomorphic to a direct product of irreducible parabolic subgroups of $G(d,d,n)$.  
\end{lemma}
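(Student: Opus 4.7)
The plan is to reduce the statement to a combinatorial analysis of the reflecting hyperplanes of $G(d,d,n)$. By Steinberg's theorem, every parabolic subgroup $W_U$ (the pointwise stabiliser of a subspace $U\subseteq V$) is generated by those reflections of $G(d,d,n)$ whose reflecting hyperplane contains $U$. The reflections of $G(d,d,n)$ are precisely the transposition-type maps $t_{i,j}^{(s)}$ sending $e_i\mapsto\zeta^s e_j$ and $e_j\mapsto\zeta^{-s}e_i$, for $1\leq i<j\leq n$ and $s\in\mathbb{Z}/d\mathbb{Z}$, where $\zeta$ is a primitive $d\th$ root of unity; their reflecting hyperplanes are $H_{i,j}^{(s)}=\{x\in V\mid x_i=\zeta^s x_j\}$. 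Crucially, $G(d,d,n)$ contains no diagonal reflections, so these are the only hyperplanes in play, and it suffices to classify which families of them can jointly contain a given $U$.

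To organise this, I would associate to $U$ the labelled graph $\Gamma_U$ on the vertex set $[n]$, in which an edge $\{i,j\}$ carries each label $s\in\mathbb{Z}/d\mathbb{Z}$ for which $H_{i,j}^{(s)}\supseteq U$. A short linear-algebraic check shows that if two distinct labels $s\neq s'$ coexist on one edge, then $U$ lies inside $\{x_i=x_j=0\}$, which forces all $d$ labels to appear on that edge; hence each edge carries exactly $0$, $1$, or $d$ labels. Moreover, the fully-labelled condition propagates: if $\{i,j\}$ is fully labelled and $\{j,k\}$ carries any label, then $x_j=0$ on $U$ forces $x_k=0$ on $U$, so $\{j,k\}$ is also fully labelled. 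Consequently every connected component of $\Gamma_U$ is homogeneous, with either all edges singleton-labelled or all edges fully labelled.

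Next I would identify the subgroup contributed by each connected component $C$ of size $m\geq 2$. In the singleton-labelled case, the labels satisfy a cocycle condition and assemble into a consistent system of scalar identifications of the coordinates indexed by $C$; after a suitable rescaling of basis vectors the corresponding reflections become ordinary transpositions and generate a copy of $G(1,1,m)$. In the fully-labelled case, every $t_{i,j}^{(s)}$ with $i,j\in C$ and $s\in\mathbb{Z}/d\mathbb{Z}$ lies in $W_U$, and together these reflections generate a copy of $G(d,d,m)$ acting on $\mathrm{span}(e_i\mid i\in C)$. Singleton components contribute trivially, since $G(d,d,n)$ has no diagonal reflections. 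Taking the direct product across all components of size at least $2$ yields $W_U$; it is irreducible precisely when exactly one such component is present, matching the two alternatives in the statement.

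The main obstacle I expect is the rigidity step of the second paragraph: the dichotomy on the number of labels per edge, and the propagation of the fully-labelled condition within a component. These two facts together rule out all partial labellings (with strictly between $1$ and $d$ labels per edge), which would otherwise produce intermediate irreducible factors lying strictly between $G(1,1,m)$ and $G(d,d,m)$; this is the subtlety distinguishing the well-generated family $G(d,d,n)$ from the larger family $G(de,e,n)$. Once the rigidity is in place, matching each homogeneous component with either $G(1,1,m)$ or $G(d,d,m)$ reduces to a direct computation.
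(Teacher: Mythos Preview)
The paper does not prove this lemma at all: it is stated with a citation to \cite{broue98complex}*{Table~5} and used as a black box. Your proposal, by contrast, supplies an actual argument via Steinberg's theorem and a combinatorial analysis of the hyperplane arrangement of $G(d,d,n)$. So there is nothing to compare on the level of method; what remains is whether your argument is correct, and it essentially is.

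Your dichotomy (each edge of $\Gamma_U$ carries $0$, $1$, or $d$ labels) and the propagation of the fully-labelled condition along edges are both sound. One point you leave implicit but should make explicit: there is \emph{at most one} fully-labelled component. Indeed, if $C_1$ and $C_2$ were two such components, then $x_i=0$ on $U$ for every $i\in C_1\cup C_2$, so for any $i\in C_1$, $j\in C_2$, and any $s$ the equation $x_i=\zeta^s x_j$ holds on $U$; hence $\{i,j\}$ is an edge of $\Gamma_U$ and $C_1,C_2$ were not distinct components after all. This is exactly what guarantees that the decomposition has at most one $G(d,d,m)$ factor, matching the Brou\'e--Malle--Rouquier table. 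A similar check shows a fully-labelled component cannot be adjacent to a singleton-labelled one without forcing the latter to become fully labelled, so the homogeneity of components is genuinely a partition. With these small additions your sketch becomes a complete self-contained proof, which the paper does not attempt.
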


\begin{lemma}\label{lem:gddn_single_short_cycles}
	Let $w$ be a parabolic Coxeter element of $G(d,d,n)$.  If $w$ consists of a single simultaneous cycle of length $k$, then the interval $[\varepsilon,w]$ in $\bigl(G(d,d,n),\leq_{T}\bigr)$ is isomorphic to $\pnc{G(1,1,k)}{1}$.  
\end{lemma}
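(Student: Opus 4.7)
The plan is to identify the parabolic subgroup $W_w\leq G(d,d,n)$ for which $w$ is a Coxeter element, to verify that $W_w\cong G(1,1,k)$ as reflection groups, and then to conclude via Proposition~\ref{prop:noncrossing_partitions_independent}. By Proposition~\ref{prop:parabolic_coxeter_elements} such a $W_w$ exists, and one checks that it is precisely the pointwise stabilizer of the fixed subspace $V^w$. A standard property of this situation is that every reflection of $G(d,d,n)$ lying below $w$ already lies in $W_w$, so the interval $[\varepsilon,w]$ in $\bigl(G(d,d,n),\leq_T\bigr)$ coincides as a poset with its counterpart in $(W_w,\leq_T)$, namely the lattice $\pnc{W_w}{1}(w)$ from Section~\ref{sec:noncrossing_partitions}.

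Next I would pin down the rank and irreducibility of $W_w$. For the rank, the factorization $w=r_1r_2\cdots r_{k-1}$ with $r_i=\Bigll\colint{k_i}{t_i}\;\colint{k_{i+1}}{t_{i+1}}\Bigrr$ a reflection of $G(d,d,n)$ yields $\ell_T(w)\leq k-1$, and the matching lower bound comes from the fact that $V^w$ has codimension $k-1$ in $V$.  Since $w$ is a Coxeter element of $W_w$, this forces $\rk(W_w)=k-1$. For irreducibility, observe that after a diagonal change of basis $w$ acts as a single $k$-cycle on the $k$-dimensional coordinate subspace $\mathrm{span}(e_{k_1},\ldots,e_{k_k})$, and the centralizer of a $k$-cycle in $\mathfrak{S}_k$ is cyclic; this rules out any non-trivial decomposition of $w$ into commuting factors with orthogonal moving spaces, and hence forbids $W_w$ from splitting as a non-trivial direct product of reflection groups.

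It then remains to invoke Lemma~\ref{lem:gddn_parabolic_subgroups}, which narrows the isomorphism type of $W_w$ down to $G(1,1,k)$ or (for $k\geq 3$) $G(d,d,k-1)$. The order of $w$ distinguishes these: as a permutation in $\mathfrak{S}_{dn}$ consisting of $d$ disjoint $k$-cycles, $w$ has order $k$, whereas a Coxeter element of $G(d,d,k-1)$ has order equal to the Coxeter number of that group, namely $d$ if $k=3$ and $(k-2)d$ if $k\geq 4$. A short calculation shows these coincide with $k$ only for $(d,k)\in\{(3,3),(2,4)\}$, and in those two cases the accidental isomorphisms $G(3,3,2)\cong\mathfrak{S}_3=G(1,1,3)$ and $G(2,2,3)\cong\mathfrak{S}_4=G(1,1,4)$ make the two options in Lemma~\ref{lem:gddn_parabolic_subgroups} coincide. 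Thus $W_w\cong G(1,1,k)$ in all cases; since this isomorphism sends $w$ to a Coxeter element of $G(1,1,k)$, Proposition~\ref{prop:noncrossing_partitions_independent} yields $[\varepsilon,w]\cong\pnc{G(1,1,k)}{1}$.  The main obstacle is precisely this isomorphism-type identification: Lemma~\ref{lem:gddn_parabolic_subgroups} leaves two candidate families of the correct rank, and separating them requires a reasonably intrinsic invariant of $w$ —the order works, modulo the two sporadic coincidences above.
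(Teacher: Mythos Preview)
Your argument is correct, but it takes a different and more elaborate route than the paper's.  The paper observes directly that every reflection of $G(d,d,n)$ has the form $\Bigll\colint{a}{0}\;\colint{b}{s}\Bigrr$ with $a\neq b$, so any reflection must move two distinct underlying integers; consequently, for a simultaneous cycle $w=\Bigll\colint{k_{1}}{t_{1}}\;\ldots\;\colint{k_{r}}{t_{r}}\Bigrr$ the reflections (and more generally the elements) below $w$ are exactly the ``colored versions'' of the elements below the underlying $k$-cycle $(k_{1}\;\ldots\;k_{r})$ in $\mathfrak{S}_{n}$, with the colors forced by those appearing in $w$.  This gives the isomorphism $[\varepsilon,w]\cong\pnc{G(1,1,k)}{1}$ immediately, and the reference to Lemma~\ref{lem:gddn_parabolic_subgroups} is then essentially a consistency check.

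Your approach instead works abstractly: you pin down the rank and irreducibility of $W_{w}$, invoke Lemma~\ref{lem:gddn_parabolic_subgroups} to leave two candidates $G(1,1,k)$ and $G(d,d,k-1)$, and separate them by the order of $w$, carefully dealing with the two accidental coincidences $(d,k)\in\{(3,3),(2,4)\}$.  This is perfectly valid and arguably more systematic---the same template would handle other shapes of $w$---but it is longer and the final case split is precisely what the paper's concrete observation about reflections avoids.  In short: the paper exploits the explicit form of reflections in $G(d,d,n)$ to sidestep the classification entirely, while you use the classification and then must argue your way back out of the ambiguity it leaves.
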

\begin{proof}
	By definition, if $w$ consists of a single simultaneous cycle of length $k$, then the isomorphism type of the interval $[\varepsilon,w]$ in $\bigl(G(d,d,n),\leq_{T}\bigr)$ is uniquely determined by the underlying permutation, regardless of the colors involved.  This follows from the fact that the reflections in $G(d,d,n)$ are of the form $\colref{a}{b}{s}$ for $1\leq a<b\leq n$ and $0\leq s<d$.  In particular, whenever we change the color of some integer, we have to permute it with another integer (and change its color accordingly).  Now Lemma~\ref{lem:gddn_parabolic_subgroups} implies that the parabolic subgroup of $G(d,d,n)$, in which $w$ is a Coxeter element is isomorphic to $G(1,1,k)$, which concludes the proof.
\end{proof}

In what follows we consider the Coxeter element $\gamma\in G(d,d,n)$ represented by the monomial matrix
\begin{equation}\label{eq:gddn_coxeter_matrix}
	\left(\begin{array}{ccccccc}0 & 0 & 0 & \cdots & 0 & \zeta_{d} & 0\\ 1 & 0 & 0 & \cdots & 0 & 0 & 0\\ 0 & 1 & 0 & \cdots & 0 & 0 & 0\\ \vdots & \vdots & \vdots & & \vdots & \vdots & \vdots\\ 0 & 0 & 0 & \cdots & 1 & 0 & 0\\ 0 & 0 & 0 & \cdots & 0 & 0 & \zeta_{d}^{d-1}\end{array}\right),
\end{equation}
where $\zeta_{d}=e^{2i\pi/d}$ is a primitive $d\th$ root of unity.  See for instance \cite{muehle15el}*{Section~3.3} for a justification that this is indeed a Coxeter element of $G(d,d,n)$.  Moreover, $\gamma$ can be expressed as a product of two balanced cycles, namely
\begin{equation}\label{eq:gddn_coxeter_element}
	\gamma = \Bigl[\colint{1}{0}\;\colint{2}{0}\;\ldots\;\colint{(n-1)}{0}\Bigr]\Bigl[\colint{n}{0}\Bigr]_{d-1}.
\end{equation}
The next two lemmas describe the atoms and coatoms in $\pnc{G(d,d,n)}{1}(\gamma)$.  Let $T_{\gamma}=T\cap\nc{W}{1}(\gamma)$.  

\begin{lemma}[\cite{muehle15el}*{Proposition~3.6}]\label{lem:gddn_atoms}
	Let $\gamma$ be the Coxeter element of $G(d,d,n)$ as defined in \eqref{eq:gddn_coxeter_element}.  Then we have
	\begin{multline*}
		T_{\gamma} = \Bigl\{\colref{a}{b}{s}\mid 1\leq a<b<n, s\in\{0,d-1\}\Bigr\}\;\cup\\
			\Bigl\{\colref{a}{n}{s}\mid 1\leq a<n, 0\leq s<d\Bigr\}.
	\end{multline*}
\end{lemma}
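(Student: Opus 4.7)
The plan is to exploit the absolute length formula $\ell_T(w) = n - c_s(w)$ for $w \in G(d,d,n)$, where $c_s(w)$ counts the simultaneous cycles of $w$ (including simultaneous $1$-cycles on indices fixed by $w$). This follows from Lemma~\ref{lem:gddn_single_short_cycles} (a simultaneous cycle of underlying length $k$ has absolute length $k-1$) together with its balanced-cycle counterpart (a balanced cycle of underlying length $k$ has absolute length $k$), both consequences of Lemma~\ref{lem:gddn_parabolic_subgroups}. Since $\gamma$ is the product of two balanced cycles (of underlying lengths $n-1$ and $1$) and has no simultaneous cycles, $\ell_T(\gamma) = n$. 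Because $t^{-1} = t$ for a reflection, the condition $t \leq_T \gamma$ is equivalent to $\ell_T(t\gamma) = n - 1$, i.e.\ $c_s(t\gamma) = 1$.

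From this criterion I would carry out a case analysis on $t = \colref{a}{b}{s}$, tracing the orbits of $t\gamma$ directly on the colored set $\bigl\{\colint{k}{i} \mid 1 \leq k \leq n,\, 0 \leq i < d\bigr\}$. When $b = n$, the transposition in $t$ bridges the two distinct balanced cycles of $\gamma$, and a direct trace shows they merge into a single simultaneous cycle of underlying length $n$ spanning all indices, yielding $c_s(t\gamma) = 1$. When $b < n$, the cycle $\Bigl[\colint{n}{0}\Bigr]_{d-1}$ is untouched by $t$ and contributes no simultaneous cycle, so exactly one simultaneous cycle must arise from the action on $\{1,\ldots,n-1\}$. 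For $s = 0$, the trace yields the clean decomposition
\begin{equation*}
	t\gamma = \Bigll \colint{a}{0}\;\colint{(a+1)}{0}\;\ldots\;\colint{(b-1)}{0}\Bigrr \cdot \Bigl[\colint{1}{0}\;\ldots\;\colint{(a-1)}{0}\;\colint{b}{0}\;\ldots\;\colint{(n-1)}{0}\Bigr]_1 \cdot \Bigl[\colint{n}{0}\Bigr]_{d-1},
\end{equation*}
so $c_s(t\gamma) = 1$; the case $s = d-1$ produces an analogous decomposition with one simultaneous cycle, either by a direct trace or by deduction from the $s=0$ case via conjugation with a color-reversing automorphism of $G(d,d,n)$.

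The main obstacle is the remaining range $1 \leq s \leq d - 2$, which is nonempty only when $d \geq 3$: here one must show $c_s(t\gamma) \neq 1$. The key observation is that the nontrivial color shift $s$ prevents any orbit of $t\gamma$ from closing within a single color class. Tracking the color superscript along an orbit of an index in $\{a,\ldots,b-1\}$, each full turn around accumulates a net shift of $s \not\equiv 0 \pmod{d}$, so the orbit visits $d/\gcd(d,s)$ distinct colors before returning; the same phenomenon applies to the complementary indices $\{1,\ldots,a-1,b,\ldots,n-1\}$. Consequently $t\gamma$ decomposes purely into balanced cycles, giving $c_s(t\gamma) = 0$ and hence $\ell_T(t\gamma) = n \neq n - 1$, so $t \not\leq_T \gamma$. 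Making this last step airtight — turning the informal ``color-shift accumulation'' argument into a clean statement about orbit lengths — is where the case analysis demands the most care.
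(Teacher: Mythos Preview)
The paper does not prove this lemma but cites it from \cite{muehle15el}; the appendix here and that reference work via direct fixed-space computations, which amounts to your computation of $c_s$ since $\dim\text{Fix}(w)=c_s(w)$ for every $w\in G(d,d,n)$.  So your combinatorial cycle-tracing is equivalent in spirit, but there is a genuine gap in the foundation: the identity $\ell_T(w)=n-c_s(w)$ is \emph{false} for general $w\in G(d,d,n)$.  Take $w=\bigl[\colint{1}{0}\bigr]_1\bigl[\colint{2}{0}\bigr]_1\bigl[\colint{3}{0}\bigr]_1\in G(3,3,3)$: here $c_s(w)=0$, yet $\det(w)=\zeta_3^{\,3}=1$ while every reflection of $G(d,d,n)$ has determinant $-1$, forcing $\ell_T(w)$ to be even; combined with $\ell_T(w)\geq\text{codim}\,\text{Fix}(w)=3$ this gives $\ell_T(w)=4$, not $3$.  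Lemma~\ref{lem:gddn_single_short_cycles} does not rescue this, since it concerns parabolic Coxeter elements and $t\gamma$ need not be one when $t\not\leq_T\gamma$.

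The repair is to split directions.  For $t$ \emph{outside} the claimed set you only need the inequality $\ell_T(t\gamma)\geq n-c_s(t\gamma)=n$, which holds for every element; your orbit analysis showing $c_s(t\gamma)=0$ then suffices.  For $t$ \emph{inside} the claimed set, your explicit decompositions exhibit $t\gamma$ as a Coxeter element in a parabolic subgroup of type $G(1,1,n)$ (when $b=n$) or $G(d,d,n-b+a)\times G(1,1,b-a)$ (when $b<n$), whence $\ell_T(t\gamma)=n-1$ follows from the rank of that parabolic.

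There is also a smaller slip in the exclusion case $b<n$, $1\leq s\leq d-2$: the two orbits accumulate \emph{different} color shifts --- a direct trace gives $-s$ on $\{a,\ldots,b-1\}$ and $s+1$ on $\{1,\ldots,a-1,b,\ldots,n-1\}$ --- and the latter being nonzero modulo $d$ uses precisely $s\leq d-2$, not merely $s\neq 0$.  You should compute both shifts rather than invoke ``the same phenomenon''.
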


\begin{lemma}[\cite{muehle15el}*{Lemma~3.13}]\label{lem:gddn_coatoms}
	Let $\gamma$ be the Coxeter element of $G(d,d,n)$ as defined in \eqref{eq:gddn_coxeter_element}, and let $t\in T_{\gamma}$.  If $t=\colref{a}{b}{0}$ for $1\leq a<b<n$, then we have
	\begin{displaymath}
		\gamma t = \Bigl[\colint{1}{0}\;\ldots\;\colint{a}{0}\;\colint{(b+1)}{0}\;\ldots\;\colint{(n-1)}{0}\Bigr]\Bigl[\colint{n}{0}\Bigr]_{d-1}\Bigll\colint{(a+1)}{0}\;\ldots\;\colint{b}{0}\Bigrr.
	\end{displaymath}
	If $t=\colref{a}{b}{d-1}$ for $1\leq a<b<n$, then we have
	\begin{displaymath}
		\gamma t = \Bigll\colint{1}{0}\;\ldots\;\colint{a}{0}\;\colint{(b+1)}{d-1}\;\ldots\;\colint{(n-1)}{d-1}\Bigrr\Bigl[\colint{(a+1)}{0}\;\ldots\;\colint{b}{0}\Bigr]\Bigl[\colint{n}{0}\Bigr]_{d-1}.
	\end{displaymath}
	If $t=\colref{a}{n}{s}$ for $1\leq a<n$ and $0\leq s<d$, then we have
	\begin{displaymath}
		\gamma t = \Bigll\colint{1}{0}\;\ldots\;\colint{a}{0}\;\colint{n}{s-1}\;\colint{(a+1)}{d-1}\;\ldots\;\colint{(n-1)}{d-1}\Bigrr.
	\end{displaymath}
\end{lemma}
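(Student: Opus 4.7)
My plan is a direct computation: compute $\gamma t$ element-by-element for each reflection $t \in T_{\gamma}$ classified by Lemma~\ref{lem:gddn_atoms}, and read off the cycle decomposition.

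The building blocks are explicit. From \eqref{eq:gddn_coxeter_element} one has $\gamma(\colint{k}{j}) = \colint{(k+1)}{j}$ for $1 \leq k \leq n-2$, $\gamma(\colint{(n-1)}{j}) = \colint{1}{j+1}$, and $\gamma(\colint{n}{j}) = \colint{n}{j-1}$, with color arithmetic modulo $d$. A reflection $t = \colref{a}{b}{s}$ swaps $\colint{a}{j} \leftrightarrow \colint{b}{j+s}$ for every $j$ and fixes every other colored integer, so $\gamma t$ agrees with $\gamma$ outside columns $a$ and $b$ and satisfies $\gamma t(\colint{a}{j}) = \gamma(\colint{b}{j+s})$ and $\gamma t(\colint{b}{j}) = \gamma(\colint{a}{j-s})$.

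I then treat the three families of reflections of Lemma~\ref{lem:gddn_atoms} in turn. For $t = \colref{a}{b}{0}$ with $1 \leq a < b < n$, tracing the orbit of $\colint{1}{0}$ jumps $\colint{a}{0} \to \colint{(b+1)}{0}$ and eventually returns via $\colint{(n-1)}{0} \to \colint{1}{1}$, closing only after exhausting all colors; this assembles into the balanced cycle on $\{1,\ldots,a,b+1,\ldots,n-1\}$. The orbits through $\{a+1,\ldots,b\}$ close within each color class (since $\colint{b}{j} \to \colint{(a+1)}{j}$), producing $d$ parallel cycles of length $b-a$ that combine into the simultaneous cycle claimed, while column $n$ is fixed by $t$ and still supports $\Bigl[\colint{n}{0}\Bigr]_{d-1}$. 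The case $t = \colref{a}{b}{d-1}$ is analogous, but now the swap shifts colors by $-1$: this has the effect of exchanging the roles of the two intervals, so that $\{a+1,\ldots,b\}$ becomes a single balanced cycle of length $(b-a)d$, while $\{1,\ldots,a,b+1,\ldots,n-1\}$ becomes a simultaneous cycle carrying the color shift $d-1$ inside its list. For $t = \colref{a}{n}{s}$ the two cycles of $\gamma$ merge: tracing from $\colint{1}{0}$ runs through $\colint{a}{0} \to \colint{n}{s-1} \to \colint{(a+1)}{d-1} \to \cdots \to \colint{(n-1)}{d-1} \to \colint{1}{0}$, and propagating through all color shifts yields exactly the single simultaneous cycle on $n$ elements stated in the lemma.

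The main bookkeeping issue is the tracking of color indices modulo $d$; in particular, the ``$s-1$'' in Case~3 comes entirely from the shift $\gamma(\colint{n}{j}) = \colint{n}{j-1}$, which in turn reflects the $\zeta_{d}^{d-1}$ entry in \eqref{eq:gddn_coxeter_matrix}. A few degenerate situations ($b = a+1$, $b = n-1$, $s = 0$) deserve a separate check, but each simply reduces the length of one of the claimed factors to $0$ or $1$ without altering the formula, so they require no special argument.
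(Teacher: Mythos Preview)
Your direct computation is correct in every case, including the degenerate ones you flag; this is exactly the kind of orbit-tracing argument one would expect, and the color bookkeeping is handled properly. Note that the paper does not actually prove this lemma but quotes it from \cite{muehle15el}*{Lemma~3.13}, so there is no in-paper proof to compare against; your argument would serve as a self-contained verification.
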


The following lemma describes a translation principle that simplifies computations in noncrossing partition lattices.

\begin{lemma}\label{lem:bottom_intervals}
	Let $[x,y]$ be a non-singleton interval in $\pnc{G(d,d,n)}{1}$.  If $u\leq_{T}x$, then the map 
	\begin{displaymath}	
		f_{u}:[x,y]\to[u^{-1}x,u^{-1}y], \quad w\mapsto u^{-1}w
	\end{displaymath}
	is a poset isomorphism.
\end{lemma}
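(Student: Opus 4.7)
The plan is to derive the lemma directly from the characterization of the absolute order, $u\leq_T v$ iff $\ell_T(v)=\ell_T(u)+\ell_T(u^{-1}v)$, together with transitivity of $\leq_T$ and the triangle inequality $\ell_T(ab)\leq\ell_T(a)+\ell_T(b)$ for absolute length, both of which hold in any group generated by reflections. No features of $G(d,d,n)$ beyond those will be used.

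First I would check that $f_u$ is well-defined. Let $w\in[x,y]$. Transitivity applied to $u\leq_T x\leq_T w\leq_T y$ yields $u\leq_T w$ and $u\leq_T y$, so $\ell_T(u^{-1}w)=\ell_T(w)-\ell_T(u)$, and analogously with $w$ replaced by $x$ or $y$. Combined with $\ell_T(w)=\ell_T(x)+\ell_T(x^{-1}w)$ this gives $\ell_T(u^{-1}w)=\ell_T(u^{-1}x)+\ell_T(x^{-1}w)$. Since $(u^{-1}x)^{-1}(u^{-1}w)=x^{-1}w$, this is precisely the condition $u^{-1}x\leq_T u^{-1}w$. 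The same computation with $w$ replaced by $y$ gives $u^{-1}w\leq_T u^{-1}y$.

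Next I would construct the inverse $g:[u^{-1}x,u^{-1}y]\to[x,y]$ by $g(v)=uv$. The technical heart of the argument is to show that $g$ is well-defined. Given $v\in[u^{-1}x,u^{-1}y]$, chaining the reduced factorizations of $y$ through $u$ (via $u\leq_T y$) and of $u^{-1}y$ through $v$ (via $v\leq_T u^{-1}y$) expresses $y$ as a product $u\cdot v\cdot(v^{-1}u^{-1}y)$ whose three $T$-lengths sum to $\ell_T(y)$. Applying the triangle inequality to $y=(uv)\cdot((uv)^{-1}y)$ then pincers both $\ell_T(uv)=\ell_T(u)+\ell_T(v)$ and $\ell_T(y)=\ell_T(uv)+\ell_T((uv)^{-1}y)$, so $u\leq_T uv$ and $uv\leq_T y$. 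The relation $x\leq_T uv$ is then immediate from $\ell_T(uv)=\ell_T(u)+\ell_T(v)$ together with the defining identity $\ell_T(v)=\ell_T(u^{-1}x)+\ell_T(x^{-1}uv)$ of $u^{-1}x\leq_T v$. Because $g\circ f_u$ and $f_u\circ g$ are visibly the identities, $f_u$ is a bijection.

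Finally, order preservation in both directions follows from the same computation used in the first step: for $w_1,w_2\in[x,y]$, the identity $(u^{-1}w_1)^{-1}(u^{-1}w_2)=w_1^{-1}w_2$ combined with $\ell_T(u^{-1}w_i)=\ell_T(w_i)-\ell_T(u)$ shows that $\ell_T(w_2)=\ell_T(w_1)+\ell_T(w_1^{-1}w_2)$ is equivalent to $\ell_T(u^{-1}w_2)=\ell_T(u^{-1}w_1)+\ell_T(w_1^{-1}w_2)$, i.e., $w_1\leq_T w_2\iff u^{-1}w_1\leq_T u^{-1}w_2$. The only mildly delicate step is the pincer estimate producing $\ell_T(uv)=\ell_T(u)+\ell_T(v)$, since without this $g$ might a priori leave $[x,y]$; everything else is bookkeeping with the length identities.
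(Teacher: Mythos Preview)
Your proof is correct and follows the same underlying idea as the paper's: both arguments reduce everything to the defining identity $a\leq_T b\iff\ell_T(b)=\ell_T(a)+\ell_T(a^{-1}b)$ together with $\ell_T(u^{-1}w)=\ell_T(w)-\ell_T(u)$, which holds for all $w\in[x,y]$ by transitivity of $\leq_T$. The paper's version is terser: it just records the chain of equivalences
\[
a\leq_T b \;\Longleftrightarrow\; \ell_T(b)=\ell_T(a)+\ell_T(a^{-1}b)\;\Longleftrightarrow\;\ell_T(u^{-1}b)=\ell_T(u^{-1}a)+\ell_T\bigl((u^{-1}a)^{-1}u^{-1}b\bigr)\;\Longleftrightarrow\; u^{-1}a\leq_T u^{-1}b
\]
for $a,b\in[x,y]$, and leaves well-definedness and bijectivity implicit. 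Your version is more careful on exactly the point you flagged as ``mildly delicate'': you explicitly verify that the inverse map $v\mapsto uv$ lands back in $[x,y]$ via the pincer argument producing $\ell_T(uv)=\ell_T(u)+\ell_T(v)$. That step is genuinely needed for surjectivity and is glossed over in the paper, so your write-up is in fact the more complete of the two.
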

\begin{proof}
	Let $x\leq a\leq b\leq y$.  Since $u\leq_{T}x$, we obtain
	\begin{align*}
		a\leq_{T}b & \quad\Longleftrightarrow\quad \ell_{T}(b)=\ell_{T}(a)+\ell_{T}(a^{-1}b)\\
		& \quad\Longleftrightarrow\quad \ell_{T}(u)+\ell_{T}(u^{-1}b) = \ell_{T}(u)+\ell_{T}(u^{-1}a)+\ell_{T}(a^{-1}b)\\
		& \quad\Longleftrightarrow\quad \ell_{T}(u^{-1}b) = \ell_{T}(u^{-1}a)+\ell_{T}\bigl((u^{-1}a)^{-1}u^{-1}b\bigr)\\
		& \quad\Longleftrightarrow\quad u^{-1}a\leq_{T}u^{-1}b.
	\end{align*}
\end{proof}

\subsection{A Symmetric Decomposition of $\pnc{G(1,1,n)}{1}$}
	\label{sec:noncrossing_decomposition}
In order to prove that $\pnc{G(d,d,n)}{1}$ admits a symmetric Boolean decomposition for $d,n\geq 2$, we generalize the core idea of \cite{simion91on}*{Theorem~2} which states that $\pnc{G(1,1,n)}{1}(c)$ admits a symmetric Boolean decomposition, where $c=(1\;2\;\ldots\;n)$ is a long cycle in $G(1,1,n)\cong\mathfrak{S}_{n}$.  For $i\in[n]$ define $R_{i}=\bigl\{u\in\nc{G(1,1,n)}{1}(c)\mid u(1)=i\bigr\}$, and let $\RR_{i}=(R_{i},\leq_{T})$.  Moreover, let $\mathbf{2}$ denote the $2$-element chain, let $\uplus$ denote disjoint set union, and let $\RR_{1}\uplus\RR_{2}$ denote the subposet of $\pnc{G(1,1,n)}{1}(c)$ induced by the set $R_{1}\uplus R_{2}$.  We have the following result. 

\begin{theorem}[\cite{simion91on}*{Theorem~2}]\label{thm:g11n_decomposition}
	For $n>0$ we have $\nc{G(1,1,n)}{1}(c)=\biguplus_{i=1}^{n}{R_{i}}$.  Moreover, we have $\RR_{1}\uplus\RR_{2}\cong\mathbf{2}\times\pnc{G(1,1,n-1)}{1}$ and $\RR_{i}\cong\pnc{G(1,1,i-2)}{1}\times\pnc{G(1,1,n-i+1)}{1}$ for $3\leq i\leq n$.  This decomposition is symmetric.
\end{theorem}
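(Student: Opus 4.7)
The plan is to use the classical combinatorial realization of $\pnc{G(1,1,n)}{1}(c)$ as the refinement lattice of noncrossing set partitions of $[n]$: each $u\leq_{T}c$ is determined by its cycle partition, with the cycle containing any element listed in increasing cyclic order. Under this realization, $u(1)=i$ is equivalent to the assertion that the block of $u$ containing $1$ has $i$ as its smallest element distinct from $1$. The disjointness $\nc{G(1,1,n)}{1}(c)=\biguplus_{i=1}^{n}R_{i}$ is then immediate, since every $u$ sends $1$ to a unique value in $[n]$.

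The main step is to show that each $R_{i}$ is a closed interval of $\pnc{G(1,1,n)}{1}(c)$ and to describe it concretely. First I would verify convexity: if $u\leq_{T}w\leq_{T}v$ with $u,v\in R_{i}$, then the block of $1$ in $w$ contains $\{1,i\}$ (inherited from $u$) yet avoids $\{2,\ldots,i-1\}$ (inherited from $v$), forcing $w(1)=i$. I would then exhibit the extremes: for $i\geq 2$ the minimum is the transposition $v_{i}=(1\,i)$, while the maximum $w_{i}$ is the permutation whose nonsingleton blocks are $\{1,i,i+1,\ldots,n\}$ and $\{2,\ldots,i-1\}$ (the latter omitted when $i=2$); for $i=1$ the interval is $[\varepsilon,(2\,3\,\ldots\,n)]$.

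To identify $R_{i}$ up to isomorphism, I would translate each interval to the bottom of the lattice via Lemma~\ref{lem:bottom_intervals}: for $i\geq 3$ a direct computation gives $v_{i}^{-1}w_{i}=(i\,i+1\,\ldots\,n)(2\,3\,\ldots\,i-1)$, a product of two disjoint cycles on complementary supports. Combining Lemma~\ref{lem:gddn_parabolic_subgroups} (to split the enclosing parabolic subgroup into its irreducible factors) with Lemma~\ref{lem:gddn_single_short_cycles} applied to each factor yields $\RR_{i}\cong\pnc{G(1,1,n-i+1)}{1}\times\pnc{G(1,1,i-2)}{1}$. The cases $i=1,2$ analogously yield $\RR_{i}\cong\pnc{G(1,1,n-1)}{1}$.

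Finally, to amalgamate $R_{1}$ and $R_{2}$ into $\mathbf{2}\times\pnc{G(1,1,n-1)}{1}$, I would define $\phi\colon R_{1}\to R_{2}$ to be the map that merges the singleton block $\{1\}$ of $u$ with the block containing $2$. This $\phi$ is an order-isomorphism, and each $u\in R_{1}$ is covered in $\pnc{G(1,1,n)}{1}(c)$ by $\phi(u)$; together these facts supply the $\mathbf{2}$-factor. Symmetry then reduces to a rank count: because the absolute length on $\mathfrak{S}_{n}$ equals $n$ minus the cycle count, $\rk(v_{i})+\rk(w_{i})=1+(n-2)=n-1$ for $i\geq 3$, matching $\rk\bigl(\pnc{G(1,1,n)}{1}\bigr)$, and the extremes of $\RR_{1}\uplus\RR_{2}$ are $\varepsilon$ and $c$ with ranks $0$ and $n-1$. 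The most delicate step will be confirming that $R_{i}$ is genuinely an interval with the claimed endpoints and that $\phi$ preserves order while producing the correct covering relations; after that, the product decomposition of $[\varepsilon,v_{i}^{-1}w_{i}]$ is routine given the cited lemmas.
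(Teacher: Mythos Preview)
The paper does not supply its own proof of this theorem; it is quoted from \cite{simion91on}*{Theorem~2} and used as a black box for the subsequent arguments.  Your sketch is correct and is essentially the Simion--Ullman argument as summarized in the introduction (group elements by the smallest integer sharing a block with~$1$), recast in the permutation language of the present paper and parallel to the proof of Lemma~\ref{lem:first_chunk}.  One small caveat: Lemmas~\ref{lem:gddn_parabolic_subgroups} and~\ref{lem:gddn_single_short_cycles} are stated under the standing hypothesis $d,n\geq 2$ of Section~\ref{sec:setup}, so invoking them for $G(1,1,n)$ is formally outside their scope; the analogous statements for $\mathfrak{S}_{n}$ (that the interval below a $k$-cycle is isomorphic to $\pnc{G(1,1,k)}{1}$, and that intervals factor over disjoint cycles) are classical and could be cited directly.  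Also, for $i=3$ the block $\{2,\ldots,i-1\}=\{2\}$ is a singleton rather than omitted, but this does not affect the argument.
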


Theorem~\ref{thm:g11n_decomposition} has the following immediate corollary.

\begin{corollary}\label{cor:g11n_sbd}
	For $n>0$ the lattice $\pnc{G(1,1,n)}{1}$ admits a symmetric Boolean decomposition, and hence a symmetric chain decomposition.
\end{corollary}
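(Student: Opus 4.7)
The plan is to proceed by induction on $n$. For the base cases $n=1$ and $n=2$, the lattice $\pnc{G(1,1,n)}{1}$ is either a one-element lattice or the two-element chain, hence itself Boolean, and the one-part decomposition consisting of the whole lattice is already symmetric and Boolean. For the inductive step, I would assume the result for every $m<n$ and invoke Theorem~\ref{thm:g11n_decomposition}, which presents $\pnc{G(1,1,n)}{1}(c)$ as a symmetric decomposition whose parts are $\RR_{1}\uplus\RR_{2}\cong\mathbf{2}\times\pnc{G(1,1,n-1)}{1}$ and, for $3\leq i\leq n$, $\RR_{i}\cong\pnc{G(1,1,i-2)}{1}\times\pnc{G(1,1,n-i+1)}{1}$.

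The main auxiliary fact I would establish is that the class of graded posets admitting a symmetric Boolean decomposition is closed under direct products. Indeed, if $\PP=\biguplus_{a}B_{a}$ and $\QQ=\biguplus_{b}B'_{b}$ are symmetric Boolean decompositions, then $\PP\times\QQ=\biguplus_{a,b}(B_{a}\times B'_{b})$ works: a product of Boolean lattices is Boolean, and additivity of the rank function in a direct product yields
\begin{displaymath}
	\rk_{\PP\times\QQ}(p,p')+\rk_{\PP\times\QQ}(q,q')=\rk(\PP)+\rk(\QQ)=\rk(\PP\times\QQ)
\end{displaymath}
whenever $p,q$ are paired minimum/maximum of some $B_{a}$ and $p',q'$ are paired minimum/maximum of some $B'_{b}$. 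Since $\mathbf{2}$ is itself Boolean, both $\mathbf{2}\times\pnc{G(1,1,n-1)}{1}$ and the products $\pnc{G(1,1,i-2)}{1}\times\pnc{G(1,1,n-i+1)}{1}$ receive symmetric Boolean decompositions from the inductive hypothesis.

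The remaining subtle point is to verify that these refinements stay symmetric with respect to the ambient lattice $\PP=\pnc{G(1,1,n)}{1}(c)$, not merely within the individual $\RR_{i}$. Because each $\RR_{i}$ has a unique minimum and maximum (as a product of lattices with unique minima and maxima) and each cover relation of $\RR_{i}$ is a cover relation of $\PP$, the restriction of $\rk_{\PP}$ to $\RR_{i}$ differs from $\rk_{\RR_{i}}$ by the constant $\rk_{\PP}(\min\RR_{i})$. Combined with the ambient symmetry $\rk_{\PP}(\min\RR_{i})+\rk_{\PP}(\max\RR_{i})=\rk(\PP)$ coming from Theorem~\ref{thm:g11n_decomposition}, a short computation shows that any paired minimum/maximum $(p',q')$ of a refined Boolean part again satisfies $\rk_{\PP}(p')+\rk_{\PP}(q')=\rk(\PP)$. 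The existence of a symmetric chain decomposition then follows immediately from Proposition~\ref{prop:sbd_implies_scd}. I do not expect any real obstacle here: the entire argument is a packaging of Theorem~\ref{thm:g11n_decomposition} with the product-closure property above.
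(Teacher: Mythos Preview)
Your proposal is correct and follows essentially the same approach as the paper: induction on $n$ using Theorem~\ref{thm:g11n_decomposition}, closure of symmetric Boolean decompositions under direct products (which the paper cites as \cite{petersen15eulerian}*{Observation~4.4} rather than reproving), and Proposition~\ref{prop:sbd_implies_scd}. You supply more detail than the paper---explicit base cases, a proof sketch of the product-closure fact, and a careful check that the refined decomposition remains symmetric in the ambient lattice---and the only item the paper mentions that you omit is an appeal to Proposition~\ref{prop:noncrossing_partitions_independent} to pass from the specific Coxeter element $c$ to an arbitrary one.
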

\begin{proof}
	This follows by induction on $n$ from Theorem~\ref{thm:g11n_decomposition} and the fact that symmetric Boolean decompositions are preserved under direct products~\cite{petersen15eulerian}*{Observation~4.4}, as well as Propositions~\ref{prop:sbd_implies_scd} and \ref{prop:noncrossing_partitions_independent}.
\end{proof}

\subsection{A First Decomposition of $\pnc{G(d,d,n)}{1}(\gamma)$}
	\label{sec:first_decomposition}
In the spirit of Theorem~\ref{thm:g11n_decomposition} we define 
\begin{displaymath}
	R_{i}^{(s)} = \Bigl\{u\in\nc{G(d,d,n)}{1}(\gamma)\mid u\bigl(\colint{1}{0}\bigr)=\colint{i}{s}\Bigr\}.
\end{displaymath}
for $i\in[n]$ and $s\in\{0,1,\ldots,d-1\}$.  It is immediately clear that
\begin{displaymath}
	\nc{G(d,d,n)}{1}(\gamma) = \biguplus_{i=1}^{n}\biguplus_{s=0}^{d-1}{R_{i}^{(s)}}.
\end{displaymath}

\begin{lemma}\label{lem:empty_chunks}
	The set $R_{i}^{(s)}$ is empty if and only if either $i=1$ and $2\leq s<d$, or $2\leq i<n$ and $1\leq s<d-1$.	
\end{lemma}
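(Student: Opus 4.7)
The plan is to verify non-emptiness by exhibiting explicit witnesses and emptiness by a structural analysis of the generalized cycle shapes that can appear in elements of $\pnc{G(d,d,n)}{1}(\gamma)$.

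For the non-empty cases, the identity $\varepsilon$ witnesses $R_{1}^{(0)}$; each atom $\colref{1}{i}{s}$ provided by Lemma~\ref{lem:gddn_atoms} witnesses the corresponding $R_{i}^{(s)}$, so $R_{i}^{(s)}$ is non-empty whenever $2\leq i<n$ with $s\in\{0,d-1\}$ or $i=n$ with $0\leq s<d$; and the single remaining case $R_{1}^{(1)}$ is witnessed by $u=\bigl[\colint{1}{0}\bigr]_{1}\bigl[\colint{n}{0}\bigr]_{d-1}$. This $u$ is a Coxeter element of a parabolic subgroup of type $G(d,d,2)$ on positions $\{1,n\}$, so $\ell_{T}(u)=2$, and a direct computation using the explicit action of $\gamma$ shows that $u^{-1}\gamma$ equals the simultaneous cycle $\Bigll\colint{1}{0}\;\colint{2}{0}\;\ldots\;\colint{(n-1)}{0}\Bigrr$ of absolute length $n-2$; hence $\ell_{T}(u)+\ell_{T}(u^{-1}\gamma)=n=\ell_{T}(\gamma)$, confirming $u\leq_{T}\gamma$, and $u\bigl(\colint{1}{0}\bigr)=\colint{1}{1}$ by inspection.

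For emptiness, let $u\in\nc{G(d,d,n)}{1}(\gamma)$ and let $C$ be the generalized cycle of $u$ containing $\colint{1}{0}$. By Proposition~\ref{prop:parabolic_coxeter_elements} and Lemma~\ref{lem:gddn_parabolic_subgroups}, $u$ decomposes as a product of Coxeter elements of irreducible parabolic factors of types $G(1,1,k)$ (contributing a single simultaneous cycle) or $G(d,d,k)$ (contributing a pair of disjoint balanced cycles whose parameters sum to $0\pmod d$). Since positions within a generalized cycle are distinct, the pair $\colint{i}{s}=u\bigl(\colint{1}{0}\bigr)$ is read off directly: either $C=\bigl[\colint{1}{0}\bigr]_{s'}$ is a length-$1$ balanced cycle and $(i,s)=(1,s')$, or the second entry of $C$ is $\colint{k_{2}}{t_{2}}$ with $(i,s)=(k_{2},t_{2})$ and $k_{2}\neq 1$. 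The key step is a structural claim on cycle shapes in $\pnc{G(d,d,n)}{1}(\gamma)$: inspection of Lemma~\ref{lem:gddn_coatoms} shows that every coatom has simultaneous cycles whose entries at positions $<n$ carry colors in $\{0,d-1\}$ (after normalising the first entry to have color $0$), balanced cycles not containing $n$ with parameter $1$ and all entries of color $0$, and, when present, a length-$1$ balanced cycle equal to $\bigl[\colint{n}{0}\bigr]_{d-1}$ paired in a $G(d,d,2)$-factor with some cycle involving position $1$; these restrictions are then propagated downward in the lattice by induction on $\ell_{T}(\gamma)-\ell_{T}(u)$, using Lemma~\ref{lem:bottom_intervals} to identify lower subintervals with noncrossing partition lattices of smaller $G(d,d,k)$-type, to which the inductive hypothesis applies. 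Granted this claim, the case analysis is immediate: if $C$ is simultaneous with $i<n$ one obtains $s\in\{0,d-1\}$; if $C$ is balanced of length $\geq 2$ then $i\neq 1$ and $s=t_{2}=0$; and if $C=\bigl[\colint{1}{0}\bigr]_{s'}$ the pairing constraint forces $s'=1$, giving $s=1$. These are precisely the non-empty combinations in the statement, so all other $(i,s)$ yield $R_{i}^{(s)}=\emptyset$.

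The main obstacle I anticipate is the inductive propagation of the cycle-shape restrictions: one must argue that each further reflection multiplication from $T_{\gamma}$ either leaves the cycle decomposition invariant, or splits a cycle in a way that preserves the constraints on colors and parameters. The special form of $T_{\gamma}$ from Lemma~\ref{lem:gddn_atoms}---in particular the restriction $s\in\{0,d-1\}$ for reflections $\colref{a}{b}{s}$ with $b<n$---is what enforces this invariant.
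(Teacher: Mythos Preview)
Your non-emptiness witnesses are exactly those the paper uses, and your verification that $u=\bigl[\colint{1}{0}\bigr]_{1}\bigl[\colint{n}{0}\bigr]_{d-1}$ lies below $\gamma$ is correct.

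The emptiness argument, however, has a real gap precisely where you flag it. The downward propagation of cycle-shape constraints is not just a technicality: when you pass from a coatom $w$ to $u\lessdot w$, the interval $[\varepsilon,w]$ is a product of smaller noncrossing partition lattices, but the factor of type $G(d,d,k)$ comes with its \emph{own} Coxeter element, and the ``colors in $\{0,d-1\}$'' constraint is tied to the specific form of $\gamma$ in \eqref{eq:gddn_coxeter_element}. You would have to formulate an inductive hypothesis that is stable under these parabolic restrictions and under the change of Coxeter element, and then check that the color constraints translate correctly through the isomorphism of Lemma~\ref{lem:bottom_intervals}. None of this is done, and your description of the coatom structure already contains small inaccuracies (the balanced cycle $\bigl[\colint{n}{0}\bigr]_{d-1}$ is paired with a balanced cycle of arbitrary length through position $1$, giving a $G(d,d,k)$-factor, not a $G(d,d,2)$-factor).

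The paper avoids this entirely with a much shorter argument. For $2\leq i<n$: if $x\in R_{i}^{(s)}$, then writing $x=\colref{1}{i}{s}x'$ with $x'\bigl(\colint{1}{0}\bigr)=\colint{1}{0}$ one has $\colref{1}{i}{s}\leq_{T}x\leq_{T}\gamma$, so this reflection lies in $T_{\gamma}$, and Lemma~\ref{lem:gddn_atoms} forces $s\in\{0,d-1\}$. For $i=1$ and $s\neq 0$: the cycle decomposition of $x$ must contain $\bigl[\colint{1}{0}\bigr]_{s}$, and since a single balanced cycle is not in $G(d,d,n)$, there is some $a$ with $\bigl[\colint{1}{0}\bigr]_{s}\bigl[\colint{a}{0}\bigr]_{d-s}\leq_{T}x\leq_{T}\gamma$; a one-page fixed-space computation (the paper's Lemma~\ref{lem:fixed_space_first_lemma}) then shows this rank-$2$ element is below $\gamma$ only when $a=n$ and $s=1$. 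This bypasses any need to control cycle shapes globally: you only need to produce a single small element below $x$ and test it against $\gamma$ directly.
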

\begin{proof}
	See Appendix~\ref{app:fixed_space}.
\end{proof}

A consequence of Lemma~\ref{lem:empty_chunks} is that we can write
\begin{equation}\label{eq:gddn_first_decomposition}
	\nc{G(d,d,n)}{1}(\gamma) = R_{1}^{(0)}\uplus R_{1}^{(1)}\uplus\biguplus_{i=2}^{n-1}\Bigl(R_{i}^{(0)}\uplus R_{i}^{(d-1)}\Bigr)\uplus\biguplus_{s=0}^{d-1}{R_{n}^{(s)}}.
\end{equation}
In what follows, we describe the isomorphism type of the subposets of $\pnc{G(d,d,n)}{1}(\gamma)$ induced by the nonempty sets $R_{i}^{(s)}$.  Let us abbreviate $\RR_{i}^{(s)}=\bigl(R_{i}^{(s)},\leq_{T}\bigr)$. 

\begin{lemma}\label{lem:first_chunk}
	For $n>2$ the poset $\RR_{1}^{(0)}\uplus\RR_{2}^{(0)}$ is isomorphic to $\mathbf{2}\times\pnc{G(d,d,n-1)}{1}$.  Moreover, its least element has length $0$ and its greatest element has length $n$. 
\end{lemma}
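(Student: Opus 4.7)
The plan is to construct an order isomorphism $\phi\colon \mathbf{2}\times\pnc{G(d,d,n-1)}{1}\to \RR_{1}^{(0)}\uplus\RR_{2}^{(0)}$ by left-multiplication by the atom $t:=\colref{1}{2}{0}\in T_{\gamma}$; for $n>2$, Lemma~\ref{lem:gddn_atoms} confirms that this is the unique reflection in $T_{\gamma}$ sending $\colint{1}{0}$ to $\colint{2}{0}$.

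First I would verify by a direct cycle computation, using \eqref{eq:gddn_coxeter_element}, that $\gamma':=t\gamma=\bigl[\colint{2}{0}\;\colint{3}{0}\;\ldots\;\colint{(n-1)}{0}\bigr]\bigl[\colint{n}{0}\bigr]_{d-1}$; in particular $\gamma=t\gamma'$ is a reduced factorization and $\gamma'$ is a Coxeter element of the parabolic subgroup $W'\cong G(d,d,n-1)$ that fixes $\colint{1}{0}$ pointwise (see Lemma~\ref{lem:gddn_parabolic_subgroups}). I then claim $\RR_{1}^{(0)}=[\varepsilon,\gamma']$ and $\RR_{2}^{(0)}=[t,\gamma]$. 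The inclusion $[\varepsilon,\gamma']\subseteq R_{1}^{(0)}$ is immediate because every element of $[\varepsilon,\gamma']$ lies in $W'$ and hence fixes $\colint{1}{0}$. Conversely, for $u\in R_{1}^{(0)}$, the fact that $\colint{1}{0}\in\mathrm{Fix}(u)$ forces every reflection below $u$ to lie in $T_{\gamma'}$, hence $u\leq_{T}\gamma'$. Proposition~\ref{prop:noncrossing_partitions_independent} then gives $\RR_{1}^{(0)}\cong\pnc{G(d,d,n-1)}{1}$. Lemma~\ref{lem:bottom_intervals} applied to the interval $[t,\gamma]$ with $u=t$ gives a poset isomorphism $[t,\gamma]\to[\varepsilon,\gamma']$ via $w\mapsto tw$, and the same kind of argument shows $\RR_{2}^{(0)}=[t,\gamma]$: for $w\in R_{2}^{(0)}$, the element $tw$ fixes $\colint{1}{0}$, and a short fixed-space computation gives $\dim\mathrm{Fix}(tw)=\dim\mathrm{Fix}(w)+1$, so that $\ell_{T}(tw)=\ell_{T}(w)-1$, and hence $t\leq_{T}w$.

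I would then define $\phi(0,u):=u$ and $\phi(1,u):=tu$ for $u\in[\varepsilon,\gamma']$. Bijectivity onto $R_{1}^{(0)}\uplus R_{2}^{(0)}$ follows from the two identifications, and order preservation within each level is immediate: on the $(0,\cdot)$ level by inheritance of order, on the $(1,\cdot)$ level via Lemma~\ref{lem:bottom_intervals}. The nontrivial check is the cross-level comparison: for $u,v\in[\varepsilon,\gamma']$, I claim $u\leq_{T}tv$ if and only if $u\leq_{T}v$. The forward direction is easy, since $v^{-1}tv$ is a reflection so $v\leq_{T}tv$, whence $u\leq_{T}v\leq_{T}tv$. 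For the converse, write $u^{-1}tv=(u^{-1}tu)(u^{-1}v)$; the conjugate reflection $u^{-1}tu$ does not fix $\colint{1}{0}$ (as $u\in W'$ fixes it) while $u^{-1}v\in W'$ does, and a fixed-space computation analogous to the one above yields $\ell_{T}(u^{-1}tv)=1+\ell_{T}(u^{-1}v)$; combined with $\ell_{T}(tv)=1+\ell_{T}(v)$, this unpacks $u\leq_{T}tv$ into $u\leq_{T}v$. Finally, no order relations go from $R_{2}^{(0)}$ down to $R_{1}^{(0)}$: any such $w\leq_{T}u$ would force $t\leq_{T}u$, contradicting $\colint{1}{0}\in\mathrm{Fix}(u)$ versus $\colint{1}{0}\notin\mathrm{Fix}(t)$.

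For the rank statement, $\phi(0,\varepsilon)=\varepsilon$ has rank $0$ and $\phi(1,\gamma')=t\gamma'=\gamma$ has rank $n$. The most delicate point of the plan is making the absolute-length identities $\ell_{T}(tw)=\ell_{T}(w)\pm1$ precise via explicit cycle-structure or fixed-space analysis in $G(d,d,n)$; everything else reduces cleanly to the orbit condition $u(\colint{1}{0})=\colint{i}{s}$ and standard interval manipulations in the noncrossing partition lattice.
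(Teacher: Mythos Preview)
Your approach is essentially the same as the paper's: both identify $R_{2}^{(0)}$ with the interval $[t,\gamma]$ and $R_{1}^{(0)}$ with $[\varepsilon,\gamma']$ where $\gamma'=t\gamma$, and use left-multiplication by $t=\colref{1}{2}{0}$ together with Lemma~\ref{lem:bottom_intervals} to pass between the two pieces. The paper is terser---after recording $x\lessdot_{T}tx$ for $x\in R_{1}^{(0)}$ it simply says ``the claim follows''---so your explicit check of the cross-level comparison $u\leq_{T}tv\Leftrightarrow u\leq_{T}v$ genuinely adds rigor to the direct-product identification.

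One step in your outline does not close as written. For the inclusion $R_{1}^{(0)}\subseteq[\varepsilon,\gamma']$ you argue that $\colint{1}{0}\in\mathrm{Fix}(u)$ forces every reflection below $u$ to lie in $T_{\gamma'}$, ``hence $u\leq_{T}\gamma'$.'' The first part is fine (if $r\leq_{T}u$ then $\mathrm{Fix}(u)\subseteq\mathrm{Fix}(r)$, so $r\in W'$), but knowing that each reflection below $u$ individually satisfies $r\leq_{T}\gamma'$ does not by itself yield $u\leq_{T}\gamma'$. The cleanest route---and the one the paper implicitly takes---is to reverse the order: first establish $R_{2}^{(0)}=[t,\gamma]$ via your fixed-space computation $\dim\mathrm{Fix}(tw)=\dim\mathrm{Fix}(w)+1$ for $w\in R_{2}^{(0)}$, and then read off $R_{1}^{(0)}=[\varepsilon,\gamma']$ as the image of $[t,\gamma]$ under the isomorphism $w\mapsto tw$ of Lemma~\ref{lem:bottom_intervals}.
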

\begin{proof}
	The poset $\RR_{2}^{(0)}$ clearly constitutes a closed interval of $\pnc{G(d,d,n)}{1}(\gamma)$, since $\colref{1}{2}{0}$ and $\gamma$ are both contained in $R_{2}^{(0)}$.  It then follows from Lemma~\ref{lem:bottom_intervals} that the map $x\mapsto\colref{1}{2}{0}x$ is a poset isomorphism from $R_{2}^{(0)}$ to $R_{1}^{(0)}$.  Since the greatest element of $\RR_{1}^{(0)}$ is $\Bigl[\colint{2}{0}\;\cdots\;\colint{(n\!-\!1)}{0}\Bigr]\Bigl[\colint{n}{0}\Bigr]_{d-1}$, we get $\RR_{1}^{(0)}\cong\pnc{G(d,d,n-1)}{1}$.  Since $x\lessdot_{T}\colref{1}{2}{0}x$ for every $x\in R_{1}^{(0)}$, the claim follows.
\end{proof}

\begin{lemma}\label{lem:second_chunk}
	The poset $\RR_{n}^{(s)}$ is isomorphic to $\pnc{G(1,1,n-1)}{1}$ for $0\leq s<d$.  Morever, its least element has length $1$ and its greatest element has length $n-1$.  
\end{lemma}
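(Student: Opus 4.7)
The plan is to identify $\RR_n^{(s)}$ as a closed interval of $\pnc{G(d,d,n)}{1}(\gamma)$ and then reduce to $\pnc{G(1,1,n-1)}{1}$ via the translation principle (Lemma~\ref{lem:bottom_intervals}) combined with the single-cycle identification (Lemma~\ref{lem:gddn_single_short_cycles}), mimicking the strategy used in Lemma~\ref{lem:first_chunk}. The two candidate endpoints are $\colref{1}{n}{s}$, which lies in $T_\gamma$ by Lemma~\ref{lem:gddn_atoms} and clearly sends $\colint{1}{0}$ to $\colint{n}{s}$, and
$$w_s \;:=\; \gamma\cdot\colref{1}{n}{s+1} \;=\; \Bigll\colint{1}{0}\;\colint{n}{s}\;\colint{2}{d-1}\;\ldots\;\colint{(n-1)}{d-1}\Bigrr,$$
the coatom produced by the third case of Lemma~\ref{lem:gddn_coatoms} with $a=1$ and colour parameter $s+1$. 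Both elements belong to $R_n^{(s)}$, and inspecting all three cases of Lemma~\ref{lem:gddn_coatoms} shows that $w_s$ is in fact the \emph{unique} coatom of $\gamma$ lying in $R_n^{(s)}$.

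Next I would prove the set equality $R_n^{(s)}=[\colref{1}{n}{s},w_s]$ via two inclusions. The direction $[\colref{1}{n}{s},w_s]\subseteq R_n^{(s)}$ is the easier one: by Lemma~\ref{lem:gddn_single_short_cycles} the interval $[\varepsilon,w_s]$ is isomorphic to $\pnc{G(1,1,n)}{1}$, and the elements above $\colref{1}{n}{s}$ correspond under this isomorphism to noncrossing partitions of the cyclic arrangement of $w_s$ in which $\colint{1}{0}$ and $\colint{n}{s}$ sit together as consecutive members of a block, forcing the corresponding permutation to send $\colint{1}{0}$ to $\colint{n}{s}$. For the converse $R_n^{(s)}\subseteq[\colref{1}{n}{s},w_s]$, I would first argue that $\colref{1}{n}{s}\leq_{T} u$ for every $u\in R_n^{(s)}$: since $\colint{1}{0}$ and $\colint{n}{s}$ are adjacent in the cycle of $u$ through $\colint{1}{0}$, the standard factorisation of a cycle allows one to exhibit a reduced $T$-factorisation of $u$ beginning with $\colref{1}{n}{s}$, whence $\ell_T(\colref{1}{n}{s}^{-1}u)=\ell_T(u)-1$. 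For the matching upper bound $u\leq_{T}w_s$, I would induct on the corank $n-\ell_T(u)$, using at each stage that $u$ lies below some coatom of $\gamma$ whose cycle structure still records $\colint{1}{0}\mapsto\colint{n}{s}$, together with the uniqueness of $w_s$ observed above.

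Having established $\RR_n^{(s)}=[\colref{1}{n}{s},w_s]$, I would apply Lemma~\ref{lem:bottom_intervals} with $u=\colref{1}{n}{s}$ (an involution) to obtain
$$\RR_n^{(s)} \;\cong\; \bigl[\varepsilon,\;\colref{1}{n}{s}\cdot w_s\bigr],$$
and a direct computation in cycle notation yields
$$\colref{1}{n}{s}\cdot w_s \;=\; \Bigll\colint{n}{s}\;\colint{2}{d-1}\;\colint{3}{d-1}\;\ldots\;\colint{(n-1)}{d-1}\Bigrr,$$
a single simultaneous cycle of length $n-1$. Lemma~\ref{lem:gddn_single_short_cycles} then gives $\RR_n^{(s)}\cong\pnc{G(1,1,n-1)}{1}$, with least and greatest elements $\colref{1}{n}{s}$ and $w_s$ of lengths $1$ and $n-1$, as claimed. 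The main obstacle is the upper-bound step $u\leq_{T}w_s$ in the converse inclusion: the hypothesis $u\leq_T\gamma$ does not by itself channel $u$ through a specific coatom, so this step requires carefully combining the constraint $u(\colint{1}{0})=\colint{n}{s}$ with the explicit coatom descriptions of Lemma~\ref{lem:gddn_coatoms} to rule out every coatom other than $w_s$ along the way up from $u$ to $\gamma$.
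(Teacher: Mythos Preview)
Your approach is the same as the paper's: identify the atom $\colref{1}{n}{s}$ and the unique coatom $w_s$ in $R_n^{(s)}$, recognise $R_n^{(s)}$ as the interval between them, then conclude via Lemmas~\ref{lem:bottom_intervals} and~\ref{lem:gddn_single_short_cycles}. In fact you are more careful than the paper at the interval step; the paper simply writes ``It follows that $\RR_n^{(s)}$ forms a closed interval'' after exhibiting the atom and the unique coatom.

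The obstacle you flag is genuine, and your proposed induction on corank does not quite close it: from $u\in R_n^{(s)}$ you cannot immediately conclude that \emph{some} coatom above $u$ still lies in $R_n^{(s)}$, which is exactly what the inductive step needs. A cleaner route sidesteps this. Once you have $\colref{1}{n}{s}\leq_T u$ for every $u\in R_n^{(s)}$, apply Lemma~\ref{lem:bottom_intervals} to the \emph{larger} interval $[\colref{1}{n}{s},\gamma]$ first: the translation $u\mapsto\colref{1}{n}{s}u$ carries it to $[\varepsilon,\colref{1}{n}{s}\gamma]$, and a direct computation gives
\[
\colref{1}{n}{s}\gamma \;=\; \Bigll\colint{1}{0}\;\colint{2}{0}\;\ldots\;\colint{(n-1)}{0}\;\colint{n}{s+1}\Bigrr,
\]
a single simultaneous cycle of length $n$. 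Under this translation, $u(\colint{1}{0})=\colint{n}{s}$ becomes $(\colref{1}{n}{s}u)(\colint{1}{0})=\colint{1}{0}$. By Lemma~\ref{lem:gddn_single_short_cycles} the translated interval is $\pnc{G(1,1,n)}{1}$, and the elements fixing the first position form precisely the sub\-interval below the $(n-1)$-cycle on the remaining positions, isomorphic to $\pnc{G(1,1,n-1)}{1}$. This yields both the interval claim and the isomorphism type at once, without any coatom-chasing.
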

\begin{proof}
	Fix $s\in\{0,1,\ldots,d-1\}$, and consider the set $R_{n}^{(s)}$.  Lemma~\ref{lem:gddn_atoms} implies that $\colref{1}{n}{s}\in R_{n}^{(s)}$, and Lemma~\ref{lem:gddn_coatoms} implies that the only coatom that maps $\colint{1}{0}$ to $\colint{n}{s}$ is 
	\begin{displaymath}
		\Bigll\colint{1}{0}\;\colint{n}{s}\;\colint{2}{d-1}\;\ldots\;\colint{(n-1)}{d-1}\Bigrr.
	\end{displaymath}
	It follows that $\RR_{n}^{(s)}$ forms a closed interval in $\pnc{G(d,d,n)}{1}(\gamma)$ whose least element has length $1$ and whose greatest element has length $n-1$.  Finally Lemmas~\ref{lem:gddn_single_short_cycles} and \ref{lem:bottom_intervals} imply that $\RR_{n}^{(s)}\cong\pnc{G(1,1,n-1)}{1}$ as desired.
\end{proof}

\begin{lemma}\label{lem:third_chunk}
	The poset $\RR_{i}^{(0)}$ is isomorphic to $\pnc{G(d,d,n-i+1)}{1}\times\pnc{G(1,1,i-2)}{1}$ whenever $3\leq i<n$.  Moreover, its least element has length $1$ and its greatest element has length $n-1$.  
\end{lemma}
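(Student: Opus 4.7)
The plan is to exhibit $\RR_{i}^{(0)}$ as the closed interval between its unique atom $a_{i}$ and unique coatom $b_{i}$ in $\pnc{G(d,d,n)}{1}(\gamma)$, and then recognize the relative element $a_{i}^{-1}b_{i}$ as a Coxeter element of the predicted parabolic subgroup $G(d,d,n-i+1)\times G(1,1,i-2)$.

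First I would read off the extremes of $\RR_{i}^{(0)}$ from the previous two lemmas. Scanning Lemma~\ref{lem:gddn_atoms}, the only reflection $t\in T_{\gamma}$ satisfying $t\bigl(\colint{1}{0}\bigr)=\colint{i}{0}$ in the range $3\le i<n$ is $a_{i}:=\colref{1}{i}{0}$. Scanning the three cases of Lemma~\ref{lem:gddn_coatoms}, only the coatom produced by case~(1) with $t=\colref{1}{i-1}{0}$ satisfies $(\gamma t)\bigl(\colint{1}{0}\bigr)=\colint{i}{0}$, which gives
\begin{displaymath}
	b_{i}=\Bigl[\colint{1}{0}\;\colint{i}{0}\;\colint{(i+1)}{0}\;\ldots\;\colint{(n-1)}{0}\Bigr]\Bigl[\colint{n}{0}\Bigr]_{d-1}\Bigll\colint{2}{0}\;\ldots\;\colint{(i-1)}{0}\Bigrr.
\end{displaymath}
Since $a_{i}$ is an involution that commutes with both $\bigl[\colint{n}{0}\bigr]_{d-1}$ and the simultaneous cycle on $\{2,\ldots,i-1\}$, the product $a_{i}^{-1}b_{i}=a_{i}b_{i}$ collapses its first two factors into a single balanced cycle of length $n-i$, leaving
\begin{displaymath}
	a_{i}^{-1}b_{i}=\Bigl[\colint{i}{0}\;\colint{(i+1)}{0}\;\ldots\;\colint{(n-1)}{0}\Bigr]\Bigl[\colint{n}{0}\Bigr]_{d-1}\Bigll\colint{2}{0}\;\ldots\;\colint{(i-1)}{0}\Bigrr.
\end{displaymath}
The first two factors form the analogue of \eqref{eq:gddn_coxeter_element} for $G(d,d,n-i+1)$ acting on $\{i,\ldots,n\}$, while the third factor is a full simultaneous $(i-2)$-cycle, which by Lemma~\ref{lem:gddn_single_short_cycles} serves as a Coxeter element of a parabolic subgroup isomorphic to $G(1,1,i-2)$ on $\{2,\ldots,i-1\}$. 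These two pieces have disjoint support and commute, so Lemma~\ref{lem:gddn_parabolic_subgroups} together with Proposition~\ref{prop:parabolic_coxeter_elements} identifies $a_{i}^{-1}b_{i}$ as a Coxeter element of a parabolic subgroup of $G(d,d,n)$ isomorphic to $G(d,d,n-i+1)\times G(1,1,i-2)$.

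With this in hand, Lemma~\ref{lem:bottom_intervals} provides $[a_{i},b_{i}]\cong[\varepsilon,a_{i}^{-1}b_{i}]$, and the interval below a parabolic Coxeter element factors along its irreducible components as a direct product of noncrossing partition lattices, yielding the claimed isomorphism. The length statements are then immediate from $\ell_{T}(a_{i})=1$ and $\ell_{T}(b_{i})=n-1$. The main obstacle is verifying the set-theoretic identity $R_{i}^{(0)}=[a_{i},b_{i}]$. The inclusion $[a_{i},b_{i}]\subseteq R_{i}^{(0)}$ is transparent, because every $x\in[a_{i},b_{i}]$ factors as $x=a_{i}v$ with $v$ lying in the parabolic subgroup $G(d,d,n-i+1)\times G(1,1,i-2)$, which fixes $\colint{1}{0}$ pointwise, so $x\bigl(\colint{1}{0}\bigr)=a_{i}\bigl(\colint{1}{0}\bigr)=\colint{i}{0}$. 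For the reverse inclusion I would argue via the fixed-space characterization of absolute order on $G(d,d,n)$: any $u\in R_{i}^{(0)}$ satisfies $\text{Fix}(u)\subseteq\{v\mid v_{1}=v_{i}\}=\text{Fix}(a_{i})$, which forces $a_{i}\leq_{T}u$, and a dual fixed-space comparison (using that $\text{Fix}(b_{i})$ is a line contained in $\text{Fix}(u)$ for every $u\in R_{i}^{(0)}\cap[\varepsilon,\gamma]$) yields $u\leq_{T}b_{i}$.
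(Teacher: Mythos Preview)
Your argument follows essentially the same route as the paper's: identify the unique atom $a_{i}=\colref{1}{i}{0}$ and the unique coatom $b_{i}$ of $\RR_{i}^{(0)}$ via Lemmas~\ref{lem:gddn_atoms} and~\ref{lem:gddn_coatoms}, conclude that $\RR_{i}^{(0)}$ is the closed interval $[a_{i},b_{i}]$, and then read off the isomorphism type after translating by $a_{i}$ with Lemma~\ref{lem:bottom_intervals}. Your explicit computation of $a_{i}^{-1}b_{i}$ as a Coxeter element of $G(d,d,n-i+1)\times G(1,1,i-2)$ is in fact a cleaner bookkeeping than the paper's own phrasing, which instead identifies $b_{i}$ as a Coxeter element of $G(d,d,n-i+2)\times G(1,1,i-2)$ and then remarks parenthetically that the rank of the first factor drops by one because the bottom of the interval is $a_{i}$ rather than $\varepsilon$.

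One point deserves a comment. Your forward inclusion $[a_{i},b_{i}]\subseteq R_{i}^{(0)}$ and your argument for $a_{i}\leq_{T}u$ via $\text{Fix}(u)\subseteq\{v_{1}=v_{i}\}$ are both fine. But the ``dual fixed-space comparison'' establishing $u\leq_{T}b_{i}$ is only asserted: you claim $\text{Fix}(b_{i})=\text{span}(e_{2}+\cdots+e_{i-1})\subseteq\text{Fix}(u)$ for every $u\in R_{i}^{(0)}$, yet verifying this directly already requires knowing that $a_{i}^{-1}u$ lies in the parabolic on $\{2,\ldots,i-1\}\cup\{i,\ldots,n\}$, which is essentially the conclusion you want. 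The paper is equally terse here (``it follows once more that $\RR_{i}^{(0)}$ forms a closed interval''), so you are not held to a higher standard, but a cleaner way to close this step is to note that once $a_{i}\leq_{T}u$ is established, the translate $a_{i}^{-1}u$ lies in $[\varepsilon,a_{i}^{-1}\gamma]\cong\pnc{G(1,1,i-1)}{1}\times\pnc{G(d,d,n-i+1)}{1}$ and fixes $\colint{1}{0}$, which forces its $G(1,1,i-1)$-component below $(2\;3\;\cdots\;i-1)$ and hence $a_{i}^{-1}u\leq_{T}a_{i}^{-1}b_{i}$.
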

\begin{proof}
	 Lemma~\ref{lem:gddn_atoms} implies that $\colref{1}{i}{0}\in R_{i}^{(0)}$, and Lemma~\ref{lem:gddn_coatoms} implies that the only coatom that maps $\colint{1}{0}$ to $\colint{i}{0}$ is
	\begin{equation}\label{eq:third_chunk_coatom}
		\Bigl[\colint{1}{0}\;\colint{i}{0}\;\ldots\;\colint{(n-1)}{0}\Bigr]\Bigl[\colint{n}{0}\Bigr]_{d-1}\Bigll\colint{2}{0}\;\ldots\;\colint{(i-1)}{0}\Bigrr.
	\end{equation}
	It follows once more that $\RR_{i}^{(0)}$ forms a closed interval in $\pnc{G(d,d,n)}{1}(\gamma)$ whose least element has length $1$ and whose greatest element has length $n-1$.  It is also immediate to check that the greatest element of $\RR_{i}^{(0)}$, namely the one in \eqref{eq:third_chunk_coatom}, is a Coxeter element in a parabolic subgroup of $G(d,d,n)$ which is isomorphic to $G(d,d,n-i+2)\times G(1,1,i-2)$.  Hence Lemma~\ref{lem:bottom_intervals} implies that $\RR_{i}^{(0)}$ is isomorphic to $\pnc{G(d,d,n-i+1)}{1}\times\pnc{G(1,1,i-2)}{1}$ as desired.  (We have to reduce the rank of the first factor by one, since the least element in $\RR_{i}^{(0)}$ is not the identity, but rather $\colref{1}{i}{0}$ instead.  Moreover, observe that the rank of $\pnc{G(1,1,i-2)}{1}$ is actually $i-3$.)
\end{proof}

\begin{lemma}\label{lem:fourth_chunk}
	The poset $\RR_{i}^{(d-1)}$ is isomorphic to $\pnc{G(1,1,n-i)}{1}\times\pnc{G(d,d,i-1)}{1}$ whenever $3\leq i<n$.  Moreover, its least element has length $1$ and its greatest element has length $n-1$.  
\end{lemma}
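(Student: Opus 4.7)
The plan is to mirror the proof of Lemma~\ref{lem:third_chunk} step by step, with the only substantive change being that the relevant coatom now comes from the shift-$(d-1)$ case of Lemma~\ref{lem:gddn_coatoms} rather than the shift-$0$ case.

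I would first invoke Lemma~\ref{lem:gddn_atoms} to see that, since $3\leq i<n$, the only reflection in $T_{\gamma}$ sending $\colint{1}{0}$ to $\colint{i}{d-1}$ is $\colref{1}{i}{d-1}$, which is therefore the unique atom of $\RR_{i}^{(d-1)}$.  Next I would run through the three coatom formulas in Lemma~\ref{lem:gddn_coatoms} and inspect where each of them sends $\colint{1}{0}$: in the shift-$0$ case the image is of the form $\colint{2}{0}$, $\colint{(b+1)}{0}$, or $\colint{1}{1}$, and in the $\colref{a}{n}{s}$ case it is of the form $\colint{2}{0}$ or $\colint{n}{s-1}$; none of these can equal $\colint{i}{d-1}$ when $3\leq i<n$ and $d\geq 2$.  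Only the shift-$(d-1)$ case survives, and the constraint $(\gamma t)(\colint{1}{0})=\colint{i}{d-1}$ forces $a=1$ and $b=i-1$, producing the unique coatom
$$w^{*} \;=\; \Bigll\colint{1}{0}\;\colint{i}{d-1}\;\colint{(i+1)}{d-1}\;\ldots\;\colint{(n-1)}{d-1}\Bigrr\Bigl[\colint{2}{0}\;\ldots\;\colint{(i-1)}{0}\Bigr]\Bigl[\colint{n}{0}\Bigr]_{d-1}.$$
As in the proof of Lemma~\ref{lem:third_chunk}, this shows that $\RR_{i}^{(d-1)}$ coincides with the closed interval $[\colref{1}{i}{d-1},w^{*}]$ of $\pnc{G(d,d,n)}{1}(\gamma)$, so its least element has length $1$ and its greatest element has length $n-1$.

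Finally, I would apply Lemma~\ref{lem:bottom_intervals} with $u=\colref{1}{i}{d-1}$ to transfer the analysis to the interval $\bigl[\varepsilon,\colref{1}{i}{d-1}\,w^{*}\bigr]$.  A short cycle computation — left-multiplying the simultaneous-cycle factor of $w^{*}$ by $\colref{1}{i}{d-1}$ deletes $\colint{1}{0}$ from each of its $d$ colored component cycles, and invariance of the simultaneous-cycle notation under a common color shift lets me rewrite the surviving cycle in color $0$ — yields
$$\colref{1}{i}{d-1}\,w^{*} \;=\; \Bigll\colint{i}{0}\;\colint{(i+1)}{0}\;\ldots\;\colint{(n-1)}{0}\Bigrr\Bigl[\colint{2}{0}\;\ldots\;\colint{(i-1)}{0}\Bigr]\Bigl[\colint{n}{0}\Bigr]_{d-1}.$$
The first factor is a simultaneous cycle of length $n-i$, whose interval below is $\pnc{G(1,1,n-i)}{1}$ by Lemma~\ref{lem:gddn_single_short_cycles}; the two remaining balanced cycles together form a Coxeter element of $G(d,d,i-1)$ on $\{2,\ldots,i-1,n\}$ in the exact normal form of~\eqref{eq:gddn_coxeter_element}.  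Invoking Lemma~\ref{lem:gddn_parabolic_subgroups} together with the standard fact that intervals below a parabolic Coxeter element factor as a direct product of the noncrossing partition lattices of the irreducible parabolic factors then gives $\RR_{i}^{(d-1)}\cong\pnc{G(1,1,n-i)}{1}\times\pnc{G(d,d,i-1)}{1}$, as claimed.  The only genuinely new piece of work is this cycle product, and the main bookkeeping hurdle will be tracking the color shifts modulo $d$; applying the identity $(a\;b)(a\;b\;c_{1}\;\ldots\;c_{m})=(b\;c_{1}\;\ldots\;c_{m})$ to each of the $d$ colored components makes the calculation mechanical.
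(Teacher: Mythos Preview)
Your proposal is correct and follows the same approach as the paper: both identify the unique atom and coatom of $\RR_{i}^{(d-1)}$ via Lemmas~\ref{lem:gddn_atoms} and~\ref{lem:gddn_coatoms}, recognize the resulting interval as sitting under a parabolic Coxeter element of type $G(1,1,n-i+1)\times G(d,d,i-1)$, and then shift by the atom using Lemma~\ref{lem:bottom_intervals}. The paper's proof is terse (it literally says ``essentially the same proof as that of Lemma~\ref{lem:third_chunk}'' and just records the coatom), whereas you spell out the coatom-elimination and the cycle product $\colref{1}{i}{d-1}w^{*}$ explicitly; both are fine, and your added detail is accurate.
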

\begin{proof}
	This is essentially the same proof as that of Lemma~\ref{lem:third_chunk}.  Observe that the greatest element of $\RR_{i}^{(d-1)}$ is
	\begin{displaymath}
		\Bigll\colint{1}{0}\;\colint{i}{d-1}\;\ldots\;\colint{(n-1)}{d-1}\Bigrr\Bigl[\colint{2}{0}\;\ldots\;\colint{(i-1)}{0}\Bigr]\Bigl[\colint{n}{0}\Bigr]_{d-1},
	\end{displaymath}
	which is a Coxeter element in a parabolic subgroup of $G(d,d,n)$ isomorphic to $G(1,1,n-i+1)\times G(d,d,i-1)$.
\end{proof}

\begin{corollary}
	For $3\leq i<n$ and $d\geq 2$ we have $\RR_{i}^{(0)}\cong\RR_{n-i+2}^{(d-1)}$.
\end{corollary}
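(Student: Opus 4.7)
The plan is to prove this corollary by direct substitution into the isomorphism formulas provided by Lemma~\ref{lem:third_chunk} and Lemma~\ref{lem:fourth_chunk}. First, I would apply Lemma~\ref{lem:third_chunk} to the left-hand side: since $3 \leq i < n$ by hypothesis, we obtain
\begin{displaymath}
	\RR_{i}^{(0)} \cong \pnc{G(d,d,n-i+1)}{1}\times\pnc{G(1,1,i-2)}{1}.
\end{displaymath}

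Next, I would check that Lemma~\ref{lem:fourth_chunk} applies to $\RR_{n-i+2}^{(d-1)}$: its hypothesis requires $3 \leq n-i+2 < n$, which is equivalent to $3 \leq i < n$, exactly our assumption. Substituting $n-i+2$ for $i$ in the formula from Lemma~\ref{lem:fourth_chunk} yields
\begin{displaymath}
	\RR_{n-i+2}^{(d-1)} \cong \pnc{G(1,1,n-(n-i+2))}{1}\times\pnc{G(d,d,(n-i+2)-1)}{1} = \pnc{G(1,1,i-2)}{1}\times\pnc{G(d,d,n-i+1)}{1}.
\end{displaymath}

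Since the direct product of posets is commutative up to isomorphism, the two right-hand sides coincide, and the claim follows. There is no real obstacle here; the whole corollary is an index bookkeeping consequence of the two preceding lemmas, and the only thing worth pointing out is the verification that the parameter $n-i+2$ falls within the admissible range for Lemma~\ref{lem:fourth_chunk}.
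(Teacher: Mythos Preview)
Your proposal is correct and follows exactly the same approach as the paper, which simply states that the result follows immediately from Lemmas~\ref{lem:third_chunk} and \ref{lem:fourth_chunk}. You have just made the index substitution and range check explicit, which the paper leaves to the reader.
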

\begin{proof}
	This follows immediately from Lemmas~\ref{lem:third_chunk} and \ref{lem:fourth_chunk}. 
\end{proof}

\begin{lemma}\label{lem:fifth_chunk}
	The posets $\RR_{1}^{(1)}$ and $\RR_{2}^{(d-1)}$ are both isomorphic to $\pnc{G(1,1,n-2)}{1}$.  In the first case the least element has length $2$ and the greatest element has length $n-1$, while in the second case the least element has length $1$ and the greatest element has length $n-2$.  
\end{lemma}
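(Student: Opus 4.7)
My plan is to treat $\RR_1^{(1)}$ and $\RR_2^{(d-1)}$ separately, following the template of Lemmas~\ref{lem:third_chunk} and \ref{lem:fourth_chunk}: first identify the minimum and the maximum of each $R_i^{(s)}$ explicitly, conclude that $\RR_i^{(s)}$ is the closed interval spanned by those endpoints, and then apply Lemmas~\ref{lem:bottom_intervals} and \ref{lem:gddn_single_short_cycles} to recognize the interval as $\pnc{G(1,1,n-2)}{1}$.

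For $\RR_1^{(1)}$: I would first invoke Lemma~\ref{lem:gddn_atoms} to rule out any reflection mapping $\colint{1}{0}$ to $\colint{1}{1}$, since each atom of the form $\colref{a}{b}{s}$ moving $\colint{1}{0}$ necessarily has $a=1$ and sends $\colint{1}{0}$ to $\colint{b}{s}$ with $b>1$. Hence $R_1^{(1)}$ contains no atom, and I claim its minimum is $u_1^{(1)}:=\Bigl[\colint{1}{0}\Bigr]\Bigl[\colint{n}{0}\Bigr]_{d-1}$: this element lies in $G(d,d,n)$ because its two balanced-cycle shifts sum to $0\bmod d$, it sends $\colint{1}{0}$ to $\colint{1}{1}$, has length $2$, and is a Coxeter element of the parabolic $G(d,d,2)$ acting on $\{1,n\}$. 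Scanning the three Types in Lemma~\ref{lem:gddn_coatoms}, the only coatom sending $\colint{1}{0}$ to $\colint{1}{1}$ is the one from Type~1 with $a=1$ and $b=n-1$, namely $v_1^{(1)}:=u_1^{(1)}\cdot\Bigll\colint{2}{0}\;\colint{3}{0}\;\ldots\;\colint{(n-1)}{0}\Bigrr$, which is a Coxeter element of $G(d,d,2)\times G(1,1,n-2)$. As in Lemmas~\ref{lem:third_chunk} and \ref{lem:fourth_chunk}, the uniqueness of the minimum and of the coatom forces $\RR_1^{(1)}=[u_1^{(1)},v_1^{(1)}]$. Lemma~\ref{lem:bottom_intervals} then gives $\RR_1^{(1)}\cong[\varepsilon,(u_1^{(1)})^{-1}v_1^{(1)}]$, and since $u_1^{(1)}$ commutes with the disjointly supported simultaneous cycle we obtain $(u_1^{(1)})^{-1}v_1^{(1)}=\Bigll\colint{2}{0}\;\colint{3}{0}\;\ldots\;\colint{(n-1)}{0}\Bigrr$, a simultaneous cycle of length $n-2$, so Lemma~\ref{lem:gddn_single_short_cycles} identifies $\RR_1^{(1)}$ with $\pnc{G(1,1,n-2)}{1}$; the lengths $2$ and $n-1$ are then immediate.

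For $\RR_2^{(d-1)}$: Lemma~\ref{lem:gddn_atoms} directly gives the unique reflection $u_2^{(d-1)}:=\colref{1}{2}{d-1}$ sending $\colint{1}{0}$ to $\colint{2}{d-1}$, the candidate minimum. A scan of Lemma~\ref{lem:gddn_coatoms} shows that no coatom has this property, so the maximum sits strictly below the coatom level, and I would take the simultaneous cycle $v_2^{(d-1)}:=\Bigll\colint{1}{0}\;\colint{2}{d-1}\;\colint{3}{d-1}\;\ldots\;\colint{(n-1)}{d-1}\Bigrr$ of length $n-2$. A short monomial-matrix check shows $v_2^{(d-1)}=\gamma\cdot\mathrm{diag}(\zeta_d^{d-1},1,\ldots,1,\zeta_d)$, whence $(v_2^{(d-1)})^{-1}\gamma=u_1^{(1)}$ has length $2$, certifying $v_2^{(d-1)}\leq_T\gamma$. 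Concluding once more that $\RR_2^{(d-1)}=[u_2^{(d-1)},v_2^{(d-1)}]$ and computing $(u_2^{(d-1)})^{-1}v_2^{(d-1)}=\Bigll\colint{2}{0}\;\colint{3}{0}\;\ldots\;\colint{(n-1)}{0}\Bigrr$ (a simultaneous cycle of length $n-2$), Lemmas~\ref{lem:bottom_intervals} and \ref{lem:gddn_single_short_cycles} again deliver $\RR_2^{(d-1)}\cong\pnc{G(1,1,n-2)}{1}$.

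The hard part, which also underlies Lemmas~\ref{lem:third_chunk} and \ref{lem:fourth_chunk}, is the closure step: showing that every element of $R_i^{(s)}$ actually lies between the two identified endpoints, not merely that the extremes do. I expect this to follow from the observation that the condition $w(\colint{1}{0})=\colint{i}{s}$ pins down the cycle of $w$ containing $\colint{1}{0}$ and therefore forces a specific cycle factor in $w$ (the balanced cycle $\Bigl[\colint{1}{0}\Bigr]$ in the first case, the reflection $\colref{1}{2}{d-1}$ in the second), while the constraint $w\leq_T\gamma$ confines the remaining cycles of $w$ to refine $\gamma$'s balanced-cycle skeleton $\Bigl[\colint{1}{0}\;\colint{2}{0}\;\ldots\;\colint{(n-1)}{0}\Bigr]\Bigl[\colint{n}{0}\Bigr]_{d-1}$ in a way that keeps $w$ below the proposed maximum.
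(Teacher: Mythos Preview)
Your outline is correct and you have identified the right endpoints in both cases; the final application of Lemmas~\ref{lem:bottom_intervals} and~\ref{lem:gddn_single_short_cycles} is also exactly how the paper concludes. The difference lies in the step you yourself flag as ``the hard part'': proving that $\RR_{1}^{(1)}$ and $\RR_{2}^{(d-1)}$ actually \emph{coincide} with the closed intervals between your endpoints.

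For $\RR_{1}^{(1)}$, your sketch ``$w\bigl(\colint{1}{0}\bigr)=\colint{1}{1}$ forces the balanced cycle $\bigl[\colint{1}{0}\bigr]$'' is correct, but it does not by itself give $u_{1}^{(1)}\leq_{T}w$: you must also show that the companion balanced cycle is supported on $\{n\}$ with shift $d-1$, rather than on some other index. The paper isolates exactly this as a separate lemma and proves it by a fixed-space computation: using $\ell_{T}(w)=n-\dim\text{Fix}(w)$, one checks directly that $\bigl[\colint{1}{0}\bigr]_{s}\bigl[\colint{a}{0}\bigr]_{d-s}\leq_{T}\gamma$ holds if and only if $a=n$ and $s=1$. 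This is the content your cycle-refinement heuristic is reaching for, but it needs the linear-algebra verification to be rigorous.

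For $\RR_{2}^{(d-1)}$, the missing direction is the upper bound: why is every $w\in R_{2}^{(d-1)}$ bounded above by $v_{2}^{(d-1)}$? Here the paper again resorts to an explicit fixed-space computation, this time to classify \emph{all} coatoms of $\pnc{G(d,d,n)}{1}(\gamma)$ lying above $\colref{1}{2}{d-1}$. Running through the three coatom shapes of Lemma~\ref{lem:gddn_coatoms} and computing $\dim\text{Fix}\bigl(\colref{1}{2}{d-1}w\bigr)$ in each case, one finds that only coatoms of the first shape with $a>1$ and coatoms of the third shape with $a=1$ survive; none of the former can cover an element sending $\colint{1}{0}$ to $\colint{2}{d-1}$, while the latter all cover the single element $v_{2}^{(d-1)}$. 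Your ``refine the skeleton'' idea does not distinguish the two sub-cases of the second coatom shape (where the computation shows $\colref{1}{2}{d-1}\not\leq_{T}w$), so as written it would not close the argument.

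In short: same endpoints, same final isomorphism, but the paper replaces your informal cycle-structure reasoning by two dedicated fixed-space lemmas, and those computations are where the actual work happens.
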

\begin{proof}
	See Appendix~\ref{app:fixed_space}.
\end{proof}

We observe that the induced subposet $\RR_{1}^{(1)}\uplus\RR_{2}^{(d-1)}$ of $\pnc{G(d,d,n)}{1}(\gamma)$ is disconnected whenever $n>2$, see for instance Figure~\ref{fig:g553_nc}.  (In this example $\RR_{1}^{(1)}$ consists only of the coatom $\Bigl[\colint{1}{0}\Bigr]\Bigl[\colint{3}{0}\Bigr]_{4}$, and $\RR_{2}^{(4)}$ consists only of the atom $\colref{1}{2}{4}$, and these two elements are incomparable.)  Furthermore, since both $R_{1}^{(1)}$ and $R_{2}^{(d-1)}$ do not sit in $\pnc{G(d,d,n)}{1}(\gamma)$ symmetrically, the decomposition in \eqref{eq:gddn_first_decomposition} is not a suitable starting point to obtain a symmetric decomposition of $\pnc{G(d,d,n)}{1}(\gamma)$.  We overcome this issue in the next section.  

\begin{amssidewaysfigure}
	\centering
	\begin{tikzpicture}\small
		\def\x{1.75};
		\def\y{5};
		\def\s{.67};
		\draw(6.5*\x,.5*\y) node[scale=\s](n1){$\Bigll\colint{1}{0}\Bigrr$};
		\draw(1*\x,1*\y) node[scale=\s](n2){$\Bigll\colint{2}{0}\;\colint{3}{0}\Bigrr$};
		\draw(2*\x,1*\y) node[scale=\s](n3){$\Bigll\colint{1}{0}\;\colint{3}{0}\Bigrr$};
		\draw(3*\x,1*\y) node[scale=\s](n4){$\Bigll\colint{1}{0}\;\colint{2}{4}\Bigrr$};
		\draw(4*\x,1*\y) node[scale=\s](n5){$\Bigll\colint{1}{0}\;\colint{3}{4}\Bigrr$};
		\draw(5*\x,1*\y) node[scale=\s](n6){$\Bigll\colint{2}{0}\;\colint{3}{1}\Bigrr$};
		\draw(6*\x,1*\y) node[scale=\s](n7){$\Bigll\colint{2}{0}\;\colint{3}{4}\Bigrr$};
		\draw(7*\x,1*\y) node[scale=\s](n8){$\Bigll\colint{1}{0}\;\colint{3}{1}\Bigrr$};
		\draw(8*\x,1*\y) node[scale=\s](n9){$\Bigll\colint{1}{0}\;\colint{3}{3}\Bigrr$};
		\draw(9*\x,1*\y) node[scale=\s](n10){$\Bigll\colint{2}{0}\;\colint{3}{2}\Bigrr$};
		\draw(10*\x,1*\y) node[scale=\s](n11){$\Bigll\colint{1}{0}\;\colint{2}{0}\Bigrr$};
		\draw(11*\x,1*\y) node[scale=\s](n12){$\Bigll\colint{2}{0}\;\colint{3}{3}\Bigrr$};
		\draw(12*\x,1*\y) node[scale=\s](n13){$\Bigll\colint{1}{0}\;\colint{3}{2}\Bigrr$};
		\draw(1*\x,2*\y) node[scale=\s](n14){$\Bigll\colint{1}{0}\;\colint{2}{0}\;\colint{3}{0}\Bigrr$};
		\draw(2*\x,2*\y) node[scale=\s](n15){$\Bigll\colint{1}{0}\;\colint{3}{4}\;\colint{2}{4}\Bigrr$};
		\draw(3*\x,2*\y) node[scale=\s](n16){$\Bigl[\colint{1}{0}\Bigr]\Bigl[\colint{3}{0}\Bigr]_{4}$};
		\draw(4*\x,2*\y) node[scale=\s](n17){$\Bigll\colint{1}{0}\;\colint{3}{0}\;\colint{2}{4}\Bigrr$};
		\draw(5*\x,2*\y) node[scale=\s](n18){$\Bigll\colint{1}{0}\;\colint{2}{0}\;\colint{3}{4}\Bigrr$};
		\draw(6*\x,2*\y) node[scale=\s](n19){$\Bigll\colint{1}{0}\;\colint{2}{0}\;\colint{3}{1}\Bigrr$};
		\draw(7*\x,2*\y) node[scale=\s](n20){$\Bigll\colint{1}{0}\;\colint{3}{3}\;\colint{2}{4}\Bigrr$};
		\draw(8*\x,2*\y) node[scale=\s](n21){$\Bigll\colint{1}{0}\;\colint{3}{1}\;\colint{2}{4}\Bigrr$};
		\draw(9*\x,2*\y) node[scale=\s](n22){$\Bigll\colint{1}{0}\;\colint{2}{0}\;\colint{3}{3}\Bigrr$};
		\draw(10*\x,2*\y) node[scale=\s](n23){$\Bigl[\colint{2}{0}\Bigr]\Bigl[\colint{3}{0}\Bigr]_{4}$};
		\draw(11*\x,2*\y) node[scale=\s](n24){$\Bigll\colint{1}{0}\;\colint{2}{0}\;\colint{3}{2}\Bigrr$};
		\draw(12*\x,2*\y) node[scale=\s](n25){$\Bigll\colint{1}{0}\;\colint{3}{2}\;\colint{2}{4}\Bigrr$};
		\draw(6.5*\x,2.5*\y) node[scale=\s](n26){$\Bigl[\colint{1}{0}\;\colint{2}{0}\Bigr]\Bigl[\colint{3}{0}\bigr]_{4}$};	
		\draw(n1) -- (n2);
		\draw(n1) -- (n3);
		\draw(n1) -- (n4);
		\draw(n1) -- (n5);
		\draw(n1) -- (n6);
		\draw(n1) -- (n7);
		\draw(n1) -- (n8);
		\draw(n1) -- (n9);
		\draw(n1) -- (n10);
		\draw(n1) -- (n11);
		\draw(n1) -- (n12);
		\draw(n1) -- (n13);
		\draw(n2) -- (n14);
		\draw(n2) -- (n15);
		\draw(n2) -- (n23);
		\draw(n3) -- (n14);
		\draw(n3) -- (n16);
		\draw(n3) -- (n17);
		\draw(n4) -- (n15);
		\draw(n4) -- (n17);
		\draw(n4) -- (n20);
		\draw(n4) -- (n21);
		\draw(n4) -- (n25);
		\draw(n5) -- (n15);
		\draw(n5) -- (n16);
		\draw(n5) -- (n18);
		\draw(n6) -- (n17);
		\draw(n6) -- (n19);
		\draw(n6) -- (n23);
		\draw(n7) -- (n18);
		\draw(n7) -- (n20);
		\draw(n7) -- (n23);
		\draw(n8) -- (n16);
		\draw(n8) -- (n19);
		\draw(n8) -- (n21);
		\draw(n9) -- (n16);
		\draw(n9) -- (n20);
		\draw(n9) -- (n22);
		\draw(n10) -- (n21);
		\draw(n10) -- (n23);
		\draw(n10) -- (n24);
		\draw(n11) -- (n14);
		\draw(n11) -- (n18);
		\draw(n11) -- (n19);
		\draw(n11) -- (n22);
		\draw(n11) -- (n24);
		\draw(n12) -- (n22);
		\draw(n12) -- (n23);
		\draw(n12) -- (n25);
		\draw(n13) -- (n16);
		\draw(n13) -- (n24);
		\draw(n13) -- (n25);
		\draw(n14) -- (n26);
		\draw(n15) -- (n26);
		\draw(n16) -- (n26);
		\draw(n17) -- (n26);
		\draw(n18) -- (n26);
		\draw(n19) -- (n26);
		\draw(n20) -- (n26);
		\draw(n21) -- (n26);
		\draw(n22) -- (n26);
		\draw(n23) -- (n26);
		\draw(n24) -- (n26);
		\draw(n25) -- (n26);
		\begin{pgfonlayer}{background}
			\fill[green!50!black,opacity=.5](n1) ellipse(21pt and 10pt);
			\fill[green!50!black,opacity=.5](n2) ellipse(21pt and 10pt);
			\fill[green!50!black,opacity=.5](n6) ellipse(21pt and 10pt);
			\fill[green!50!black,opacity=.5](n7) ellipse(21pt and 10pt);
			\fill[green!50!black,opacity=.5](n10) ellipse(21pt and 10pt);
			\fill[green!50!black,opacity=.5](n11) ellipse(21pt and 10pt);
			\fill[green!50!black,opacity=.5](n12) ellipse(21pt and 10pt);
			\fill[green!50!black,opacity=.5](n14) ellipse(21pt and 10pt);
			\fill[green!50!black,opacity=.5](n18) ellipse(21pt and 10pt);
			\fill[green!50!black,opacity=.5](n19) ellipse(21pt and 10pt);
			\fill[green!50!black,opacity=.5](n22) ellipse(21pt and 10pt);
			\fill[green!50!black,opacity=.5](n23) ellipse(21pt and 10pt);
			\fill[green!50!black,opacity=.5](n24) ellipse(21pt and 10pt);
			\fill[green!50!black,opacity=.5](n26) ellipse(21pt and 10pt);
			\draw[rounded corners,green!50!black,opacity=.4,cap=round,line width=5pt](n1) -- (n2) -- (n23);
			\draw[rounded corners,green!50!black,opacity=.4,cap=round,line width=5pt](n1) -- (n6) -- (n23);
			\draw[rounded corners,green!50!black,opacity=.4,cap=round,line width=5pt](n1) -- (n7) -- (n23);
			\draw[rounded corners,green!50!black,opacity=.4,cap=round,line width=5pt](n1) -- (n10) -- (n23);
			\draw[rounded corners,green!50!black,opacity=.4,cap=round,line width=5pt](n1) -- (n12) -- (n23);
			\draw[rounded corners,green!50!black,opacity=.4,cap=round,line width=5pt](n11) -- (n14) -- (n26);
			\draw[rounded corners,green!50!black,opacity=.4,cap=round,line width=5pt](n11) -- (n18) -- (n26);
			\draw[rounded corners,green!50!black,opacity=.4,cap=round,line width=5pt](n11) -- (n19) -- (n26);
			\draw[rounded corners,green!50!black,opacity=.4,cap=round,line width=5pt](n11) -- (n22) -- (n26);
			\draw[rounded corners,green!50!black,opacity=.4,cap=round,line width=5pt](n11) -- (n24) -- (n26);
			\draw[rounded corners,green!50!black,opacity=.4,cap=round,line width=5pt](n1) -- (n11);
			\draw[rounded corners,green!50!black,opacity=.4,cap=round,line width=5pt](n2) -- (n14);
			\draw[rounded corners,green!50!black,opacity=.4,cap=round,line width=5pt](n6) -- (n19);
			\draw[rounded corners,green!50!black,opacity=.4,cap=round,line width=5pt](n7) -- (n18);
			\draw[rounded corners,green!50!black,opacity=.4,cap=round,line width=5pt](n10) -- (n24);
			\draw[rounded corners,green!50!black,opacity=.4,cap=round,line width=5pt](n12) -- (n22);
			\draw[rounded corners,green!50!black,opacity=.4,cap=round,line width=5pt](n23) -- (n26);
			\fill[blue!50!black,opacity=.5](n3) ellipse(21pt and 10pt);
			\fill[blue!50!black,opacity=.5](n17) ellipse(21pt and 10pt);
			\draw[rounded corners,blue!50!black,opacity=.4,cap=round,line width=5pt](n3) -- (n17);
			\fill[purple!50!black,opacity=.5](n5) ellipse(21pt and 10pt);
			\fill[purple!50!black,opacity=.5](n15) ellipse(21pt and 10pt);
			\draw[rounded corners,purple!50!black,opacity=.4,cap=round,line width=5pt](n5) -- (n15);
			\fill[yellow!50!black,opacity=.5](n8) ellipse(21pt and 10pt);
			\fill[yellow!50!black,opacity=.5](n21) ellipse(21pt and 10pt);
			\draw[rounded corners,yellow!50!black,opacity=.4,cap=round,line width=5pt](n8) -- (n21);
			\fill[red!50!black,opacity=.5](n9) ellipse(21pt and 10pt);
			\fill[red!50!black,opacity=.5](n20) ellipse(21pt and 10pt);
			\draw[rounded corners,red!50!black,opacity=.4,cap=round,line width=5pt](n9) -- (n20);
			\fill[orange!50!black,opacity=.5](n13) ellipse(21pt and 10pt);
			\fill[orange!50!black,opacity=.5](n25) ellipse(21pt and 10pt);
			\draw[rounded corners,orange!50!black,opacity=.4,cap=round,line width=5pt](n13) -- (n25);
			\fill[white!50!black,opacity=.5](n4) ellipse(21pt and 10pt);
			\fill[brown!50!black,opacity=.5](n16) ellipse(21pt and 10pt);			
		\end{pgfonlayer}
	\end{tikzpicture}
	\caption{The lattice $\pnc{G(5,5,3)}{1}\Bigl(\Bigl[\colint{1}{0}\;\colint{2}{0}\Bigr]\Bigl[\colint{3}{0}\Bigr]_{4}\Bigr)$.  The decomposition according to \eqref{eq:gddn_first_decomposition} is highlighted.}
	\label{fig:g553_nc}
\end{amssidewaysfigure}

\begin{remark}
	We remark that the structure of the decomposition of $\pnc{G(d,d,n)}{1}(\gamma)$ into the pieces $R_{i}^{(s)}$ depends on the choice of Coxeter element.  If we for instance consider the Coxeter element 
	\begin{displaymath}
		\gamma' = \Bigl[\colint{1}{0}\Bigr]_{d-1}\Bigl[\colint{n}{0}\;\colint{(n-1)}{0}\;\ldots\;\colint{2}{0}\Bigr]
	\end{displaymath}
	used by D.~Bessis and R.~Corran in \cite{bessis06non}, then we obtain a decomposition of $\pnc{G(d,d,n)}{1}(\gamma')$ with the following properties:
	\begin{itemize}
		\item the parts $R_{1}^{(s)}$ are empty for $2\leq s<d-1$;
		\item the subposet $\RR_{1}^{(0)}\uplus\RR_{1}^{(d-1)}$ is isomorphic to $\pnc{G(2,1,n-1)}{1}$;\quad and
		\item the subposets $\RR_{i}^{(s)}$ for $2\leq i\leq n$ and $0\leq s<d$ are isomorphic to $\pnc{G(1,1,n-1)}{1}$.
	\end{itemize}
	However, \cite{bessis06non}*{Lemma~1.23(i)--(iii)} implies that the subposet $\RR_{1}^{(0)}\uplus\RR_{1}^{(d-1)}$ is not connected in our sense, since cover relations in $\RR_{1}^{(0)}\uplus\RR_{1}^{(d-1)}$ do not necessarily correspond to cover relations in $\pnc{G(d,d,n)}{1}(\gamma')$.  It is therefore not suitable as a starting point for a symmetric decomposition of $\pnc{G(d,d,n)}{1}(\gamma')$ either.  See Figure~\ref{fig:g552_bc} for an example.  
\end{remark}

\begin{figure}
	\centering
	\begin{tikzpicture}\small
		\def\x{2};
		\def\y{1.5};
		\draw(3*\x,1*\y) node(n1){$\Bigll\colint{1}{0}\Bigrr$};
		\draw(1*\x,2*\y) node(n2){$\colref{1}{2}{0}$};
		\draw(2*\x,2*\y) node(n3){$\colref{1}{2}{1}$};
		\draw(3*\x,2*\y) node(n4){$\colref{1}{2}{2}$};
		\draw(4*\x,2*\y) node(n5){$\colref{1}{2}{3}$};
		\draw(5*\x,2*\y) node(n6){$\colref{1}{2}{4}$};
		\draw(3*\x,3*\y) node(n7){$\Bigl[\colint{1}{0}\Bigr]_{4}\Bigl[\colint{2}{0}\Bigr]$};
		\draw(n1) -- (n2);
		\draw(n1) -- (n3);
		\draw(n1) -- (n4);
		\draw(n1) -- (n5);
		\draw(n1) -- (n6);
		\draw(n2) -- (n7);
		\draw(n3) -- (n7);
		\draw(n4) -- (n7);
		\draw(n5) -- (n7);
		\draw(n6) -- (n7);
	\end{tikzpicture}
	\caption{The lattice $\pnc{G(5,5,2)}{1}\Bigl(\Bigl[\colint{1}{0}\Bigr]_{4}\Bigl[\colint{2}{0}\Bigr]\Bigr)$.  The subposet $\RR_{1}^{(0)}\uplus\RR_{1}^{(4)}$---which consists of the least and the greatest element---is isomorphic to a $2$-chain, but does not form a cover relation in the whole lattice.}
	\label{fig:g552_bc}
\end{figure}

The decomposition \eqref{eq:gddn_first_decomposition} may not be symmetric, but it can be used to derive a recursive formula for the rank-generating polynomial of $\pnc{G(d,d,n)}{1}$.  

\begin{proposition}\label{prop:gddn_rank_recursion}
	Let $d,n\geq 2$, and let $\RR_{\pnc{G(d,d,n)}{1}}(t)=r_{0}(d,n)+r_{1}(d,n)t+\cdots+r_{n}(d,n)t^{n}$ be the rank-generating polynomial of $\pnc{G(d,d,n)}{1}$.  We have $r_{0}(d,n)=1=r_{n}(d,n)$, and $r_{1}(d,n)=(n-1)(n+d-2)=r_{n-1}(d,n)$.  For $k\in\{2,3,\ldots,n-2\}$ we have
	\begin{align*}
		r_{k}(d,n) & = r_{k}(d,n-1)+r_{k-1}(d,n-1)\\
		& \kern1cm + \left(\sum_{i=1}^{n-3}\sum_{j=0}^{k-1}\frac{r_{j}(d,n-i-1)}{i}\binom{i}{k-j}\binom{i}{k-j-1}\right)\\
		& \kern1cm + \binom{n-2}{k-1}\left(\frac{d}{k}\binom{n-1}{k-1}+\frac{1}{n-2}\left(\binom{n-2}{k-2}+\binom{n-2}{k}\right)\right).
	\end{align*}
\end{proposition}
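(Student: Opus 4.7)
The plan is to compute $r_{k}(d,n)$ by summing, chunk by chunk, the contributions to rank $k$ of each piece in the decomposition \eqref{eq:gddn_first_decomposition}, with the rank offset of each chunk determined by the length of its minimal element as recorded in Lemmas~\ref{lem:first_chunk}--\ref{lem:fifth_chunk}. The boundary values $r_{0}(d,n)=r_{n}(d,n)=1$ are immediate from the existence of a unique minimum and maximum in $\pnc{G(d,d,n)}{1}$. For $r_{1}(d,n)$, I would directly enumerate the atoms using Lemma~\ref{lem:gddn_atoms}: the $2\binom{n-1}{2}$ reflections of the form $\colref{a}{b}{s}$ with $1\leq a<b<n$ and $s\in\{0,d-1\}$, together with the $(n-1)d$ reflections of the form $\colref{a}{n}{s}$, add up to $(n-1)(n-2)+(n-1)d=(n-1)(n+d-2)$, and $r_{n-1}(d,n)$ follows by self-duality.

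For $2\leq k\leq n-2$, I would treat each of the five families of chunks in turn. The chunk $\RR_{1}^{(0)}\uplus\RR_{2}^{(0)}\cong\mathbf{2}\times\pnc{G(d,d,n-1)}{1}$ has minimum at rank $0$ and contributes $r_{k}(d,n-1)+r_{k-1}(d,n-1)$, giving the first two summands of the recursion. The chunks $\RR_{1}^{(1)}$ and $\RR_{2}^{(d-1)}$, both isomorphic to $\pnc{G(1,1,n-2)}{1}$ but sitting at rank offsets $2$ and $1$ respectively, jointly contribute $r_{k-2}(\pnc{G(1,1,n-2)}{1})+r_{k-1}(\pnc{G(1,1,n-2)}{1})$, which on substituting the Narayana formula $r_{l}(\pnc{G(1,1,m)}{1})=\tfrac{1}{m}\binom{m}{l}\binom{m}{l+1}$ becomes $\tfrac{1}{n-2}\binom{n-2}{k-1}\bigl(\binom{n-2}{k-2}+\binom{n-2}{k}\bigr)$. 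The $d$ copies of $\RR_{n}^{(s)}\cong\pnc{G(1,1,n-1)}{1}$ at rank offset $1$ contribute $d\cdot r_{k-1}(\pnc{G(1,1,n-1)}{1})$, which after one binomial manipulation equals $\tfrac{d}{k}\binom{n-1}{k-1}\binom{n-2}{k-1}$.

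The remaining middle chunks $\RR_{i}^{(0)}$ and $\RR_{i}^{(d-1)}$ for $3\leq i\leq n-1$ are direct products with one factor of the form $\pnc{G(d,d,\cdot)}{1}$ and the other of the form $\pnc{G(1,1,\cdot)}{1}$, each sitting at rank offset $1$. Their contribution to rank $k$ is therefore the coefficient of $t^{k-1}$ in the product of the two rank-generating polynomials. After the reindexings $m=i-2$ (for the first family) and $m=n-i$ (for the second), and after substituting the Narayana formula for the $\pnc{G(1,1,m)}{1}$ factor, the contribution takes the convolution shape displayed as the middle summand of the recursion.

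The main obstacle is purely bookkeeping: keeping track of the five separate families, ensuring that each chunk is counted exactly once, correctly identifying which factor of each direct product belongs to which family (so that the $r_{j}(d,\cdot)$ variable and the Narayana-shaped factor end up on the right sides of the convolutions), and aligning the rank offsets $0$, $1$, $2$ with the right shifts in the indices. No new structural input is needed beyond \eqref{eq:gddn_first_decomposition} and the isomorphism lemmas already established.
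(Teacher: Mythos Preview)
Your proposal is correct and follows essentially the same approach as the paper's proof: both arguments read off the contributions to rank $k$ chunk by chunk from the decomposition~\eqref{eq:gddn_first_decomposition}, using Lemmas~\ref{lem:first_chunk}--\ref{lem:fifth_chunk} to identify each piece and its rank offset, then substitute the Narayana formula for the $\pnc{G(1,1,m)}{1}$ factors and reindex. The only cosmetic difference is that the paper invokes the isomorphism $\RR_{i}^{(0)}\cong\RR_{n-i+2}^{(d-1)}$ to group the two middle families with a factor of $2$, whereas you reindex each family separately via $m=i-2$ and $m=n-i$; these of course yield the same sum.
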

\begin{proof}
	The value $r_{0}(d,n)=1=r_{n}(d,n)$ follows from the fact that $\pnc{G(d,d,n)}{1}$ has a least and a greatest element.  The value $r_{1}(d,n)=(n-1)(n+d-2)=r_{n-1}(d,n)$ follows from Lemmas~\ref{lem:gddn_atoms} and \ref{lem:gddn_coatoms}.  If $k\in\{2,3,\ldots,n-2\}$, then each part of the decomposition \eqref{eq:gddn_first_decomposition} contributes to $r_{k}(d,n)$.  In order to determine these contributions, let us recall a few facts.  First of all, recall from \cite{kreweras72sur}*{Corollaire~4.1} that the rank-generating polynomial of $\pnc{G(1,1,n)}{1}$ is given by
	\begin{displaymath}
		\RR_{\pnc{G(1,1,n)}{1}}(t) = \text{Nar}(n,1)+\text{Nar}(n,2)t+\cdots\text{Nar}(n,n)t^{n-1},
	\end{displaymath}
	where $\text{Nar}(n,k)=\tfrac{1}{n}\binom{n}{k}\binom{n}{k-1}$ is a Narayana number.  Moreover, if $\PP$ and $\QQ$ are graded posets, then the $k\th$ coefficient of $\RR_{\PP\times\QQ}(t)$ is given by $\sum_{j=0}^{k}{r_{j}(\PP)r_{k-j}(\QQ)}$.  It follows from Lemmas~\ref{lem:first_chunk}--\ref{lem:fifth_chunk} that the following elements contribute to the $k\th$ rank of $\pnc{G(d,d,n)}{1}$ for $k\in\{2,3,\ldots,n-2\}$:
	\begin{itemize}
		\item the elements of rank $k$ in $\mathbf{2}\times\pnc{G(d,d,n-1)}{1}$,
		\item $d$ times the elements rank $k-1$ of $\pnc{G(1,1,n-1)}{1}$,
		\item for each $i\in\{3,4,\ldots,n-1\}$ twice the elements of rank $k-1$ in $\pnc{G(d,d,n-i+1)}{1}\times\pnc{G(1,1,i-2)}{1}$,
		\item the elements of rank $k-2$ in $\pnc{G(1,1,n-2)}{1}$, and
		\item the elements of rank $k-1$ in $\pnc{G(1,1,n-2)}{1}$.
	\end{itemize}
	If we put the corresponding numbers together, we obtain
	\begin{align*}
		r_{k}(d,n) & = \sum_{j=0}^{k}{r_{j}(d,n-1)r_{k-j}(\mathbf{2})} + 2\sum_{i=3}^{n-1}\sum_{j=0}^{k-1}{r_{j}(d,n-i+1)r_{k-j-1}\bigl(\pnc{G(1,1,i-2)}{1}\bigr)}\\
		& \kern1cm + d\cdot r_{k-1}\bigl(\pnc{G(1,1,n-1)}{1}\bigr) + r_{k-2}\bigl(\pnc{G(1,1,n-2)}{1}\bigr) + r_{k-1}\bigl(\pnc{G(1,1,n-2)}{1}\bigr).
	\end{align*}
	Since $\mathbf{2}$ has rank $1$, only the terms with $j\in\{k-1,k\}$ contribute to the first sum, and as mentioned before, we have $r_{k}\bigl(\pnc{G(1,1,n)}{1}\bigr)=\text{Nar}(n,k+1)$.  Therefore, the above formula simplifies to
	\begin{align*}
		r_{k}(d,n) & = r_{k-1}(d,n-1)+r_{k}(d,n-1) + 2\sum_{i=3}^{n-1}\sum_{j=0}^{k-1}{r_{j}(d,n-i+1)\text{Nar}(i-2,k-j)}\\
		& \kern1cm + d\cdot\text{Nar}(n-1,k) + \text{Nar}(n-2,k-1) + \text{Nar}(n-2,k).
	\end{align*}
	If we shift the indices in the second sum, and regroup the binomial coefficients appearing in the terms involving Narayana numbers, then we arrive at the desired formula.
\end{proof}

\subsection{A Second Decomposition of $\pnc{G(d,d,n)}{1}(\gamma)$}
	\label{sec:second_decomposition}
We have seen in Section~\ref{sec:first_decomposition} that the decomposition \eqref{eq:gddn_first_decomposition} is not symmetric when $n>2$, which is due to the fact that the parts $R_{1}^{(1)}$ and $R_{2}^{(d-1)}$ do not sit in $\pnc{G(d,d,n)}{1}(\gamma)$ symmetrically, and their induced subposet is disconnected.  We now describe a modification of this decomposition that suits our needs.  Loosely speaking, we peel a copy of $R_{1}^{(1)}$ and $R_{2}^{(d-1)}$ off of $R_{n}^{(d-1)}$, and glue these copies to $R_{1}^{(1)}$ and $R_{2}^{(d-1)}$, respectively.  This produces two symmetric pieces that are each isomorphic to a direct product of a two-chain and a smaller noncrossing partition lattice (see Lemma~\ref{lem:chunk_rearrangement}).  The remaining part of $R_{n}^{(d-1)}$ is symmetrically decomposable into direct products of smaller noncrossing partition lattices by virtue of Theorem~\ref{thm:g11n_decomposition} (see Lemma~\ref{lem:middle_chunk}).

Recall from Lemma~\ref{lem:fifth_chunk} that $\RR_{1}^{(1)}$ has least element $\Bigl[\colint{1}{0}\Bigr]\Bigl[\colint{n}{0}\Bigr]_{d-1}$ and greatest element $\Bigl[\colint{1}{0}\Bigr]\Bigl[\colint{n}{0}\Bigr]_{d-1}\Bigll\colint{2}{0}\;\ldots\;\colint{(n-1)}{0}\Bigrr$.  The map 
\begin{displaymath}
	f_{1}:R_{1}^{(1)}\to R_{n}^{(d-1)},\quad x\mapsto\colref{1}{n}{d-2}x
\end{displaymath}
is clearly injective, since group multiplication is.  Moreover, since $\Bigl[\colint{1}{0}\Bigr]\Bigl[\colint{n}{0}\Bigr]_{d-1}=\colref{1}{n}{d-2}\colref{1}{n}{d-1}$, the image of $f_{1}$ constitutes the interval 
\begin{displaymath}
	E_{1} = \left[\colref{1}{n}{d-1},\colref{1}{n}{d-1}\Bigll\colint{2}{0}\;\ldots\;\colint{(n-1)}{0}\Bigrr\right]_{T}
\end{displaymath}
in $\pnc{G(d,d,n)}{1}(\gamma)$, which is isomorphic to $\pnc{G(1,1,n-2)}{1}$ in its own right.  Lemma~\ref{lem:fifth_chunk} implies now that the restriction $f_{1}:R_{1}^{(1)}\to E_{1}$ is an isomorphism.

Recall further that $\RR_{2}^{(d-1)}$ has least element $\colref{1}{2}{d-1}$ and greatest element $\Bigll(\colint{1}{0}\;\colint{2}{d-1}\;\ldots\;\colint{(n-1)}{d-1}\Bigrr$.  The map
\begin{displaymath}
	f_{2}:R_{2}^{(d-1)}\to R_{n}^{(d-1)},\quad x\mapsto\colref{2}{n}{0}x
\end{displaymath}
is injective, and since $\colref{2}{n}{0}\colref{1}{2}{d-1}=\Bigll\colint{1}{0}\;\colint{n}{d-1}\;\colint{2}{d-1}\Bigrr$ the image of $f_{2}$ constitutes the interval 
\begin{displaymath}
	E_{2} = \left[\Bigll\colint{1}{0}\;\colint{n}{d-1}\;\colint{2}{d-1}\Bigrr,\Bigll\colint{1}{0}\;\colint{n}{d-1}\;\colint{2}{d-1}\;\ldots\;\colint{(n-1)}{d-1}\Bigrr\right]_{T}
\end{displaymath}
in $\pnc{G(d,d,n)}{1}(\gamma)$, which is again isomorphic to $\pnc{G(1,1,n-2)}{1}$.  Lemma~\ref{lem:fifth_chunk} implies again that the restriction $f_{2}:R_{2}^{(d-1)}\to E_{2}$ is an isomorphism.  Define
\begin{align*}
	D_{1} & = R_{1}^{(1)}\uplus E_{1},\\
	D_{2} & = R_{2}^{(d-1)}\uplus E_{2},\quad\text{and}\\
	D & = R_{n}^{(d-1)}\setminus\Bigl(E_{1}\uplus E_{2}\Bigr),
\end{align*}
as well as $\DD_{1}=\bigl(D_{1},\leq_{T}\bigr), \DD_{2}=\bigl(D_{2},\leq_{T}\bigr)$, and $\DD=\bigl(D,\leq_{T})$.  The following lemma is now immediate.

\begin{lemma}\label{lem:chunk_rearrangement}
	For $d,n\geq 2$, we have $D_{1}\uplus D_{2}\uplus D = R_{1}^{(1)}\uplus R_{2}^{(d-1)}\uplus R_{n}^{(d-1)}$.  Moreover, $D_{1}\cong D_{2}\cong\mathbf{2}\times\pnc{G(1,1,n-2)}{1}$, and the least elements of $\DD_{1}$ and $\DD_{2}$ have length $1$, while the greatest elements of $\DD_{1}$ and $\DD_{2}$ have length $n-1$.
\end{lemma}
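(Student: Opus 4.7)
The plan is to prove both assertions by identifying $D_1$ and $D_2$ with explicit closed intervals of $\pnc{G(d,d,n)}{1}(\gamma)$ and then invoking Lemma~\ref{lem:bottom_intervals}.

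For the disjoint union, first note that $R_1^{(1)}$, $R_2^{(d-1)}$, and $R_n^{(d-1)}$ are pairwise disjoint, since their elements send $\colint{1}{0}$ to three different targets, and that $E_1, E_2 \subseteq R_n^{(d-1)}$ by construction of $f_1$ and $f_2$. The only nontrivial disjointness is $E_1 \cap E_2 = \emptyset$, which I propose to verify by comparing the image of $\colint{n}{d-1}$: every $u \in E_1$ has the form $\colref{1}{n}{d-1}\cdot c$ with $c\leq_T \Bigll\colint{2}{0}\;\ldots\;\colint{(n-1)}{0}\Bigrr$ fixing $\colint{n}{d-1}$, so $u(\colint{n}{d-1})=\colint{1}{0}$; whereas every $v \in E_2$ has the form $\colref{2}{n}{0}\cdot y$ with $y\leq_T \Bigll\colint{1}{0}\;\colint{2}{d-1}\;\ldots\;\colint{(n-1)}{d-1}\Bigrr$ also fixing $\colint{n}{d-1}$, giving $v(\colint{n}{d-1})=\colint{2}{d-1}$. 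The identity $D_1\uplus D_2\uplus D=R_1^{(1)}\uplus R_2^{(d-1)}\uplus R_n^{(d-1)}$ then follows at once from the definition of $D$.

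For the isomorphism $\DD_1 \cong \mathbf{2}\times\pnc{G(1,1,n-2)}{1}$, set $t' = \colref{1}{n}{d-1}$, $x_{\min}=\Bigl[\colint{1}{0}\Bigr]\Bigl[\colint{n}{0}\Bigr]_{d-1}$, $\sigma = \Bigll\colint{2}{0}\;\ldots\;\colint{(n-1)}{0}\Bigrr$, and $x_{\max}=x_{\min}\sigma$, and consider the closed interval $I_1=[t',x_{\max}]_T$. The plan is to show $D_1=I_1$. The inclusion $D_1\subseteq I_1$ is straightforward: every element of $E_1$ lies above $t'$ by the definition of $E_1$, and every $x_{\min}c\in R_1^{(1)}$ with $c\leq_T\sigma$ satisfies $t'\leq_T x_{\min}c\leq_T x_{\max}$ because $c$ has disjoint support from both $t'$ and $x_{\min}$, and because the identity $x_{\min}=\colref{1}{n}{d-2}\cdot t'$ recorded just before the definition of $E_1$ gives $t'\leq_T x_{\min}$. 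Conversely, Lemma~\ref{lem:bottom_intervals} yields $I_1\cong[\varepsilon,t'x_{\max}]_T$, and a short cycle-notation calculation gives $t'x_{\min}=\colref{1}{n}{0}$, so $t'x_{\max}=\colref{1}{n}{0}\cdot\sigma$ is a product of two commuting factors on disjoint supports. Since both factors are parabolic Coxeter elements (a reflection generating a copy of $G(1,1,2)$ and the long simultaneous cycle on $\{2,\ldots,n-1\}$), Lemma~\ref{lem:gddn_parabolic_subgroups} together with Lemma~\ref{lem:gddn_single_short_cycles} force this interval to split as $\mathbf{2}\times\pnc{G(1,1,n-2)}{1}$. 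In particular $|I_1|=2\lvert\pnc{G(1,1,n-2)}{1}\rvert=|D_1|$ by Lemma~\ref{lem:fifth_chunk}, which combined with $D_1\subseteq I_1$ gives $D_1=I_1$ and the desired isomorphism.

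The argument for $\DD_2$ is entirely parallel: $D_2$ coincides with the closed interval from the minimum $\colref{1}{2}{d-1}$ of $R_2^{(d-1)}$ to the maximum of $E_2$, and translating by $\colref{1}{2}{d-1}$ via Lemma~\ref{lem:bottom_intervals} again reduces to the interval below the commuting product $\colref{1}{n}{d-1}\cdot\sigma$, yielding $\DD_2\cong\mathbf{2}\times\pnc{G(1,1,n-2)}{1}$. The length assertions for the extremes of $\DD_1$ and $\DD_2$ are immediate from the explicit minima and maxima of $R_1^{(1)}$, $E_1$, $R_2^{(d-1)}$, and $E_2$ (Lemmas~\ref{lem:fifth_chunk} and \ref{lem:gddn_atoms}). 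The main obstacle I anticipate is the cycle-notation simplification of $t'x_{\max}$ (and its $D_2$-counterpart) into a product of commuting factors on disjoint supports; once that computation is in hand, Lemma~\ref{lem:bottom_intervals} and a cardinality count close the argument.
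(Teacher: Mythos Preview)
Your argument is correct and in several places more explicit than the paper's (notably the verification that $E_1\cap E_2=\emptyset$, which the paper passes over silently).  The one small omission is in the inclusion $D_1\subseteq I_1$: you check that elements of $E_1$ lie above $t'$ but do not say why they lie below $x_{\max}$.  This is immediate once you have computed $t'x_{\min}=\colref{1}{n}{0}$, since then $(t'\sigma)^{-1}x_{\max}=\sigma^{-1}(t'x_{\min})\sigma=\colref{1}{n}{0}$ is a reflection and hence $t'\sigma\leq_T x_{\max}$; but it should be said, as the cardinality count at the end only closes the argument once the containment is established.

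The paper's route is shorter and runs parallel to the proof of Lemma~\ref{lem:first_chunk}: rather than identifying $D_1$ with a closed interval and translating, it observes that $f_1\colon R_1^{(1)}\to E_1$ is a poset isomorphism (an instance of Lemma~\ref{lem:bottom_intervals}, since $\colref{1}{n}{d-2}\leq_T x_{\min}$) with $f_1(x)\lessdot_T x$ for every $x$, so $D_1=E_1\uplus R_1^{(1)}$ is two isomorphic copies of $\pnc{G(1,1,n-2)}{1}$ stacked by cover relations, i.e.\ $\mathbf{2}\times\pnc{G(1,1,n-2)}{1}$.  Your approach trades that stacking picture for an interval identification plus a cardinality count; it is a bit longer, but it has the advantage of exhibiting $D_1$ and $D_2$ explicitly as closed intervals of $\pnc{G(d,d,n)}{1}(\gamma)$, a fact the paper's proof leaves implicit.
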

\begin{proof}
	It is clear by definition that $D_{1}\uplus D_{2}\uplus D = R_{1}^{(1)}\uplus R_{2}^{(d-1)}\uplus R_{n}^{(d-1)}$.  In view of Lemma~\ref{lem:bottom_intervals}, the maps $f_{1}$ and $f_{2}$ are poset isomorphisms, when we restrict their images to $E_{1}$ and $E_{2}$, respectively.  It follows now from Lemma~\ref{lem:fifth_chunk} that $D_{1}\cong D_{2}\cong\mathbf{2}\times\pnc{G(1,1,n-2)}{1}$.  
	
	The least element of $\DD_{1}$ is $\colref{1}{n}{d-1}$, while the greatest element of $\DD_{1}$ is $\Bigl[\colint{1}{0}\Bigr]\Bigl[\colint{n}{0}\Bigr]_{d-1}\Bigll\colint{2}{0}\;\ldots\;\colint{(n-1)}{0}\Bigrr$.  Analogously, the least element of $\DD_{2}$ is $\colref{1}{2}{d-1}$, while the greatest element of $\DD_{2}$ is $\Bigll(\colint{1}{0}\;\colint{n}{d-1}\;\colint{2}{d-1}\;\ldots\;\colint{(n-1)}{d-1}\Bigrr$, which completes the proof.
\end{proof}

\begin{lemma}\label{lem:middle_chunk}
	For $d,n\geq 2$, we have 
	\begin{displaymath}
		D \cong \biguplus_{i=3}^{n-1}{\left(\nc{G(1,1,i-2)}{1}\times\nc{G(1,1,n-i)}{1}\right)}.
	\end{displaymath}	
	Moreover, this decomposition is symmetric as a poset decomposition.
\end{lemma}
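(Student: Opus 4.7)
The plan is to transport the Simion--Ullman decomposition (Theorem~\ref{thm:g11n_decomposition}) through the isomorphism $\RR_n^{(d-1)}\cong\pnc{G(1,1,n-1)}{1}$ from Lemma~\ref{lem:second_chunk}. The least element of $\RR_n^{(d-1)}$ is the reflection $\colref{1}{n}{d-1}$, so by Lemma~\ref{lem:bottom_intervals} the map $\phi(w)=\colref{1}{n}{d-1}w$ is a poset isomorphism from $R_n^{(d-1)}$ onto the interval below the simultaneous cycle $c'=\Bigll\colint{2}{d-1}\;\colint{3}{d-1}\;\ldots\;\colint{n}{d-1}\Bigrr$; by Lemma~\ref{lem:gddn_single_short_cycles} this interval is a copy of $\pnc{G(1,1,n-1)}{1}$, and $\phi$ shifts the ambient rank by~$-1$.

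Inside this copy of $\pnc{G(1,1,n-1)}{1}$, I would apply Theorem~\ref{thm:g11n_decomposition} with the distinguished ``first element'' chosen to be $\colint{n}{d-1}$ (in the cyclic order dictated by $c'$), obtaining Simion--Ullman parts $R'_j=\{u\mid u(\colint{n}{d-1})=\colint{j}{d-1}\}$ for $j\in\{n,2,3,\ldots,n-1\}$, with $R'_n\uplus R'_2\cong\mathbf{2}\times\pnc{G(1,1,n-2)}{1}$ and $R'_i\cong\pnc{G(1,1,i-2)}{1}\times\pnc{G(1,1,n-i)}{1}$ for $3\le i\le n-1$.

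The crux is to verify $\phi(E_1)=R'_n$ and $\phi(E_2)=R'_2$. A short computation gives $\phi(E_1)=\bigl[\varepsilon,\Bigll\colint{2}{0}\;\colint{3}{0}\;\ldots\;\colint{(n-1)}{0}\Bigrr\bigr]$, whose maximum is the Coxeter element of the parabolic subgroup fixing $\colint{n}{d-1}$; hence this interval is precisely $R'_n$. Similarly, $\phi$ sends the least element of $E_2$ to the reflection $\colref{2}{n}{0}$ and the greatest element of $E_2$ (which coincides with the greatest element of $R_n^{(d-1)}$) to $c'$, so $\phi(E_2)=[\colref{2}{n}{0},c']=R'_2$.

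With the matching established, Theorem~\ref{thm:g11n_decomposition} yields $\phi(D)=\biguplus_{i=3}^{n-1}R'_i$, and pulling back under $\phi^{-1}$ gives the claimed decomposition. For the symmetry, the Simion--Ullman decomposition is symmetric in $\pnc{G(1,1,n-1)}{1}$ (rank $n-2$), so the minimum and maximum ranks of each $R'_i$ sum to $n-2$ there; pulling back through $\phi^{-1}$ adds $1$ to both, giving a sum of $n$, which matches the rank of the ambient poset $\pnc{G(d,d,n)}{1}(\gamma)$. I expect the main obstacle to be the matching step, since it requires carefully tracking how left-multiplication by $\colref{1}{n}{d-1}$ acts on the generalised cycle notation and interacts with the choice of distinguished element used in the Simion--Ullman decomposition.
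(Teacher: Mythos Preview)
Your argument is correct and follows essentially the same route as the paper's own proof: translate $R_n^{(d-1)}$ via left multiplication by $\colref{1}{n}{d-1}$ to the interval below the simultaneous $(n-1)$-cycle, identify this with $\pnc{G(1,1,n-1)}{1}$, and then invoke Theorem~\ref{thm:g11n_decomposition} after checking that $E_1$ and $E_2$ land on the two ``special'' Simion--Ullman pieces. Your version is in fact somewhat more explicit than the paper's---you carry out the computations $\phi(E_1)=[\varepsilon,\Bigll\colint{2}{0}\;\ldots\;\colint{(n-1)}{0}\Bigrr]$ and $\phi(E_2)=[\colref{2}{n}{0},c']$ and spell out the rank-shift argument for symmetry, whereas the paper simply asserts the correspondence. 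One cosmetic remark: your $c'=\Bigll\colint{2}{d-1}\;\ldots\;\colint{n}{d-1}\Bigrr$ is the same group element as the paper's $\Bigll\colint{2}{0}\;\ldots\;\colint{n}{0}\Bigrr$, since simultaneous cycles are invariant under a global colour shift; and your choice of $\colint{n}{d-1}$ as the distinguished element amounts to the paper's implicit relabelling $n\mapsto 1$, $j\mapsto j$ for $2\le j\le n-1$ when passing to $\nc{G(1,1,n-1)}{1}\bigl((1\;2\;\cdots\;n-1)\bigr)$.
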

\begin{proof}
	It follows from Lemma~\ref{lem:second_chunk} that the map $x\mapsto\colref{1}{n}{(d-1)}x$ is a bijection from $R_{n}^{(d-1)}$ to
	\begin{displaymath}
		\Bigl\{x\in\nc{G(d,d,n)}{1}(\gamma)\mid x\leq_{T}\Bigll\colint{2}{0}\;\ldots\;\colint{n}{0}\Bigrr\Bigr\},
	\end{displaymath}
	and this set is in bijection with $\nc{G(1,1,n-1)}{1}\bigl((1\;2\;\ldots\;(n-1)\bigr)$.  Under this bijection, the elements in $E_{1}=f_{1}\Bigl(R_{1}^{(1)}\Bigr)$ correspond to the permutations in $R_{1}$, while the elements in $E_{2}=f_{2}\Bigl(R_{2}^{(d-1)}\Bigr)$ correspond to the permutations in $R_{2}$.  Now the claim follows from Theorem~\ref{thm:g11n_decomposition}.  
\end{proof}

\begin{theorem}\label{thm:gddn_symmetric_decomposition}
	For $d,n\geq 2$ the decomposition 
	\begin{equation}\label{eq:gddn_second_decomposition}
		\nc{G(d,d,n)}{1}(\gamma) = R_{1}^{(0)}\uplus R_{2}^{(0)}\uplus\biguplus_{i=3}^{n-1}\left(R_{i}^{(0)}\uplus R_{i}^{(d-1)}\right)\uplus\biguplus_{s=0}^{d-2}{R_{n}^{(s)}}\uplus D_{1}\uplus D_{2}\uplus D
	\end{equation}
	is symmetric.
\end{theorem}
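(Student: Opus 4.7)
The plan is to verify two properties of the decomposition in \eqref{eq:gddn_second_decomposition}: first, that it genuinely partitions $\nc{G(d,d,n)}{1}(\gamma)$ and that each listed piece is a connected part in the sense of Section~\ref{sec:posets}; second, that each such part has a unique minimum and a unique maximum whose ranks sum to $n = \rk(\pnc{G(d,d,n)}{1})$. Once these are in hand, the bijection between minimal and maximal elements required by the definition of a symmetric decomposition is forced (each part has exactly one of each), and the only nontrivial check is the rank identity $\rk(p)+\rk(q)=n$ on each part.

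The partitioning claim is immediate: starting from \eqref{eq:gddn_first_decomposition}, Lemma~\ref{lem:chunk_rearrangement} rearranges $R_1^{(1)}\uplus R_2^{(d-1)}\uplus R_n^{(d-1)}$ as $D_1\uplus D_2\uplus D$, which yields \eqref{eq:gddn_second_decomposition}. For the rank bookkeeping I would simply read off the min/max ranks from the lemmas of Section~\ref{sec:first_decomposition} together with Lemmas~\ref{lem:chunk_rearrangement} and \ref{lem:middle_chunk}. Lemma~\ref{lem:first_chunk} places the min and max of $R_1^{(0)}\uplus R_2^{(0)}$ at ranks $0$ and $n$; Lemmas~\ref{lem:second_chunk}, \ref{lem:third_chunk}, \ref{lem:fourth_chunk}, and \ref{lem:chunk_rearrangement} place the min and max at ranks $1$ and $n-1$ for each of $R_n^{(s)}$, $R_i^{(0)}$, $R_i^{(d-1)}$, $D_1$, and $D_2$ in the appropriate ranges of $i$ and $s$. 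In every such case the part is isomorphic to $\mathbf{2}\times\pnc{W'}{1}$ or to a direct product of two noncrossing partition lattices, so it has a single minimum and a single maximum (and is connected by virtue of the cover relations witnessed in the proofs of those lemmas, e.g.\ $x\lessdot_T\colref{1}{2}{0}x$ inside $R_1^{(0)}\uplus R_2^{(0)}$, and the analogous cover relations $x\lessdot_T\colref{1}{n}{d-2}x$ and $x\lessdot_T\colref{2}{n}{0}x$ that glue $E_1,E_2$ to $R_1^{(1)},R_2^{(d-1)}$).

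The only delicate step is the treatment of $D$. By Lemma~\ref{lem:middle_chunk}, $D$ further splits into pieces $\nc{G(1,1,i-2)}{1}\times\nc{G(1,1,n-i)}{1}$ for $3\le i\le n-1$, and this splitting is the image, under the bijection $x\mapsto\colref{1}{n}{d-1}x$, of the Simion--Ullman pieces $R_3,\ldots,R_{n-1}$ inside $\pnc{G(1,1,n-1)}{1}$. Theorem~\ref{thm:g11n_decomposition} guarantees that those Simion--Ullman pieces sit symmetrically in $\pnc{G(1,1,n-1)}{1}$, i.e.\ each has min/max at ranks $r$ and $(n-2)-r$ for some $r\ge 1$. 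Since $\colref{1}{n}{d-1}$ is a reflection, left-multiplication by it shifts ranks by exactly $1$, so the transported pieces have min/max at ranks $r+1$ and $n-1-r$ in $\pnc{G(d,d,n)}{1}(\gamma)$, summing to $n$ as needed. This ``rank shift by one'' bookkeeping, together with the verification that the subposet structure is transported faithfully (which is precisely the content of Lemma~\ref{lem:bottom_intervals}), is the main, if modest, obstacle; everything else is a direct application of the lemmas already assembled in Section~\ref{sec:first_decomposition}.
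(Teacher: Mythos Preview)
Your proposal is correct and follows essentially the same route as the paper: the paper's proof is a one-line citation of Lemmas~\ref{lem:first_chunk}--\ref{lem:fourth_chunk}, \ref{lem:chunk_rearrangement}, and \ref{lem:middle_chunk}, and you have simply unpacked how those lemmas combine to verify the partition property and the rank bookkeeping on each piece (including the rank-shift-by-one argument for the Simion--Ullman pieces inside $D$).
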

\begin{proof}
	This follows from Lemmas~\ref{lem:first_chunk}--\ref{lem:fourth_chunk} and Lemmas~\ref{lem:chunk_rearrangement} and \ref{lem:middle_chunk}.  
\end{proof}

In particular, the symmetric decomposition of $\pnc{G(d,d,n)}{1}(\gamma)$ in Theorem~\ref{thm:gddn_symmetric_decomposition} consists of parts that are direct products of noncrossing partition lattices of smaller rank.  As a consequence, we can prove Theorem~\ref{thm:gddn_sbd} by induction on rank.  The next lemma states that the induction hypthesis holds. 

\begin{lemma}\label{lem:induction_base}
	For $d\geq 2$, the lattices $\pnc{G(d,d,2)}{1}$ and $\pnc{G(d,d,3)}{1}$ admit a symmetric Boolean decomposition. 
\end{lemma}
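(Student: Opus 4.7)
The plan is to handle $n=2$ directly and then bootstrap to $n=3$ by invoking the symmetric decomposition \eqref{eq:gddn_second_decomposition} from Theorem~\ref{thm:gddn_symmetric_decomposition}, which reduces the $n=3$ case to the $n=2$ case and to the known symmetric Boolean decompositions of $\pnc{G(1,1,k)}{1}$ for small $k$ (Corollary~\ref{cor:g11n_sbd}).

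For $n=2$, Lemma~\ref{lem:gddn_atoms} shows that $\pnc{G(d,d,2)}{1}(\gamma)$ has rank two, a unique minimum $\varepsilon$, a unique maximum $\gamma$, and exactly $d$ pairwise incomparable atoms $\colref{1}{2}{s}$ for $s=0,1,\ldots,d-1$, each of which is also a coatom.  I would take as a single part the Boolean sublattice $\{\varepsilon,\colref{1}{2}{0},\colref{1}{2}{1},\gamma\}$---a Boolean lattice of rank two whose cover relations are cover relations in the ambient lattice---and let each of the remaining $d-2$ atoms form its own singleton part.  Every singleton sits on the middle rank $1$ and the Boolean part is symmetric about rank $1$, so this produces a symmetric Boolean decomposition.

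For $n=3$, I would apply Theorem~\ref{thm:gddn_symmetric_decomposition} to obtain the decomposition \eqref{eq:gddn_second_decomposition}.  Both biguplus terms indexed by $i\in\{3,\ldots,n-1\}$ are vacuous when $n=3$, and Lemma~\ref{lem:middle_chunk} then forces $D$ to be empty as well.  The surviving parts are $\RR_{1}^{(0)}\uplus\RR_{2}^{(0)}\cong\mathbf{2}\times\pnc{G(d,d,2)}{1}$ from Lemma~\ref{lem:first_chunk}, the $d-1$ copies $\RR_{3}^{(s)}\cong\pnc{G(1,1,2)}{1}$ for $0\leq s\leq d-2$ from Lemma~\ref{lem:second_chunk}, and $\DD_{1}\cong\DD_{2}\cong\mathbf{2}\times\pnc{G(1,1,1)}{1}$ from Lemma~\ref{lem:chunk_rearrangement}.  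Each of these parts is a direct product of posets that already admit a symmetric Boolean decomposition: $\pnc{G(d,d,2)}{1}$ by the case just handled, $\pnc{G(1,1,k)}{1}$ for $k\in\{1,2\}$ by Corollary~\ref{cor:g11n_sbd}, and $\mathbf{2}$ trivially.  Since admitting a symmetric Boolean decomposition is preserved under direct products \cite{petersen15eulerian}*{Observation~4.4}, each part carries one, and gluing these sub-decompositions together produces the desired symmetric Boolean decomposition of $\pnc{G(d,d,3)}{1}(\gamma)$.

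The only delicate point---and the place one could most easily make a mistake---is verifying that each sub-decomposition of a part sits symmetrically with respect to the ambient rank function of $\pnc{G(d,d,3)}{1}(\gamma)$ rather than merely with respect to the rank function of the part viewed in isolation.  This is exactly what the explicit rank-position statements in Lemmas~\ref{lem:first_chunk}, \ref{lem:second_chunk}, and~\ref{lem:chunk_rearrangement} are designed to guarantee, and without that bookkeeping the proof would not go through.
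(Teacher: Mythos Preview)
Your proposal is correct and follows essentially the same route as the paper: handle $n=2$ by isolating one rank-$2$ Boolean sublattice and leaving the remaining atoms as singletons, then treat $n=3$ by specializing the decomposition \eqref{eq:gddn_second_decomposition} (where the $i$-indexed union and $D$ are empty) and invoking the $n=2$ case together with the preservation of symmetric Boolean decompositions under direct products. Your explicit remark about checking ambient symmetry via the rank-position statements in Lemmas~\ref{lem:first_chunk}, \ref{lem:second_chunk}, and~\ref{lem:chunk_rearrangement} is a welcome addition that the paper leaves implicit.
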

\begin{proof}
	Recall that for $d\geq 2$, the lattice $\pnc{G(d,d,2)}{1}$ has rank $2$ and exactly $d$ atoms.  Therefore, it trivially admits a symmetric Boolean decomposition.  (Fix one Boolean lattice of rank $2$, involving the least and the greatest element, and consider all the remaining atoms as Boolean lattices of rank $0$.)  
	
	Now consider the Coxeter element $\gamma\in G(d,d,3)$ as defined in \eqref{eq:gddn_coxeter_element}, as well as the decomposition of $\nc{G(d,d,3)}{1}(\gamma)$ from \eqref{eq:gddn_second_decomposition}, which can be simplified to
	\begin{displaymath}
		\nc{G(d,d,3)}{1}(\gamma) = R_{1}^{(0)}\uplus R_{2}^{(0)}\uplus\biguplus_{s=0}^{d-2}{R_{3}^{(s)}}\uplus D_{1}\uplus D_{2}\uplus D.
	\end{displaymath}	
	Lemma~\ref{lem:first_chunk} implies that the subposet $\RR_{1}^{(0)}\uplus\RR_{2}^{(0)}$ is isomorphic to $\mathbf{2}\times\pnc{G(d,d,2)}{1}$, and the first part of this proof, together with the fact that symmetric Boolean decompositions are preserved under direct products~\cite{petersen15eulerian}*{Observation~4.4}, implies that we can decompose this part into symmetric Boolean lattices.  Lemma~\ref{lem:second_chunk} implies that $\RR_{3}^{(s)}\cong\pnc{G(1,1,2)}{1}\cong\mathbf{2}$ for $0\leq s<d-1$, which is the Boolean lattice of rank $1$.  Lemma~\ref{lem:chunk_rearrangement} implies that $\DD_{1}\cong\DD_{2}\cong\mathbf{2}$, and Lemma~\ref{lem:middle_chunk} implies that $D=\emptyset$, and we are done.  
	
	Since Proposition~\ref{prop:noncrossing_partitions_independent} implies that the structure of $\pnc{G(d,d,n)}{1}$ does not depend on the choice of Coxeter element, we conclude that $\pnc{G(d,d,3)}{1}$ admits a symmetric Boolean decomposition for any choice of Coxeter element.
\end{proof}

\begin{proof}[Proof of Theorem~\ref{thm:gddn_sbd}]
	First of all we consider $\pnc{G(d,d,n)}{1}(\gamma)$, where $\gamma\in G(d,d,n)$ is the Coxeter element defined in \eqref{eq:gddn_coxeter_element}.  Lemma~\ref{lem:induction_base} implies that we can assume that $\pnc{G(d,d,n')}{1}(\gamma')$ admits a symmetric Boolean decomposition for every $n'<n$ and every parabolic Coxeter element $\gamma'\in G(d,d,n')$.  We further use the fact that symmetric Boolean decompositions are preserved under direct products~\cite{petersen15eulerian}*{Observation~4.4} and Corollary~\ref{cor:g11n_sbd}, as well as Lemmas~\ref{lem:first_chunk}--\ref{lem:fourth_chunk} and Lemmas~\ref{lem:chunk_rearrangement} and \ref{lem:middle_chunk} to conclude that the decomposition of $\pnc{G(d,d,n)}{1}(\gamma)$ described in \eqref{eq:gddn_second_decomposition} consists of pieces all of which admit a symmetric Boolean decomposition, and we are done.
	
	Once more we can generalize the existence of a symmetric Boolean decomposition from $\pnc{G(d,d,n)}{1}(\gamma)$ to every Coxeter element of $G(d,d,n)$ using Proposition~\ref{prop:noncrossing_partitions_independent}.  Propositions~\ref{prop:sbd_implies_scd}--\ref{prop:symmetric_boolean_gamma_nonnegative} conclude the proof.
\end{proof}

\begin{amssidewaysfigure}
	\centering
	\begin{tikzpicture}\small
		\def\x{1.75};
		\def\y{5};
		\def\s{.67};
		\draw(6.5*\x,.5*\y) node[scale=\s](n1){$\Bigll\colint{1}{0}\Bigrr$};
		\draw(1*\x,1*\y) node[scale=\s](n2){$\Bigll\colint{2}{0}\;\colint{3}{0}\Bigrr$};
		\draw(2*\x,1*\y) node[scale=\s](n3){$\Bigll\colint{1}{0}\;\colint{3}{0}\Bigrr$};
		\draw(3*\x,1*\y) node[scale=\s](n4){$\Bigll\colint{1}{0}\;\colint{2}{4}\Bigrr$};
		\draw(4*\x,1*\y) node[scale=\s](n5){$\Bigll\colint{1}{0}\;\colint{3}{4}\Bigrr$};
		\draw(5*\x,1*\y) node[scale=\s](n6){$\Bigll\colint{2}{0}\;\colint{3}{1}\Bigrr$};
		\draw(6*\x,1*\y) node[scale=\s](n7){$\Bigll\colint{2}{0}\;\colint{3}{4}\Bigrr$};
		\draw(7*\x,1*\y) node[scale=\s](n8){$\Bigll\colint{1}{0}\;\colint{3}{1}\Bigrr$};
		\draw(8*\x,1*\y) node[scale=\s](n9){$\Bigll\colint{1}{0}\;\colint{3}{3}\Bigrr$};
		\draw(9*\x,1*\y) node[scale=\s](n10){$\Bigll\colint{2}{0}\;\colint{3}{2}\Bigrr$};
		\draw(10*\x,1*\y) node[scale=\s](n11){$\Bigll\colint{1}{0}\;\colint{2}{0}\Bigrr$};
		\draw(11*\x,1*\y) node[scale=\s](n12){$\Bigll\colint{2}{0}\;\colint{3}{3}\Bigrr$};
		\draw(12*\x,1*\y) node[scale=\s](n13){$\Bigll\colint{1}{0}\;\colint{3}{2}\Bigrr$};
		\draw(1*\x,2*\y) node[scale=\s](n14){$\Bigll\colint{1}{0}\;\colint{2}{0}\;\colint{3}{0}\Bigrr$};
		\draw(2*\x,2*\y) node[scale=\s](n15){$\Bigll\colint{1}{0}\;\colint{3}{4}\;\colint{2}{4}\Bigrr$};
		\draw(3*\x,2*\y) node[scale=\s](n16){$\Bigl[\colint{1}{0}\Bigr]\Bigl[\colint{3}{0}\Bigr]_{4}$};
		\draw(4*\x,2*\y) node[scale=\s](n17){$\Bigll\colint{1}{0}\;\colint{3}{0}\;\colint{2}{4}\Bigrr$};
		\draw(5*\x,2*\y) node[scale=\s](n18){$\Bigll\colint{1}{0}\;\colint{2}{0}\;\colint{3}{4}\Bigrr$};
		\draw(6*\x,2*\y) node[scale=\s](n19){$\Bigll\colint{1}{0}\;\colint{2}{0}\;\colint{3}{1}\Bigrr$};
		\draw(7*\x,2*\y) node[scale=\s](n20){$\Bigll\colint{1}{0}\;\colint{3}{3}\;\colint{2}{4}\Bigrr$};
		\draw(8*\x,2*\y) node[scale=\s](n21){$\Bigll\colint{1}{0}\;\colint{3}{1}\;\colint{2}{4}\Bigrr$};
		\draw(9*\x,2*\y) node[scale=\s](n22){$\Bigll\colint{1}{0}\;\colint{2}{0}\;\colint{3}{3}\Bigrr$};
		\draw(10*\x,2*\y) node[scale=\s](n23){$\Bigl[\colint{2}{0}\Bigr]\Bigl[\colint{3}{0}\Bigr]_{4}$};
		\draw(11*\x,2*\y) node[scale=\s](n24){$\Bigll\colint{1}{0}\;\colint{2}{0}\;\colint{3}{2}\Bigrr$};
		\draw(12*\x,2*\y) node[scale=\s](n25){$\Bigll\colint{1}{0}\;\colint{3}{2}\;\colint{2}{4}\Bigrr$};
		\draw(6.5*\x,2.5*\y) node[scale=\s](n26){$\Bigl[\colint{1}{0}\;\colint{2}{0}\Bigr]\Bigl[\colint{3}{0}\Bigr]_{4}$};
		\draw(n1) -- (n2);
		\draw(n1) -- (n3);
		\draw(n1) -- (n4);
		\draw(n1) -- (n5);
		\draw(n1) -- (n6);
		\draw(n1) -- (n7);
		\draw(n1) -- (n8);
		\draw(n1) -- (n9);
		\draw(n1) -- (n10);
		\draw(n1) -- (n11);
		\draw(n1) -- (n12);
		\draw(n1) -- (n13);
		\draw(n2) -- (n14);
		\draw(n2) -- (n15);
		\draw(n2) -- (n23);
		\draw(n3) -- (n14);
		\draw(n3) -- (n16);
		\draw(n3) -- (n17);
		\draw(n4) -- (n15);
		\draw(n4) -- (n17);
		\draw(n4) -- (n20);
		\draw(n4) -- (n21);
		\draw(n4) -- (n25);
		\draw(n5) -- (n15);
		\draw(n5) -- (n16);
		\draw(n5) -- (n18);
		\draw(n6) -- (n17);
		\draw(n6) -- (n19);
		\draw(n6) -- (n23);
		\draw(n7) -- (n18);
		\draw(n7) -- (n20);
		\draw(n7) -- (n23);
		\draw(n8) -- (n16);
		\draw(n8) -- (n19);
		\draw(n8) -- (n21);
		\draw(n9) -- (n16);
		\draw(n9) -- (n20);
		\draw(n9) -- (n22);
		\draw(n10) -- (n21);
		\draw(n10) -- (n23);
		\draw(n10) -- (n24);
		\draw(n11) -- (n14);
		\draw(n11) -- (n18);
		\draw(n11) -- (n19);
		\draw(n11) -- (n22);
		\draw(n11) -- (n24);
		\draw(n12) -- (n22);
		\draw(n12) -- (n23);
		\draw(n12) -- (n25);
		\draw(n13) -- (n16);
		\draw(n13) -- (n24);
		\draw(n13) -- (n25);
		\draw(n14) -- (n26);
		\draw(n15) -- (n26);
		\draw(n16) -- (n26);
		\draw(n17) -- (n26);
		\draw(n18) -- (n26);
		\draw(n19) -- (n26);
		\draw(n20) -- (n26);
		\draw(n21) -- (n26);
		\draw(n22) -- (n26);
		\draw(n23) -- (n26);
		\draw(n24) -- (n26);
		\draw(n25) -- (n26);
		\begin{pgfonlayer}{background}
			\draw[rounded corners,orange!50!black,opacity=.4,cap=round,line width=5pt](n1) -- (n2);
			\draw[rounded corners,orange!50!black,opacity=.4,cap=round,line width=5pt](n1) -- (n7);
			\draw[rounded corners,orange!50!black,opacity=.4,cap=round,line width=5pt](n1) -- (n11);
			\draw[rounded corners,orange!50!black,opacity=.4,cap=round,line width=5pt](n2) -- (n14);
			\draw[rounded corners,orange!50!black,opacity=.4,cap=round,line width=5pt](n2) -- (n23);
			\draw[rounded corners,orange!50!black,opacity=.4,cap=round,line width=5pt](n7) -- (n18);
			\draw[rounded corners,orange!50!black,opacity=.4,cap=round,line width=5pt](n7) -- (n23);
			\draw[rounded corners,orange!50!black,opacity=.4,cap=round,line width=5pt](n11) -- (n14);
			\draw[rounded corners,orange!50!black,opacity=.4,cap=round,line width=5pt](n11) -- (n18);
			\draw[rounded corners,orange!50!black,opacity=.4,cap=round,line width=5pt](n14) -- (n26);
			\draw[rounded corners,orange!50!black,opacity=.4,cap=round,line width=5pt](n18) -- (n26);
			\draw[rounded corners,orange!50!black,opacity=.4,cap=round,line width=5pt](n23) -- (n26);
			\draw[rounded corners,green!50!black,opacity=.4,cap=round,line width=5pt](n3) -- (n17);
			\draw[rounded corners,green!50!black,opacity=.4,cap=round,line width=5pt](n4) -- (n15);
			\draw[rounded corners,green!50!black,opacity=.4,cap=round,line width=5pt](n5) -- (n16);
			\draw[rounded corners,green!50!black,opacity=.4,cap=round,line width=5pt](n6) -- (n19);
			\draw[rounded corners,green!50!black,opacity=.4,cap=round,line width=5pt](n8) -- (n21);
			\draw[rounded corners,green!50!black,opacity=.4,cap=round,line width=5pt](n9) -- (n20);
			\draw[rounded corners,green!50!black,opacity=.4,cap=round,line width=5pt](n10) -- (n24);
			\draw[rounded corners,green!50!black,opacity=.4,cap=round,line width=5pt](n12) -- (n22);
			\draw[rounded corners,green!50!black,opacity=.4,cap=round,line width=5pt](n13) -- (n25);
		\end{pgfonlayer}
	\end{tikzpicture}
	\caption{The lattice $\pnc{G(5,5,3)}{1}\Bigl(\Bigl[\colint{1}{0}\;\colint{2}{0}\Bigr]\Bigl[\colint{3}{0}\Bigr]_{4}\Bigr)$ again.  A symmetric Boolean decomposition coming from \eqref{eq:gddn_second_decomposition} is highlighted, where the single Boolean lattice of rank $2$ is colored in brown, while the remaining nine Boolean lattices of rank $1$ are colored in green.}
	\label{fig:g553_nc_chain_decomposition}
\end{amssidewaysfigure}

\section{The Remaining Cases}
	\label{sec:remaining_cases}
Now that we have completed the affirmative answer of Questions~\ref{qu:nc_sbd_scd} and \ref{qu:nc_ssp} for the infinite families of irreducible well-generated complex reflection groups, it remains to deal with the exceptional cases.  These could in principle be treated by computer, and we strongly believe that Question~\ref{qu:nc_sbd_scd} has an affirmative answer for these groups as well.  However, explicitly constructing such a decomposition is computationally hard.  In this section we provide some computational evidence that supports this belief.  In particular, we show that the consequences of the existence of a symmetric Boolean decomposition mentioned in Section~\ref{sec:posets}, namely the strong Sperner property (which affirmatively answers Question~\ref{qu:nc_ssp} for all well-generated complex reflection groups), rank-symmetry, rank-unimodality, and rank-$\gamma$-nonnegativity, hold in each noncrossing partition lattice.  

Before we accomplish that, we use another tool to show the existence of a symmetric chain decomposition for noncrossing partition lattices of small rank.  Let $\PP=(P,\leq)$ be a graded poset of rank $n$ with rank-generating polynomial $\RR_{\PP}(t)=r_{0}(\PP)+r_{1}(\PP)t+\cdots+r_{n}(\PP)t^n$, and let $R_{i}=\{x\in P\mid\text{rk}(x)=i\}$ for $i\in\{0,1,\ldots,n\}$.  If $L\subseteq R_{i}$, then we define its \alert{shade} to be $\nabla L=\{p\in R_{i+1}\mid x\lessdot p\;\text{for some}\;x\in L\}$.  We say that $\PP$ \alert{has the normalized matching property} if for every $i\in\{0,1,\ldots,n\}$, and every $L\subseteq R_{i}$ the following inequality is satisfied:
\begin{equation}\label{eq:normalized}
	\frac{\bigl\lvert\nabla L\bigr\rvert}{r_{i+1}(\PP)}\geq\frac{\bigl\lvert L\bigr\rvert}{r_{i}(\PP)}.
\end{equation}

\begin{theorem}[\cite{griggs77sufficient}]\label{thm:normalized_matching}
	If $\PP$ is a graded, rank-symmetric, and rank-unimodal poset that has the normalized matching property, then $\PP$ admits a symmetric chain decomposition.
\end{theorem}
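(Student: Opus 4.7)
The plan is to construct the symmetric chain decomposition by combining local Hall-type matchings on the cover bipartite graphs with a global symmetry-enforcing argument.

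First I would verify that the normalized matching property is self-dual. Given any $L' \subseteq R_{i+1}$, let $\Delta L' = \{p \in R_i \mid p \lessdot y \text{ for some } y \in L'\}$ and set $L = R_i \setminus \Delta L'$. By construction no element of $L$ is covered by an element of $L'$, so $\nabla L \subseteq R_{i+1} \setminus L'$. Feeding $L$ into \eqref{eq:normalized} and rearranging then yields the dual normalized matching inequality $|\Delta L'|/r_i(\PP) \geq |L'|/r_{i+1}(\PP)$.

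Next, combining the two forms of normalized matching with rank-unimodality supplies Hall's marriage condition on each consecutive pair of levels. For $i < \lfloor n/2 \rfloor$ the inequality $r_i(\PP) \leq r_{i+1}(\PP)$ combined with NMP gives $|\nabla L| \geq |L|$ for every $L \subseteq R_i$, so there is an injective matching $\mu_i : R_i \hookrightarrow R_{i+1}$ consisting of cover edges. Dually, for $i \geq \lceil n/2 \rceil$, the dual inequality produces an injective matching $\nu_i : R_{i+1} \hookrightarrow R_i$. Viewing $\nu_i^{-1}$ as a partial matching from $R_i$ to $R_{i+1}$, the union of the chosen cover edges already partitions $P$ into disjoint saturated chains, and rank-symmetry ensures that the number of chain starts at rank $k \leq n/2$, namely $r_k(\PP) - r_{k-1}(\PP)$, equals the number of chain ends at rank $n-k$.

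The main obstacle is that arbitrary choices of $\mu_i$ and $\nu_i$ need not produce \emph{symmetric} chains: a chain could begin at rank $k$ and terminate at a rank other than $n-k$. To enforce symmetry I would proceed by induction on the rank $n$, peeling off one symmetric saturated maximal chain at each step and verifying that the reduced subposet $\PP'$ retains all three hypotheses. Rank-symmetry and rank-unimodality persist automatically under the removal of a symmetric chain, since each rank number along the chain drops by exactly one in a symmetric pattern. The essential technical content is to show that the normalized matching property also survives the removal, and this is the step where the full strength of the hypothesis is used; it can be accomplished by selecting the chain to be removed via a max-flow / min-cut type argument so that the ratios $|\nabla L|/r_{i+1}(\PP')$ remain above $|L|/r_i(\PP')$ in the smaller poset. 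Iterating until $\PP$ is empty assembles the desired symmetric chain decomposition.
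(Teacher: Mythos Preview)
The paper does not prove this theorem; it is quoted from \cite{griggs77sufficient} without argument, so there is no ``paper's proof'' to compare against.  Your write-up therefore has to stand on its own, and it does not.

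Your Steps~1--3 are correct and standard: the self-duality of the normalized matching property, the derivation of Hall's condition from NMP together with rank-unimodality, and the observation that the concatenated chains have the right number of starts at rank $k$ and ends at rank $n-k$ but need not be individually symmetric.  The difficulty is entirely in Step~4, and there you have not given a proof.  Two concrete problems:
\begin{itemize}
\item The induction is mis-specified.  Removing a single maximal chain does not decrease the rank $n$, so ``induction on the rank $n$'' is not what you are doing; at best it is induction on $\lvert P\rvert$.
\item More seriously, the assertion that one can always peel off a maximal symmetric chain so that the remainder still satisfies NMP is exactly the hard part, and the phrase ``max-flow / min-cut type argument'' is not a proof.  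Removing an \emph{arbitrary} maximal chain certainly destroys NMP in general (already in tiny rank-$2$ examples with rank vector $(2,3,2)$ most maximal chains fail), so a genuine argument is needed to show that \emph{some} chain works at every stage.  You have not supplied one.
\end{itemize}

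Griggs' actual proof avoids this difficulty altogether: rather than removing chains, he exploits the freedom in the choice of the Hall matchings and builds them coherently (in effect, level by level with a nesting condition, so that the set of chain-starts at each level is carried forward consistently).  The normalized matching condition is used at each level to guarantee that a matching with the required nesting property exists, and the result is that the concatenated chains are symmetric by construction, with no inductive chain-removal needed.  If you want a complete argument, that is the route to follow; see Griggs' original paper or the expositions in Anderson's or Engel's textbooks.
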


We have the following result.

\begin{proposition}\label{prop:scd_small_ranks}
	If $W$ is an exceptional irreducible well-generated complex reflection group of rank $\leq 4$, then $\pnc{W}{1}$ admits a symmetric chain decomposition (except possibly for $W=G_{30}$).
\end{proposition}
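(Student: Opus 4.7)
The plan is to apply Theorem~\ref{thm:normalized_matching} to each exceptional irreducible well-generated complex reflection group $W$ of rank at most $4$. Since we already know from Section~\ref{sec:noncrossing_partitions} that $\pnc{W}{1}$ is graded, three properties remain to be established: rank-symmetry, rank-unimodality, and the normalized matching property \eqref{eq:normalized}. Once these are in place for a given $W$, Griggs's theorem immediately produces a symmetric chain decomposition of $\pnc{W}{1}$.

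For rank-symmetry and rank-unimodality, I would rely on the fact that the rank numbers of $\pnc{W}{1}$ are tabulated for all exceptional $W$ (for instance in \cite{armstrong09generalized}), and can also be recomputed directly with \textsc{Sage}. Rank-symmetry moreover follows abstractly from the (local) self-duality of $\pnc{W}{1}$ mentioned in Section~\ref{sec:noncrossing_partitions}. A direct inspection of these sequences verifies unimodality in each case, so both conditions are essentially bookkeeping.

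The real work is the normalized matching property. Rather than enumerating subsets $L\subseteq R_i$ naively, I would reformulate condition \eqref{eq:normalized} as a fractional bipartite matching on the cover graph between consecutive ranks $R_i$ and $R_{i+1}$: normalized matching is equivalent to the existence of an assignment of nonnegative weights to the edges of this bipartite graph in which every element of $R_i$ has total outgoing weight $1/r_i(\pnc{W}{1})$ and every element of $R_{i+1}$ has total incoming weight at most $1/r_{i+1}(\pnc{W}{1})$. This is a standard max-flow problem, which \textsc{Sage} can solve for each rank transition. Running this check for each exceptional $W$ of rank at most $4$, and combining the outcome with rank-symmetry and rank-unimodality, yields the desired symmetric chain decomposition via Theorem~\ref{thm:normalized_matching}.

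The main obstacle is computational rather than conceptual: for $W = G_{30}$ (that is, the Coxeter group $H_{4}$), the lattice $\pnc{W}{1}$ is large enough that the rank numbers at the middle levels make the max-flow verification infeasible in reasonable time, which is exactly why $G_{30}$ is excluded from the statement. For every other exceptional irreducible well-generated complex reflection group of rank at most $4$ the computation terminates successfully, confirming the normalized matching property and hence the existence of a symmetric chain decomposition.
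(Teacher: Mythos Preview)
Your proposal is correct and follows the paper's approach exactly: construct the lattices (the paper uses \textsc{Gap}) and verify in \textsc{Sage} that they are rank-symmetric, rank-unimodal, and have the normalized matching property, then invoke Theorem~\ref{thm:normalized_matching}. One small remark: since the rank vector of $\pnc{G_{30}}{1}$ is only $(1,60,158,60,1)$, a max-flow computation as you describe would actually be cheap, so the exclusion of $G_{30}$ is not explained by infeasibility of \emph{your} method---it more plausibly reflects the paper's particular implementation (which effectively enumerates subsets), or the possibility that the normalized matching property fails for $G_{30}$ even though a symmetric chain decomposition might still exist.
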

\begin{proof}
	This was verified using \textsc{Sage}~\cites{sage,sagecombinat} and \textsc{Gap}~\cite{michel15development}.  More precisely, \textsc{Gap} was used to create the noncrossing partition lattices, and \textsc{Sage} was used to verify that these lattices satisfy the assumptions of Theorem~\ref{thm:normalized_matching}.  
\end{proof}

%\begin{remark}
%	We could explicitly check the normalized matching property only for those noncrossing partition lattices associated with exceptional well-generated complex reflection groups of rank at most $3$.  In rank $4$, we applied the following strategy suggested by a referee.
%	
%	Observe that if two sets $X$ and $X'$ have the same shade, then it suffices to check \eqref{eq:normalized} for the larger of the two sets.  It follows immediately from the definition that noncrossing partition lattices are rank-symmetric.  Therefore, in rank $4$ we can avoid running through all sets of elements having rank $2$, and instead consider any set $X$ consisting of elements of rank $3$ and compute the maximal set $X'$ that has shade $X$, and check \eqref{eq:normalized} for these sets.  This was feasible for the lattices $\pnc{G_{28}}{1}$, $\pnc{G_{29}}{1}$, and $\pnc{G_{32}}{1}$.  
%	
%	The lattice $\pnc{G_{30}}{1}$ (which is isomorphic to the noncrossing partition lattice associated with the Coxeter group $H_{4}$), however, has $60$ atoms and $60$ coatoms, so that this strategy is not applicable. 
%\end{remark}

\begin{remark}
	A particularly rewarding answer to Question~\ref{qu:nc_sbd_scd} would be one that involves a uniform argument, \ie an argument that does not rely on the classification of irreducible well-generated complex reflection groups.  
	
	We observe that the strategies producing symmetric Boolean decompositions of the noncrossing partition lattices associated with each of the groups $G(1,1,n)$, $G(d,1,n)$, and $G(d,d,n)$ reviewed in this article, all rely on a decomposition of these lattices into parts that correspond to direct products of noncrossing partition lattices associated with parabolic subgroups of either of these groups.  If we try to do the same for $\pnc{G_{28}}{1}$ (which is isomorphic to the noncrossing partition lattice associated with the Coxeter group $F_{4}$), then this process fails.  In particular, if we remove a subposet from $\pnc{G_{28}}{1}$ that corresponds to a direct product of a $2$-chain and a noncrossing partition lattice associated with a maximal parabolic subgroup of $G_{28}$, then the resulting poset does not admit a symmetric chain decomposition anymore.  
	
	Therefore, a potential uniform proof answering Question~\ref{qu:nc_sbd_scd} affirmatively needs to pursue a completely different approach.  One possible approach might be a uniform definition of an R*S-labeling of these posets, since \cite{hersh99deformation}*{Theorem~5} implies that each graded poset with such a labeling admits a symmetric Boolean decomposition.
\end{remark}

While it may not be feasible to run through all subsets of every single rank of a poset, there exist fast algorithms to compute the maximum size of an antichain (or the \alert{width}) of a given poset, and it is thus possible to check by computer if a poset is Sperner.  In what follows we describe a strategy to reduce the question whether a poset is strongly Sperner to successively checking whether certain subposets are Sperner.  

Let $\PP=(P,\leq)$ be a graded poset of rank $n$ with rank-generating polynomial $\RR_{\PP}(t)=r_{0}(\PP)+r_{1}(\PP)t+\cdots+r_{n}(\PP)t^{n}$.  There is certainly some $s\in\{0,1,\ldots,n\}$ such that $r_{j}(\PP)\leq r_{s}(\PP)$ for all $j\in\{0,1,\ldots,n\}$.  (This $s$ need not be unique.)  Let $X=\{p\in P\mid\text{rk}(p)=s\}$, and define $\PP[1]=(P\setminus X,\leq)$.  Moreover, define $\PP[0]=\PP$ and $\PP[i]=\bigl(\ldots\bigl(\bigl(\PP\underbrace{[1]\bigr)[1]\bigr)\ldots\bigr)[1]}_{i}$.  In other words, $\PP[i]$ is the poset that is created from $\PP$ by removing the $i$ largest ranks, and is consequently a graded poset in its own right.  Clearly, $\PP[s]$ is the empty poset for $s>n$.  

\begin{proposition}\label{prop:strongly_sperner_rank_removal}
	A graded poset $\PP$ of rank $n$ is strongly Sperner if and only if $\PP[i]$ is Sperner for each $i\in\{0,1,\ldots,n\}$.
\end{proposition}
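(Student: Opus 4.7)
The proposition is a biconditional, and I would handle the two directions separately.

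For the $(\Rightarrow)$ direction I would argue directly.  Suppose $\PP$ is strongly Sperner and fix $i\in\{0,1,\ldots,n\}$.  Let $X_1,X_2,\ldots,X_i$ be the ranks of $\PP$ that are successively removed in forming $\PP[i]$, so that $|X_j|=N_j$, where $N_j$ denotes the $j$-th largest rank number of $\PP$.  For any antichain $A$ of $\PP[i]$, the disjoint union $A\cup X_1\cup X_2\cup\cdots\cup X_i$ is a union of $i+1$ antichains of $\PP$, so any chain sitting inside it contains at most one element of each summand and hence at most $i+1$ elements.  This union is therefore an $(i+1)$-family in $\PP$, and the assumed $(i+1)$-Sperner property of $\PP$ gives $|A|+N_1+\cdots+N_i\leq N_1+\cdots+N_{i+1}$, whence $|A|\leq N_{i+1}$.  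Since $X_{i+1}$ itself realises an antichain of $\PP[i]$ of exactly this size, $\PP[i]$ is Sperner.

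For the $(\Leftarrow)$ direction I would proceed by induction on $k$, showing that $\PP$ is $k$-Sperner under the standing hypothesis that every $\PP[i]$ is Sperner.  The base $k=1$ holds because $\PP[0]=\PP$ is Sperner.  For the inductive step take a maximum $k$-family $F$.  The plan is to write $F=G\sqcup A$ where $G$ is a $(k-1)$-family of $\PP$ and $A$ is an antichain of $\PP$ contained in the subposet $\PP[k-1]$: the inductive hypothesis then bounds $|G|\leq N_1+\cdots+N_{k-1}$, the Sperner property of $\PP[k-1]$ bounds $|A|\leq N_k$, and summing yields the desired inequality $|F|\leq N_1+\cdots+N_k$ (the matching lower bound is trivial, since the top $k$ ranks by size always form a $k$-family).

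The main obstacle is producing such a decomposition.  The naive choice of $A$ as the antichain of top-depth elements of $F$ (the tops of $k$-element chains in $F$) forces $F\setminus A$ to be a $(k-1)$-family automatically, but $A$ may intersect the removed ranks $X_1,\ldots,X_{k-1}$.  I would attack this by a compression argument: show that $F$ may always be replaced by a maximum $k$-family containing the union $X_1\cup\cdots\cup X_{k-1}$ entirely -- which is admissible because that union is a maximum $(k-1)$-family by the inductive hypothesis -- and that, for such a top-aligned $F$, the complement $A:=F\setminus(X_1\cup\cdots\cup X_{k-1})$ is forced to lie in $\PP[k-1]$ and moreover to be an antichain there by the $k$-family condition combined with $X_1\cup\cdots\cup X_{k-1}\subseteq F$.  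The swap lemma underlying this compression -- trading any element of $F$ outside $X_1\cup\cdots\cup X_{k-1}$ for a missing element of those ranks while preserving both $|F|$ and the absence of a $(k+1)$-chain -- is the only nonroutine step; once it is in place, the induction closes and strong Sperner-ness follows.
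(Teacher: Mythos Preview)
Your forward direction is correct and is exactly the paper's argument, phrased directly rather than by contraposition: an antichain $A$ in $\PP[i]$ together with the $i$ removed ranks is an $(i+1)$-family in $\PP$, and the $(i+1)$-Sperner property bounds $|A|$.

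For the converse, the paper's proof is much terser than yours: it simply asserts that, since $\PP[k-1]$ is Sperner, ``a maximum-sized $k$-family in $\PP$ is for instance $R^{(0)}\uplus\cdots\uplus R^{(k-1)}$'' and concludes.  You have correctly noticed that this only \emph{exhibits} a $k$-family of size $N_{1}+\cdots+N_{k}$ without bounding an arbitrary $k$-family from above, and your compression plan is an attempt to supply that missing upper bound.  The gap in your write-up is that the swap lemma is left unproved, and it is not as innocent as you suggest: inserting a missing element $y\in X_{j}$ into a maximum $k$-family $F$ may create many $(k+1)$-chains through $y$ at once, and removing a single element of $F$ will in general kill only one of them, so a one-at-a-time exchange need not stay within the class of $k$-families.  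By contrast, the step \emph{after} compression is routine and could be stated more confidently: if $F\supseteq X_{1}\cup\cdots\cup X_{k-1}$ and $p<q$ both lie in $A=F\setminus(X_{1}\cup\cdots\cup X_{k-1})$, then any maximal chain of $\PP$ through $p$ and $q$ meets each of the $k-1$ removed ranks inside $F$, giving a chain of $k+1$ elements in $F$; so $A$ is forced to be an antichain in $\PP[k-1]$.  In short, your diagnosis of where the real work lies is accurate, but the swap lemma \emph{is} the whole difficulty, and it remains open in your proposal just as the corresponding step is passed over in the paper's own proof.
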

\begin{proof}
	First of all, we want to emphasize that for each $i$ the poset $\PP[i]$ is an induced subposet of $\PP$, so every antichain in $\PP[i]$ is also an antichain in $\PP$.  Moreover, for each $i\in\{0,1,\ldots,n\}$, let $s_{i}\in\{0,1,\ldots,n-i\}$ be such that $r_{j}(\PP[i])\leq r_{s_{i}}(\PP[i])$ for all $j\in\{0,1,\ldots,n-i\}$, and write $X^{(i)}=\bigl\{p\in P[i]\mid \rk(p)=s_{i}\bigr\}$, where $P[i]$ denotes the ground set of $\PP[i]$.  

	\smallskip

	Now suppose that $\PP$ is strongly Sperner, and there is a minimal index $i\leq n$ such that $\PP[i]$ is not Sperner.  Since $\PP$ is strongly Sperner, it follows that $i>0$.  Consequently, we can find an antichain $A$ in $\PP[i]$ whose size exceeds the largest rank size of $\PP[i]$.  It follows that $A\uplus R^{(i-1)}$ is a $2$-family in $\PP[i-1]$ whose size exceeds the sum of the two largest rank numbers.  It follows further that $A\uplus R^{(i-1)}\uplus R^{(i-2)}\uplus\cdots\uplus R^{(0)}$ is an $(i+1)$-family in $\PP[0]=\PP$ whose size exceeds the sum of the $i+1$ largest rank numbers, which contradicts the assumption that $\PP$ is strongly Sperner. 
	
	\smallskip
	
	Conversely, suppose that each of $\PP[0],\PP[1],\ldots,\PP[n]$ is Sperner.  Since $\PP[0]=\PP$ it follows that $\PP$ is $1$-Sperner.  Thus a maximum-sized antichain in $\PP$ is for instance $R^{(0)}$.  Since $\PP[1]$ is Sperner, a maximum-sized antichain in $\PP[1]$ is for instance $R^{(1)}$, or in other words, a maximum-sized $2$-family in $\PP$ is for instance $R^{(0)}\uplus R^{(1)}$.  It follows that $\PP$ is $2$-Sperner.  The same argument shows that for every $i\in[n]$ a maximum-sized $i$-family in $\PP$ is for instance $R^{(0)}\uplus R^{(1)}\uplus\cdots\uplus R^{(i-1)}$, and hence $\PP$ is $i$-Sperner.  By definition, it follows that $\PP$ is strongly Sperner.
\end{proof}

The weaker statement ``$\PP$ is $k$-Sperner if and only if $\PP[i]$ is Sperner for each $i\in\{0,1,\ldots,k\}$'' is not true, as for instance the example in Figure~\ref{fig:sperner_posets} shows.  If the poset in Figure~\ref{fig:sperner_posets_2} is denoted by $\PP$, then the poset in Figure~\ref{fig:sperner_posets_3} is $\PP[1]$.  We observe that $\PP$ is $1$-Sperner, but $\PP[1]$ is not.  Now we conclude the proof of Theorem~\ref{thm:nc_strongly_sperner}.

\begin{table}
	\begin{tabular}{c|c|c}
		$W$ & rank vector of $\pnc{W}{1}$ & $\gamma$-vector of $\pnc{W}{1}$ \\
		\hline
		$G_{4}$ & $(1,3,1)$ & $(1,1)$ \\
		$G_{5}$ & $(1,4,1)$ & $(1,2)$ \\
		$G_{6}$ & $(1,6,1)$ & $(1,4)$ \\
		$G_{8}$ & $(1,3,1)$ & $(1,1)$ \\
		$G_{9}$ & $(1,6,1)$ & $(1,4)$ \\
		$G_{10}$ & $(1,4,1)$ & $(1,2)$ \\
		$G_{14}$ & $(1,8,1)$ & $(1,6)$ \\
		$G_{16}$ & $(1,3,1)$ & $(1,1)$ \\
		$G_{17}$ & $(1,6,1)$ & $(1,4)$ \\
		$G_{18}$ & $(1,4,1)$ & $(1,2)$ \\
		$G_{20}$ & $(1,5,1)$ & $(1,3)$ \\
		$G_{21}$ & $(1,10,1)$ & $(1,8)$ \\
		$G_{23}$ & $(1,15,15,1)$ & $(1,12)$ \\
		$G_{24}$ & $(1,14,14,1)$ & $(1,11)$ \\
		$G_{25}$ & $(1,6,6,1)$ & $(1,3)$ \\
		$G_{26}$ & $(1,9,9,1)$ & $(1,6)$ \\
		$G_{27}$ & $(1,20,20,1)$ & $(1,17)$ \\
		$G_{28}$ & $(1,24,55,24,1)$ & $(1,20,9)$ \\
		$G_{29}$ & $(1,25,60,25,1))$ & $(1,21,12)$ \\
		$G_{30}$ & $(1,60,158,60,1)$ & $(1,56,40)$ \\
		$G_{32}$ & $(1,10,20,10,1)$ & $(1,6,2)$ \\
		$G_{33}$ & $(1,30,123,123,30,1)$ & $(1,25,38)$ \\
		$G_{34}$ & $(1,56,385,700,385,56,1)$ & $(1,50,170,40)$ \\
		$G_{35}$ & $(1,36,204,351,204,36,1)$ & $(1,30,69,13)$ \\
		$G_{36}$ & $(1,63,546,1470,1470,546,63,1)$ & $(1,56,245,140)$ \\
		$G_{37}$ & $(1,120,1540,6120,9518,6120,1540,120,1)$ & $(1,112,840,1024,120)$ \\
	\end{tabular}
	\caption{The rank vectors and $\gamma$-vectors of the noncrossing partition lattices associated with exceptional irreducible well-generated complex reflection groups.}
	\label{tab:exceptional_rank_numbers}
\end{table}

\begin{proof}[Proof of Theorem~\ref{thm:nc_strongly_sperner}]
	First suppose that $W$ is irreducible.  Propositions~\ref{prop:symmetric_chains_strongly_sperner} and~\ref{prop:symmetric_boolean_gamma_nonnegative} imply together with Corollary~\ref{cor:g11n_sbd}, \cite{hersh99deformation}*{Theorems~5~and~7}, and Theorem~\ref{thm:gddn_sbd} that $\pnc{W}{1}$ is strongly Sperner, rank-symmetric, rank-unimodal, rank-$\gamma$-nonnegative whenever $W$ is isomorphic to $G(1,1,n)$ for $n\geq 1$, to $G(d,1,n)$ for $d\geq 2,n\geq 1$, or to $G(d,d,n)$ for $d,n\geq 2$.  (Recall that $\pnc{G(d,1,n)}{1}$ is isomorphic to $\pnc{G(2,1,n)}{1}$ for each $d\geq 2$.)  The exceptional cases have been verified to be strongly Sperner by computer using Proposition~\ref{prop:strongly_sperner_rank_removal}, and their rank- and $\gamma$-vectors are listed in Table~\ref{tab:exceptional_rank_numbers}.
	
	If $W$ is reducible, then we have $W\cong W_{1}\times W_{2}\times\cdots\times W_{s}$ for some irreducible well-generated complex reflection groups $W_{i}$, and we simply define $\pnc{W}{1}=\pnc{W_{1}}{1}\times\pnc{W_{2}}{1}\times\cdots\times\pnc{W_{s}}{1}$.  The first part of this proof asserts that each of these factors is a strongly Sperner, rank-symmetric, and rank-unimodal lattice, and then for instance \cite{proctor80product}*{Theorem~3.2} implies that the same is true for $\pnc{W}{1}$.  The rank-$\gamma$-nonnegativity in this case follows from the fact that the rank-generating polynomial of a direct product of posets is the product of the rank-generating polynomials of its factors, and \cite{petersen15eulerian}*{Observation~4.1}, which states that the product of $\gamma$-nonnegative polynomials is again $\gamma$-nonnegative.
\end{proof}

The \textsc{Gap} script that was used to create the noncrossing partition lattices for the well-generated complex reflection groups, and to print them into a format that allows for an import to \textsc{Sage} can be found here: \url{http://homepage.univie.ac.at/henri.muehle/files/print_nc.gap}.  The corresponding files for the noncrossing partition lattices associated with irreducible real reflection groups of rank at most $8$ (except those associated with the dihedral groups, since they are trivial), for the noncrossing partition lattices associated with exceptional irreducible well-generated complex reflection groups, as well as for the noncrossing partition lattices associated with the groups $G(d,d,n)$ for $3\leq d\leq 8$ and $3\leq n\leq 6$, as well as $3\leq d\leq 6$ and $n=7$ can be found here: \url{http://homepage.univie.ac.at/henri.muehle/files/ncp.zip}.  The \textsc{Sage} script which converts the \textsc{Gap} output to a \textsc{Sage} poset object, and which provides functions for checking whether a poset has the normalized matching or the strong Sperner property can be found here: \url{http://homepage.univie.ac.at/henri.muehle/files/sperner.sage}.  

\section{Acknowledgements}
	\label{sec:acknowledgements}
I would like to thank the anonymous referees for many suggestions on how to improve the exposition of this paper, in particular for suggesting a strategy to extend Proposition~\ref{prop:scd_small_ranks}.  The initial version of this paper considered only symmetric chain decompositions of noncrossing partition lattices, and I am indebted to Patricia Hersh and Marko Thiel for asking for symmetric Boolean decompositions of these lattices.  

\appendix

\section{The Proofs of Lemmas~\ref{lem:empty_chunks} and \ref{lem:fifth_chunk}}
	\label{app:fixed_space}
One of the nice properties of the complex reflection group $G(d,d,n)$ is the fact that the absolute length of its noncrossing partitions has an equivalent formulation in terms of the codimension of the fixed space.  Recall that for $w\in G(d,d,n)$ the fixed space is defined by $\text{Fix}(w)=\bigl\{\mathbf{v}\in V\mid w\mathbf{v}=\mathbf{v}\bigr\}$.  We have the following result.

\begin{lemma}[\cite{bessis06non}*{Lemma~4.1(ii)}]\label{lem:gddn_absolute_length_fixed_codimension}
	Let $h$ be the largest degree of $G(d,d,n)$ and let $\gamma$ be a $e^{2i\pi/h}$-regular element.  For every $w\in\nc{G(d,d,n)}{1}(\gamma)$ we have $\ell_{T}(w)=n-\dim\text{Fix}(w)$.
\end{lemma}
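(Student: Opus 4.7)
The plan is to establish the two inequalities $\ell_{T}(w) \geq n - \dim\text{Fix}(w)$ and $\ell_{T}(w) \leq n - \dim\text{Fix}(w)$ separately: the first is universal in any complex reflection group, while the second exploits the hypothesis $w \leq_{T} \gamma$.

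For the lower bound, fix a reduced factorization $w = t_{1} t_{2} \cdots t_{k}$ with $k = \ell_{T}(w)$. Each reflection $t_{i}$ fixes a hyperplane $H_{t_{i}} \subset V$ of codimension $1$, and $w$ acts trivially on $\bigcap_{i=1}^{k} H_{t_{i}}$, so $\text{Fix}(w) \supseteq \bigcap_{i=1}^{k} H_{t_{i}}$. The right-hand side has codimension at most $k$, hence $n - \dim\text{Fix}(w) \leq k = \ell_{T}(w)$. This step uses nothing about $\gamma$.

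For the reverse inequality, I would invoke Proposition~\ref{prop:parabolic_coxeter_elements}: the hypothesis $w \leq_{T} \gamma$ means $w$ is a Coxeter element of some parabolic subgroup $W' \subseteq G(d,d,n)$, whose rank equals $\ell_{T}(w)$ (since a Coxeter element of a well-generated group of rank $r$ admits a reduced $T$-factorization of length exactly $r$). By Steinberg's fixed-point theorem for complex reflection groups, the pointwise stabilizer of $\text{Fix}(w)$ is itself a parabolic subgroup $W''$, of rank $n - \dim\text{Fix}(w)$; moreover $W' \subseteq W''$, since every reflection in a reduced expression for $w$ fixes $\text{Fix}(w)$. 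To conclude it would suffice to show $\text{Fix}(W') = \text{Fix}(w)$, which forces $W' = W''$ and hence $\ell_{T}(w) = n - \dim\text{Fix}(w)$.

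The principal obstacle is the inclusion $\text{Fix}(w) \subseteq \text{Fix}(W')$: one must verify that a Coxeter element of a well-generated complex reflection group fixes no vectors beyond those already fixed by the whole group in its reflection representation. For $G(d,d,n)$ this can be checked concretely from the generalized cycle decomposition described in Section~\ref{sec:setup}: a simultaneous cycle of length $r$ contributes $r - 1$ both to the absolute length and to the codimension of $\text{Fix}(w)$, while a balanced cycle of length $r$ contributes $r$ to both (the extra unit of codimension coming from the nontrivial root of unity on the diagonal entries). Summing these contributions over the cycles of a parabolic Coxeter element $w$ then yields the desired equality, completing the proof.
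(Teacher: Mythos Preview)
The paper does not prove this lemma; it is quoted from \cite{bessis06non}*{Lemma~4.1(ii)} and used as a black box throughout Appendix~\ref{app:fixed_space}, so there is no in-paper argument to compare yours against.

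On the merits of your proposal: the lower bound is standard and correct. For the upper bound, two cautions. First, invoking Proposition~\ref{prop:parabolic_coxeter_elements} may be circular, since Ripoll's characterization of parabolic Coxeter elements in well-generated groups itself draws on the Bessis--Corran framework in which this identity is established; you should trace the logical dependencies before relying on it. Second, and more concretely, your cycle-by-cycle bookkeeping in the final paragraph must genuinely use the hypothesis $w\leq_{T}\gamma$, because $\ell_{T}(w)=n-\dim\text{Fix}(w)$ is \emph{false} for general $w\in G(d,d,n)$ once $d\geq 3$: for instance $w=\Bigl[\colint{1}{0}\Bigr]\Bigl[\colint{2}{0}\Bigr]\Bigl[\colint{3}{0}\Bigr]\in G(3,3,3)$ has $\dim\text{Fix}(w)=0$ yet $\ell_{T}(w)=4$, since every reflection has determinant $-1$ while $\det(w)=1$. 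What makes your count go through for noncrossing $w$ is that, by Lemma~\ref{lem:gddn_parabolic_subgroups}, the balanced cycles of a parabolic Coxeter element occur only as a single pair of the shape in~\eqref{eq:gddn_coxeter_element}, and for that specific configuration the contributions to length and to codimension do agree. Once that restriction is made explicit, your direct computation is valid.
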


We use this connection to prove the following results.

\begin{lemma}\label{lem:fixed_space_first_lemma}
	Let $d,n\geq 2$, and let $\gamma\in G(d,d,n)$ be the Coxeter element defined in \eqref{eq:gddn_coxeter_element}.  We have $\Bigl[\colint{1}{0}\Bigr]_{s}\Bigl[\colint{a}{0}\Bigr]_{d-s}\leq_{T}\gamma$ if and only if $a=n$ and $s=1$.
\end{lemma}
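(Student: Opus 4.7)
My plan is to apply the fixed-space criterion (Lemma~\ref{lem:gddn_absolute_length_fixed_codimension}): since $\ell_{T}(\gamma)=n$, we have $w\leq_{T}\gamma$ iff $\ell_{T}(w)+\ell_{T}(w^{-1}\gamma)=n$, equivalently $\dim\text{Fix}(w^{-1}\gamma)=n-\ell_{T}(w)$. The first step is to identify $w=\Bigl[\colint{1}{0}\Bigr]_{s}\Bigl[\colint{a}{0}\Bigr]_{d-s}$ as the diagonal matrix with $\zeta_{d}^{s}$ in position $(1,1)$, $\zeta_{d}^{-s}$ in position $(a,a)$ (assuming $a\neq 1$ and $s\not\equiv 0\pmod{d}$, for otherwise $w$ is the identity), and $1$'s elsewhere.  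Its fixed space is the span of the basis vectors $e_{j}$ with $j\notin\{1,a\}$, so $\dim\text{Fix}(w)=n-2$ and $\ell_{T}(w)=2$.  Thus we must show that $\dim\text{Fix}(w^{-1}\gamma)=n-2$ if and only if $a=n$ and $s=1$.

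Next I would write $w^{-1}\gamma$ explicitly using the matrix form of $\gamma$ from~\eqref{eq:gddn_coxeter_matrix}: premultiplying $\gamma$ by $w^{-1}$ simply rescales row $1$ of $\gamma$ by $\zeta_{d}^{-s}$ and row $a$ by $\zeta_{d}^{s}$.  I would then split into two cases according to whether $a=n$.  If $a\neq n$, then $e_{n}$ is an eigenvector of $w^{-1}\gamma$ with eigenvalue $\zeta_{d}^{-1}\neq 1$, and the complementary block on $e_{1},\ldots,e_{n-1}$ is a weighted cyclic shift whose $(n-1)\th$ power equals $\zeta_{d}\cdot I$ (the scalars $\zeta_{d}^{s}$ and $\zeta_{d}^{1-s}$ picked up along the cycle multiply to $\zeta_{d}$).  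Since $\zeta_{d}\neq 1$, no eigenvalue of this block equals $1$, so $\dim\text{Fix}(w^{-1}\gamma)=0\neq n-2$, and $w\not\leq_{T}\gamma$ in this case.

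In the remaining case $a=n$, the matrix $w^{-1}\gamma$ splits as an $(n-1)\times(n-1)$ weighted cyclic block on $e_{1},\ldots,e_{n-1}$ whose $(n-1)\th$ power is $\zeta_{d}^{1-s}\cdot I$, together with a $1\times 1$ block on $e_{n}$ with scalar $\zeta_{d}^{d-1}\cdot\zeta_{d}^{s}=\zeta_{d}^{s-1}$.  Each block contributes a $1$-dimensional fixed subspace iff the relevant scalar equals $1$, so $\dim\text{Fix}(w^{-1}\gamma)=2$ precisely when $\zeta_{d}^{1-s}=1=\zeta_{d}^{s-1}$, that is, when $s\equiv 1\pmod{d}$.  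Given $1\leq s\leq d-1$, this forces $s=1$, completing the proof.  The main obstacle is bookkeeping: one has to correctly read off the matrix of $w^{-1}\gamma$ from the cycle notation and verify that the weighted cyclic shift has $1$ as an eigenvalue exactly when its overall scalar is $1$; beyond this the argument is a routine eigenvalue computation.
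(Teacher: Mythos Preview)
Your approach is exactly the paper's: both reduce to computing $\dim\text{Fix}(w^{-1}\gamma)$ via Lemma~\ref{lem:gddn_absolute_length_fixed_codimension} and split into the cases $a<n$ and $a=n$.  Your description of $w^{-1}\gamma$ as a weighted cyclic shift on $e_{1},\ldots,e_{n-1}$ (plus a $1\times 1$ block on $e_{n}$) is a clean repackaging of the explicit coordinate computation carried out in the paper, and your case analysis yields the same fixed-space dimensions.

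There is, however, a slip in your reduction step.  From $\ell_{T}(w)+\ell_{T}(w^{-1}\gamma)=n$ together with $\ell_{T}(w^{-1}\gamma)=n-\dim\text{Fix}(w^{-1}\gamma)$ one obtains
\[
	\dim\text{Fix}(w^{-1}\gamma)=\ell_{T}(w)=2,
\]
not $n-\ell_{T}(w)=n-2$ as you write.  Taken literally, the target $n-2$ would make your argument fail for every $n\neq 4$: in the case $a=n$ you (correctly) compute that $\dim\text{Fix}(w^{-1}\gamma)\in\{0,2\}$, so it would never equal $n-2$, contradicting the lemma.  Your final paragraph already uses the correct value~$2$ implicitly, so the computations and the conclusion are fine; simply replace ``$n-2$'' by ``$2$'' in the setup and the proof is complete and coincides with the paper's.
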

\begin{proof}
	Let $w=\Bigl[\colint{1}{0}\Bigr]_{s}\Bigl[\colint{a}{0}\Bigr]_{d-s}=\colref{1}{a}{0}\colref{1}{a}{s}$ for $1<a\leq n$ and $0\leq s<d$.  In view of \eqref{eq:absolute_order} and Lemma~\ref{lem:gddn_absolute_length_fixed_codimension} it suffices to show that $\dim\text{Fix}(w^{-1}\gamma)=2$ if and only if $a=n$ and $s=1$.
	
	Let $V=\mathbb{C}^{n}$ and pick $\mathbf{v}\in V$.  We can write $\mathbf{v}$ as a column vector, namely $\mathbf{v}=(v_{1},v_{2},\ldots,v_{n})^{\mathsf{T}}$, where ``$\mathsf{T}$'' denotes transposition of vectors.  Using the matrix representation of $\gamma$, see \eqref{eq:gddn_coxeter_matrix}, we obtain
	\begin{displaymath}
		\mathbf{v}' = \gamma\mathbf{v} = \bigl(\zeta_{d}v_{n-1},v_{1},v_{2},\ldots,v_{n-2},\zeta_{d}^{d-1}v_{n}\bigr)^{\mathsf{T}}.
	\end{displaymath}
	Recall that $w^{-1}=\colref{1}{a}{s}\colref{1}{a}{0}$, since the reflections in $G(d,d,n)$ are involutions.  
	
	If $a<n$, then we have 
	\begin{displaymath}
		\mathbf{v}'' = \colref{1}{a}{0}\mathbf{v}' = \bigl(v_{a-1},v_{1},v_{2},\ldots,v_{a-2},\zeta_{d}v_{n-1},v_{a},\ldots,v_{n-2},\zeta_{d}^{d-1}v_{n}\bigr)^{\mathsf{T}},
	\end{displaymath}
	and thus
	\begin{displaymath}
		\colref{1}{a}{s}\mathbf{v}'' = \bigl(\zeta_{d}^{1-s}v_{n-1},v_{1},v_{2},\ldots,v_{a-2},\zeta_{d}^{s}v_{a-1},v_{a},\ldots,v_{n-2},\zeta_{d}^{d-1}v_{n}\bigr)^{\mathsf{T}}.
	\end{displaymath}
	It follows that $\text{Fix}(w^{-1}\gamma)$ is determined by the following system of linear equations.
	\begin{displaymath}\begin{aligned}
		& v_{1} = \zeta_{d}^{1-s}v_{n-1}, && v_{2} = v_{1}, && v_{3} = v_{2}, && \ldots, && v_{a-1} = v_{a-2}, &&\\
		& v_{a} = \zeta_{d}^{s}v_{a-1}, && v_{a+1} = v_{a}, && a_{a+2} = v_{a+1}, && \ldots, && v_{n-1} = v_{n-2}, && v_{n} = \zeta_{d}^{d-1}v_{n}.
	\end{aligned}\end{displaymath}
	If we put these equations together, we obtain
	\begin{align*}
		\zeta_{d}^{1-s}v_{n-1} & = v_{1} = v_{2} = \cdots = v_{a-1} = \zeta_{d}^{d-s}v_{a} = \zeta_{d}^{d-s}v_{a+1} = \cdots = \zeta_{d}^{d-s}v_{n-1},\\
		\zeta_{d}^{d-1}v_{n} & = v_{n}.
	\end{align*}
	Since $d>1$, it follows that there do not exist nontrivial solutions for these equalities to hold, and hence $\dim\text{Fix}(w^{-1}\gamma)=0$, which implies that in this case $w\not\leq_{T}\gamma$.  

	If $a=n$, then we have	
	\begin{displaymath}
		\mathbf{v}'' = \colref{1}{n}{0}\mathbf{v}' = \bigl(\zeta_{d}^{d-1}v_{n},v_{1},v_{2},\ldots,v_{n-2},\zeta_{d}v_{n-1}\bigr)^{\mathsf{T}},
	\end{displaymath}
	and thus 
	\begin{displaymath}
		\colref{1}{n}{s}\mathbf{v}'' = \bigl(\zeta_{d}^{1-s}v_{n-1},v_{1},v_{2},\ldots,v_{n-2},\zeta_{d}^{s-1}v_{n}\bigr)^{\mathsf{T}},
	\end{displaymath}
	As before, we obtain 
	\begin{align*}
		\zeta_{d}^{1-s}v_{n-1} & = v_{1} = v_{2} = \cdots = v_{n-1},\\
		\zeta_{d}^{s-1}v_{n} & = v_{n}.
	\end{align*}
	Hence there exist nontrivial solutions for these equalities only if $s=1$, which implies that in this case $w\leq_{T}\gamma$ if and only if $s=1$.  This completes the proof.
\end{proof}

Now we can prove Lemma~\ref{lem:empty_chunks}.

\begin{proof}[Proof of Lemma~\ref{lem:empty_chunks}]
	First of all, let $i\in\{2,3,\ldots,n-1\}$ and $s'\in\{0,1,\ldots,d-1\}$.  By definition, every $x\in G(d,d,n)$ with $x\bigl(\colint{1}{0}\bigr)=\colint{i}{s'}$ satisfies $x=\colref{1}{i}{s'}x'$ for some $x'\in G(d,d,n)$ with $x'\bigl(\colint{1}{0}\bigr)=\colint{1}{0}$.  In view of \eqref{eq:absolute_order}, we conclude that $\colref{1}{i}{s'}\leq_{T}x$.  Hence $\colref{1}{i}{s'}\leq_{T}\gamma$ if and only if $x\in R_{i}^{(s')}$.  Lemma~\ref{lem:gddn_atoms} implies that this is the case only if $s'\in\{0,d-1\}$.  This settles the claim that $R_{i}^{(s')}$ is empty if and only if $1\leq s'<d-1$.  
	
	Now let $s\in\{0,1,\ldots,d-1\}$, and pick $x\in G(d,d,n)$ with $x\bigl(\colint{1}{0}\bigr)=\colint{1}{s}$.  If $s=0$, then we can for instance consider $x=\varepsilon$ to see that $R_{1}^{(0)}$ is not empty.  Hence let $s\neq 0$.  In that case, the cycle decomposition of $x$ contains a balanced cycle $w=\Bigl[\colint{1}{0}\Bigr]_{s}$, and we notice that $w$ itself is not an element of $G(d,d,n)$.  However, for every $1<a\leq n$ we find that 
	\begin{displaymath}
		\Bigl[\colint{1}{0}\Bigr]_{s}\Bigl[\colint{a}{0}\Bigr]_{d-s}=\colref{1}{a}{0}\colref{1}{a}{s}
	\end{displaymath}
	belongs to $G(d,d,n)$, and it is immediate that $\Bigl[\colint{1}{0}\Bigr]_{s}\Bigl[\colint{a}{0}\Bigr]_{d-s}\leq_{T}x$.  Again it follows that $\Bigl[\colint{1}{0}\Bigr]_{s}\Bigl[\colint{a}{0}\Bigr]_{d-s}\leq_{T}\gamma$ if and only if $x\in R_{1}^{(s)}$.  Lemma~\ref{lem:fixed_space_first_lemma} implies that this is only possible if $a=n$ and $s=1$.
\end{proof}

\begin{lemma}\label{lem:fixed_space_second_lemma}
	Let $d,n\geq 2$, and let $\gamma\in G(d,d,n)$ be the Coxeter element defined in \eqref{eq:gddn_coxeter_element}.  If $w$ is a coatom of $\pnc{G(d,d,n)}{1}(\gamma)$, and $\colref{1}{2}{d-1}\leq_{T}w$, then we have either
	\begin{displaymath}
		w = \Bigl[\colint{1}{0}\;\ldots\;\colint{a}{0}\;\colint{(b+1)}{0}\;\ldots\;\colint{(n-1)}{0}\Bigr]\Bigl[\colint{n}{0}\Bigr]_{d-1}\Bigll\colint{(a+1)}{0}\;\ldots\;\colint{b}{0}\Bigrr
	\end{displaymath}
	for $1<a<b<n$, or
	\begin{displaymath}
		w = \Bigll\colint{1}{0}\;\colint{n}{s}\;\colint{2}{d-1}\;\ldots\;\colint{(n-1)}{d-1}\Bigrr
	\end{displaymath}
	for $0\leq s<d$.  
\end{lemma}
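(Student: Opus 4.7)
The plan is to mirror the strategy of the proof of Lemma~\ref{lem:fixed_space_first_lemma}: use \eqref{eq:absolute_order} and Lemma~\ref{lem:gddn_absolute_length_fixed_codimension} to translate the hypothesis $\colref{1}{2}{d-1}\leq_{T}w$ into a condition on fixed spaces, and then run an explicit case analysis over the three families of coatoms listed in Lemma~\ref{lem:gddn_coatoms}. Since $\ell_{T}(w)=n-1$ and $\ell_{T}(\colref{1}{2}{d-1})=1$, the order relation is equivalent to $\dim\text{Fix}(\colref{1}{2}{d-1}\,w)=2$, which, given that $\dim\text{Fix}(w)=1$ for a coatom, is in turn equivalent to the single-line containment
\begin{displaymath}
	\text{Fix}(w)\subseteq\text{Fix}\bigl(\colref{1}{2}{d-1}\bigr) = \bigl\{(v_{1},v_{2},\ldots,v_{n})^{\mathsf{T}}\in\mathbb{C}^{n}\mid v_{2}=\zeta_{d}^{-1}v_{1}\bigr\}.
\end{displaymath}

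Thus it suffices, for each of the three families of coatoms from Lemma~\ref{lem:gddn_coatoms}, to exhibit an explicit generator of the line $\text{Fix}(w)$ and test the scalar equation $v_{2}=\zeta_{d}^{-1}v_{1}$. Generators are obtained by translating the cycle structure of $w$ into linear relations on a fixed vector: a balanced cycle $\Bigl[\colint{k_{1}}{0}\;\ldots\;\colint{k_{r}}{0}\Bigr]$ picks up a cumulative scalar of $\zeta_{d}$ around its orbit and therefore forces $v_{k_{1}}=\cdots=v_{k_{r}}=0$, the cycle $\Bigl[\colint{n}{0}\Bigr]_{d-1}$ forces $v_{n}=0$, and a simultaneous cycle $\Bigll\colint{k_{1}}{t_{1}}\;\ldots\;\colint{k_{r}}{t_{r}}\Bigrr$ contributes cumulative scalars $\zeta_{d}^{t_{j+1}-t_{j}}$ whose total product is $1$ and so yields a one-parameter family of fixed vectors parametrized by $v_{k_{1}}$.

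Carrying out this computation gives the following outcome. For the first family, $\text{Fix}(w)$ is spanned by $e_{a+1}+\cdots+e_{b}$, which lies in the hyperplane exactly when $a\geq 2$, producing the first form in the statement. For the second family, the generator satisfies $v_{1}=v_{2}\neq 0$ when $a\geq 2$ and $v_{1}\neq 0=v_{2}$ when $a=1$, so the equation $v_{2}=\zeta_{d}^{-1}v_{1}$ collapses the line to zero in every case and no coatom of this family appears. For the third family, the single simultaneous cycle yields $v_{2}=v_{1}$ when $a\geq 2$ (again collapsing the line) and $v_{2}=\zeta_{d}^{-1}v_{1}$ precisely when $a=1$, producing the second form. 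The main obstacle is purely bookkeeping: one must carefully propagate the scalar shifts $\zeta_{d}^{t_{j+1}-t_{j}}$ around each cycle and handle the degenerate small cases (such as $a=1$, where one portion of a cycle collapses to a single entry) separately, but once a generator of $\text{Fix}(w)$ is in hand the verification reduces to a single linear check.
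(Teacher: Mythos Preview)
Your proof is correct and follows essentially the same strategy as the paper: reduce $\colref{1}{2}{d-1}\leq_{T}w$ to a fixed-space condition and run the case analysis over the three families of coatoms from Lemma~\ref{lem:gddn_coatoms}.  The only difference is organizational: the paper computes $\text{Fix}\bigl(\colref{1}{2}{d-1}w\bigr)$ explicitly in each case by tracking a generic vector through the product, whereas you compute the line $\text{Fix}(w)$ directly from the cycle structure and then test the single linear condition $v_{2}=\zeta_{d}^{-1}v_{1}$.  Your route is a bit cleaner (one product fewer, and the degenerate sub-cases such as $a=1$ are more transparent), but the two arguments are doing the same work; the reformulation $\dim\text{Fix}(tw)=2\iff\text{Fix}(w)\subseteq\text{Fix}(t)$ is the standard fixed-space characterisation of $\leq_{T}$ for parabolic Coxeter elements and is implicit in the paper's own use of Lemma~\ref{lem:gddn_absolute_length_fixed_codimension}.
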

\begin{proof}
	Suppose that $w$ is a coatom of $\pnc{G(d,d,n)}{1}(\gamma)$.  Then $\ell_{T}(w)=n-1$, and proving that $\colref{1}{2}{d-1}\leq_{T}w$ amounts to showing that $\dim\text{Fix}\Bigl(\colref{1}{2}{d-1}w\Bigr)=2$.  Let $V=\mathbb{C}^{n}$ and pick $\mathbf{v}\in V$, which we again write as a column vector $\mathbf{v}=(v_{1},v_{2},\ldots,v_{n})^{\mathsf{T}}$.  In view of Lemma~\ref{lem:gddn_coatoms} there are three choices for $w$.
	
	(i) Let $w=\Bigl[\colint{1}{0}\;\ldots\;\colint{a}{0}\;\colint{(b+1)}{0}\;\ldots\;\colint{(n-1)}{0}\Bigr]\Bigl[\colint{n}{0}\Bigr]_{d-1}\Bigll\colint{(a+1)}{0}\;\ldots\;\colint{b}{0}\Bigrr$.  We thus have 
	\begin{displaymath}
		\mathbf{v}' = w\mathbf{v} = \bigl(\zeta_{d}v_{n-1},v_{1},v_{2},\ldots,v_{a-1},\underindex{a+1}{v_{b}},v_{a+1},\ldots,v_{b-1},\underindex{b+1}{v_{a}},v_{b+1},\ldots,v_{n-2},\zeta_{d}^{d-1}v_{n}\bigr)^{\mathsf{T}},
	\end{displaymath}
	and consequently
	\begin{multline*}
		\colref{1}{2}{d-1}\mathbf{v}' = \bigl(\zeta_{d}v_{1},v_{n-1},v_{2}\ldots,v_{a-1},\underindex{a+1}{v_{b}},v_{a+1},\ldots,v_{b-1},\underindex{b+1}{v_{a}},v_{b+1},\ldots,v_{n-2},\zeta_{d}^{d-1}v_{n}\bigr)^{\mathsf{T}}.
	\end{multline*}
	Hence $\text{Fix}\Bigl(\colref{1}{2}{d-1}w\Bigr)$ is determined by the equations
	\begin{align*}
		\zeta_{d}v_{1} & = v_{1},\\
		v_{n-1} & = v_{2} = v_{3} = \cdots = v_{a} = v_{b+1} = v_{b+2} = \cdots = v_{n-1},\\
		v_{b} & = v_{a+1} = \cdots = v_{b},\\
		\zeta_{d}^{d-1}v_{n} & = v_{n},
	\end{align*}
	which implies that it has dimension $2$ as desired.
	
	(ii) Let $w=\Bigll(\colint{1}{0}\;\ldots\;\colint{a}{0}\;\colint{(b+1)}{d-1}\;\ldots\;\colint{(n-1)}{d-1}\Bigrr\Bigl[\colint{(a+1)}{0}\;\ldots\;\colint{b}{0}\Bigr]\Bigl[\colint{n}{0}\Bigr]_{d-1}$.  We thus have
	\begin{displaymath}
		\mathbf{v}' = w\mathbf{v} = \bigl(\zeta_{d}v_{n-1},v_{1},v_{2},\ldots,v_{a-1},\underindex{a+1}{\zeta_{d}v_{b}},v_{a+1},\ldots,v_{b-1},\underindex{b+1}{\zeta_{d}^{d-1}v_{a}},v_{b+1},\ldots,v_{n-2},\zeta_{d}^{d-1}v_{n}\bigr)^{\mathsf{T}},
	\end{displaymath}
	and consequently
	\begin{displaymath}
		\colref{1}{2}{d-1}\mathbf{v}' = \bigl(\zeta_{d}v_{1},v_{n-1},v_{2},\ldots,v_{a-1},\underindex{a+1}{\zeta_{d}v_{b}},v_{a+1},\ldots,v_{b-1},\underindex{b+1}{\zeta_{d}^{d-1}v_{a}},v_{b+1},\ldots,v_{n-2},\zeta_{d}^{d-1}v_{n}\bigr)^{\mathsf{T}}.
	\end{displaymath}
	Hence $\text{Fix}\Bigl(\colref{1}{2}{d-1}w\Bigr)$ is determined by the equations
	\begin{align*}
		\zeta_{d}v_{1} & = v_{1},\\
		v_{n-1} & = v_{2} = \cdots = v_{a} = \zeta_{d}v_{b+1} = \cdots = \zeta_{d}v_{n-1},\\
		\zeta_{d}v_{b} & = v_{a+1} = \cdots = v_{b},\\
		\zeta_{d}^{d-1}v_{n} & = v_{n},
	\end{align*}
	which, since $d\geq 2$, implies that it has dimension $0$.  Hence $\colref{1}{2}{d-1}\not\leq_{T}w$ as desired.
	
	(iii) Let $w=\Bigll\colint{1}{0}\;\ldots\;\colint{a}{0}\;\colint{n}{s-1}\;\colint{(a+1)}{d-1}\;\ldots\;\colint{(n-1)}{d-1}\Bigrr$ for $0\leq s<d$.  We thus have
	\begin{displaymath}
		\mathbf{v}' = w\mathbf{v} = \bigl(\zeta_{d}v_{n-1},v_{1},v_{2},\ldots,v_{a-1},\underindex{a+1}{\zeta_{d}^{d-s}v_{n}},v_{a+1},\ldots,v_{n-2},\zeta_{d}^{s-1}v_{a}\bigr)^{\mathsf{T}}.
	\end{displaymath}
	If $a>1$, then we obtain
	\begin{displaymath}
		\colref{1}{2}{d-1}\mathbf{v}' = \bigl(\zeta_{d}v_{1},v_{n-1},v_{2}\ldots,v_{a-1},\underindex{a+1}{\zeta_{d}^{d-s}v_{n}},v_{a+1},\ldots,v_{n-2},\zeta_{d}^{s-1}v_{a}\bigr)^{\mathsf{T}}.
	\end{displaymath}
	Hence $\text{Fix}\Bigl(\colref{1}{2}{d-1}w\Bigr)$ is determined by the equations
	\begin{align*}
		\zeta_{d}v_{1} & = v_{1},\\
		v_{n-1} & = v_{2} = \cdots = v_{a} = \zeta_{d}^{1-s}v_{n} = \zeta_{d}v_{a+1} = \zeta_{d}v_{a+2} = \cdots = \zeta_{d}v_{n-1},
	\end{align*}
	which, since $d\geq 2$, implies that it has dimension $0$.  Hence $\colref{1}{2}{d-1}\not\leq_{T}w$ in this case.  If $a=1$, however, then we obtain
	\begin{displaymath}
		\colref{1}{2}{d-1}\mathbf{v}' = \bigl(\zeta_{d}^{1-s}v_{n},v_{n-1},v_{2},\ldots,v_{n-2},\zeta_{d}^{s-1}v_{1}\bigr)^{\mathsf{T}}.
	\end{displaymath}
	Hence $\text{Fix}\Bigl(\colref{1}{2}{d-1}w\Bigr)$ is determined by the equations
	\begin{align*}
		\zeta_{d}^{1-s}v_{n} & = v_{1} = \zeta_{d}^{1-s}v_{n},\\
		v_{n-1} & = v_{2} = \cdots = v_{n-1},
	\end{align*}
	which implies that it has dimension $2$ as desired.	
\end{proof}

Now we can prove Lemma~\ref{lem:fifth_chunk}.

\begin{proof}[Proof of Lemma~\ref{lem:fifth_chunk}]
		First consider the set $R_{1}^{(1)}$.  Lemma~\ref{lem:gddn_coatoms} implies that
	\begin{displaymath}
		\Bigl[\colint{1}{0}\Bigr]\Bigl[\colint{n}{0}\Bigr]_{d-1}\Bigll\colint{2}{0}\;\ldots\;\colint{(n-1)}{0}\Bigrr
	\end{displaymath}
	is the greatest element of $\RR_{1}^{(1)}$, and has length $n-1$.  Moreover, Lemma~\ref{lem:gddn_atoms} implies that no atom of $\pnc{G(d,d,n)}{1}(\gamma)$ belongs to $R_{1}^{(1)}$.  It follows from Lemma~\ref{lem:fixed_space_first_lemma} that $\Bigl[\colint{1}{0}\Bigr]\Bigl[\colint{n}{0}\Bigr]_{d-1}$ is the least element of $\RR_{1}^{(1)}$.  Hence $\RR_{1}^{(1)}$ is an interval, and Lemmas~\ref{lem:gddn_single_short_cycles} and \ref{lem:bottom_intervals} imply that $\RR_{1}^{(1)}\cong\pnc{G(1,1,n-2)}{1}$ as desired.
	
	\smallskip	

	Now consider the set $R_{2}^{(d-1)}$.  Lemma~\ref{lem:gddn_atoms} implies that $\colref{1}{2}{d-1}\in R_{2}^{(d-1)}$, and it is immediate that for every $x\in R_{2}^{(d-1)}$ we have $\colref{1}{2}{d-1}\leq_{T}x$.  Hence $\RR_{2}^{(d-1)}$ has a least element.  Lemma~\ref{lem:gddn_coatoms} implies that no coatom of $\pnc{G(d,d,n)}{1}(\gamma)$ belongs to $R_{2}^{(d-1)}$.  Lemma~\ref{lem:fixed_space_second_lemma} tells us that the coatoms above $\colref{1}{2}{d-1}$ are either of the form 
	\begin{displaymath}
		\Bigl[\colint{1}{0}\;\ldots\;\colint{a}{0}\;\colint{(b+1)}{0}\;\ldots\;\colint{(n-1)}{0}\Bigr]\Bigl[\colint{n}{0}\Bigr]_{d-1}\Bigll\colint{(a+1)}{0}\;\ldots\;\colint{b}{0}\Bigrr
	\end{displaymath}
	for $1<a<b<n$, or of the form
	\begin{displaymath}
		\Bigll\colint{1}{0}\;\colint{n}{s}\;\colint{2}{d-1}\;\ldots\;\colint{(n-1)}{d-1}\Bigrr
	\end{displaymath}
	for $0\leq s<d$.  It is immediate that there cannot be an element in $G(d,d,n)$ that sends $\colint{1}{0}$ to $\colint{2}{d-1}$ and is at the same time covered by a coatom of the first form, but there is a unique such element that is covered by a coatom of the second form, namely $\Bigll(\colint{1}{0}\;\colint{2}{d-1}\;\ldots\;\colint{(n-1)}{d-1}\Bigrr$.  Hence $\RR_{2}^{(d-1)}$ has a greatest element of length $n-2$.  Lemmas~\ref{lem:gddn_single_short_cycles} and \ref{lem:bottom_intervals} imply that $\RR_{2}^{(d-1)}$ is isomorphic to $\pnc{G(1,1,n-2)}{1}$ as desired.  
\end{proof}

\bibliography{../../literature}

\end{document}